\newtheorem{thm}{Theorem}[section]
\newtheorem{pro}[thm]{Proposition}
\newtheorem{lem}[thm]{Lemma}
\newtheorem{cor}[thm]{Corollary}
\theoremstyle{definition}
\newtheorem{exa}[thm]{Example}
\numberwithin{equation}{section}
   \title{The lattice of monomial clones on finite fields}
\author{Sebastian Kreinecker}
\address{Sebastian Kreinecker,
Institut f\"ur Algebra,
Johannes Kepler Universit\"at Linz,\\
Altenbergerstra{\ss}e 69,
4040 Linz,
Austria}
\email{\tt kreinecker@algebra.uni-linz.ac.at}
\thanks{Supported by the Austrian Science Fund (FWF): P29931}
\subjclass[2010]{08A40, 08A02}
\keywords{clone, set of functions, monomial, finite field, semi-affine algebra}
\date{\today}
\newcommand{\N}{\mathbb{N}}
\newcommand{\Z}{\mathbb{Z}}
\newcommand{\F}{\mathbb{F}}
\newcommand{\genMClo}[1]{\langle #1 \rangle}
\newcommand{\addCo}[1]{\mathcal{S}_q(#1)}
\newcommand{\monClones}[1]{\mathbb{M}_{#1}}
\newcommand{\ar}[1]{[#1]}
\newcommand{\genCloZ}[2]{\langle #1 \rangle_{\Z_{#2}}}
\newcommand{\maxInd}[1]{\operatorname{mI}(#1)}
\begin{document}
\bibliographystyle{amsplain}

\begin{abstract}
We investigate the lattice of clones that are generated by a set of functions that are induced on a finite field $\F$ by
monomials.
We study the atoms and coatoms of this lattice and
investigate whether this lattice contains infinite ascending chains, or infinite descending chains, or infinite antichains.
We give a connection between the lattice of these clones and semi-affine algebras.
Furthermore, we show that the sublattice of idempotent clones of this lattice is finite and
every idempotent monomial clone is principal.
\end{abstract}

\maketitle

\section{Introduction and Preliminaries}\label{sec:Intro}

Let $\N:= \{1,2,3, \ldots\}$, let $\N_0 := \N\cup\{0\}$,
let $A$ be a finite set, let $F\subseteq \bigcup \{A^{A^i}\mid i\in \N\}$ and let $n\in\N$.
We denote $F \cap A^{A^n}$ by $F^{\ar{n}}$.
Let $C \subseteq \bigcup \{A^{A^i}\mid i\in \N\}$.
$C$ is called a \emph{clone on $A$} if
it is closed under composition of functions,
i.e., if $n,m \in \N$, $f\in C^{\ar{n}}$, $g_1, \ldots, g_n \in C^{\ar{m}}$
then $f(g_1, \ldots, g_n) \in C^{\ar{m}}$,
and if $C$ contains the projections,
i.e., for $n,j\in\N$ with $j\leq n$,
$\pi_j^n\colon A^n \to A$, $\pi_j^n(x_1, \ldots, x_n):= x_j$ lies in $C$.
The characterization of clones on a two-element set by Emil Post \cite{P:PostLat}
was the beginning of the study of clones.
Already in the case of a three-element set,
there are uncountable many clones \cite{JM:EWFB}, and thus a full description seems to be hard.
Hence the investigation of clones led to the study of clones which contain, or which contain only, specific functions.
Results for clones containing only affine mappings are given in
\cite{S:COLO}, a full characterization for polynomial clones (clones containing all the constant functions) on $\Z_p \times \Z_p$ and on $\Z_{p^2}$ for any prime $p$ with the operation $+$ can be found in \cite{B:PCCM},
a description for polynomial clones on $\Z_{pq}$ with the operation $+$ for different primes $p$, $q$ is given in \cite{AM:PCOG},
and in \cite{M:PC} it is shown that the lattice of polynomial clones on a group of square-free order that contain the group operation is finite.
For general notes on clone theory we refer to \cite{PK:FUR} and \cite{S:CUA}.

In this paper we study clones of the following kind:
Let $\alpha(1), \alpha(2), \ldots \in \N_0$.
We call $g = \prod_{i\in \N} x_i^{\alpha(i)}$ a \emph{monomial}
if there is an $n \in \N$ such that for all $i\in \N$
with $i>n$ we have $\alpha(i) = 0$ and there is a $j\in \N$ such that $\alpha(j)\neq 0$.
Then we write $g$ also as $\prod_{i=1}^n x_i^{\alpha(i)}$.
We define $\maxInd{g}$ by $\max(\{i \in \N \mid \alpha(i)\neq 0 \})$ and call $\maxInd{g}$ the \emph{arity} of $g$.
Let $q=p^t$ for some prime $p$ and for some $t\in\N$, and let $\F_q := \operatorname{GF}(q)$.
Let $n\geq \maxInd{g}$.
We call the function 
$g^{\F_q^{n}}\colon \F_q^{n} \to \F_q$,
$g^{\F_q^{n}}(x_1, \ldots, x_{n}) := \prod_{i=1}^{n} x_i^{\alpha(i)}$
the \emph{$n$-ary induced function of $g$ on $\F_q$}.
We call a function which is induced by a monomial a \emph{monomial function}.
We say that $g^{\F_q^{n}}$ is \emph{idempotent} if $g^{\F_q^{n}}(x,\ldots,x) = x$ for all $x \in \F_q$, 
and call then $g$ an \emph{idempotent monomial}.
This holds if and only if $\sum_{i=1}^{\maxInd{g}} \alpha(i) \equiv_{q-1} 1$.
We call two monomials $m_1$ and $m_2$ \emph{equivalent} if 
the 
$\maxInd{m_1}$-ary induced function of $m_1$ on $\F_q$ is equal to
the $\maxInd{m_2}$-ary induced function of $m_2$ on $\F_q$.
Note that $m_1=\prod_{i=1}^n x_i^{\alpha(i)}$ is equivalent to $m_2 = \prod_{i=1}^n x_i^{\beta(i)}$
if and only if $\alpha(i)\equiv\beta(i)$ for all $1\leq i \leq n$,
where for $a,b\in\N_0$ we let $a\equiv b$ if and only if $a = b$ or $a,b>0$ and $a\equiv_{q-1} b$.
Now let $M$ be a set of monomials.
We call $M$ a \emph{monomial clone on $\F_q$}
if
$C := \{m^{\F_q^n} \mid m\in M, n\in\N, n\geq  \maxInd{m}\}$ is a clone on $\F_q$
and
$M$ is \emph{closed under equivalent monomials},
which means if $m\in M$ and some monomial $m'$ is equivalent to $m$, then $m'\in M$. 
We say that $M$ is \emph{idempotent} if all monomials of $M$ are idempotent.
For any set $M'$ of monomials, there is a least monomial clone $\genMClo{M'}$ containing $M'$,
\emph{the monomial clone on $\F_q$ generated by $M'$}.
In order to describe clones that contain only monomial functions,
we will  describe monomial clones.
Clones that contain only monomial functions have been studied in \cite{MP:MCLOCRES}, \cite{MP:TCCSM},  \cite{MP:MCSFF}, and \cite{MP:SPGMC},
where special clones (e.g.\ generated by unary or binary monomial functions) are characterized.
This was further developed in \cite{GKS:MC}
where the binary part of an idempotent monomial clone on $\F_q$ which is generated by one single binary idempotent monomial is fully described.

Let $\monClones{q}$ be the set of all monomial clones on $\F_q$.
Let $C, D$ be two monomial clones on $\F_q$.
Then we denote by $C\vee D$ the smallest monomial clone which contains $C$ and $D$,
and by $\cap$ we denote the intersection of sets.
Then $(\monClones{q}, \vee, \cap)$ is a lattice.
The smallest monomial clone $\Delta$ is the monomial clone generated by the monomial $x_1$
which induces the clone of projections,
and the largest monomial clone $\nabla$ is the monomial clone generated by $x_1x_2$,
which induces the clone generated by the field multiplication.
The set of idempotent monomial clones on $\F_q$ with $\vee$ and $\cap$ forms a sublattice of the lattice of monomial clones on $\F_q$.
Let $C$ be a monomial clone on $\F_q$ such that $C \neq \Delta$ and $C\neq \nabla$.
We call $C$ an \emph{atom} if for all monomial clones $D$ with
$D\subseteq C$ we have $D = \Delta$ or $D=C$.
We call $C$ a \emph{coatom} if for all monomial clones $D$ with
$C\subseteq D$ we have $D = \nabla$ or $D=C$.
Let $C, D$ be two monomial clones on $\F_q$.
If $C\subseteq D$ and $C\neq D$, we write $C \subset D$.
In this paper we investigate the general structure of the lattice of monomial clones on $\F_q$ for any prime power $q$:
In Section \ref{sec:FirstObs} we start our investigation on monomials and give techniques for generating monomials.
In Section \ref{sec:wholeLatSmq} we give a full description of the lattice
of monomial clones on $\F_q$ if $q \in \{2,3,4\}$.
In Section \ref{sec:conToSemi} we give a connection between monomial clones and semi-affine algebras (see \cite{S:CUA}).
In Section \ref{sec:TopBot} we  investigate the top and the bottom of the lattice of monomial clones on $\F_q$ for any prime power $q$.
In Theorem~\ref{thm:topIsoDivLat}
we see that at the top of the lattice of monomial clones on $\F_q$, there is an interval dually isomorphic to the divisor lattice of $q-1$.
Furthermore, we get a full description of the atoms in Corollary~\ref{cor:atoms} and of the coatoms in Theorem~\ref{thm:coatoms}.
In Section \ref{sec:InfChains} we prove that
$(\monClones{q}, \vee, \cap)$ is well-partially ordered (Theorem~\ref{thm:wellPO}), i.e.,\ there are no infinite antichains and no infinite descending chains of monomial clones on $\F_q$. Furthermore, we show in Theorem~\ref{thm:squFreeAscChain} that
infinite ascending chains of monomial clones on $\F_q$ exist if and only if $q-1$ is not square-free.
In Section \ref{sec:idMonCl} we show that the lattice of idempotent monomial clones on $\F_q$ is finite and that every idempotent monomial clone on $\F_q$
is \emph{principal}, i.e.\ singly generated.

\subsection{Notation for monomial clones}

Let $q$ be a prime power.
Let $C$ be a clone on $\F_q$ which contains only monomial functions, and let
$M$ be the monomial clone such that
$C = \{m^{\F_q^n} \mid m\in M, n\in\N,  n\geq  \maxInd{m}\}$.
Let $f\in C$ be induced by the monomial $m = x_{i_1}^{\alpha(1)}\cdots x_{i_n}^{\alpha(n)}$,
where $n \in \N$, $\alpha(1),\ldots, \alpha(n) \in \N$ and $i_1,\ldots,i_n \in\N$ with $i_1<i_2<\ldots <i_n$.
Then $m$ can be seen as $i_n$-ary function and
since $m$ has $n$ variables with exponents unequal to $0$,
we say that $m$ has \emph{width $n$}.
Since $x^q=x$ for all $x\in \F_q$ and $M$ is closed under equivalent monomials,
we have that $M$ contains all these monomials $x_{i_1}^{\beta(1)}\cdots x_{i_n}^{\beta(n)}$
where for all $j\leq i_n$ we have $\beta(j) \equiv_{q-1} \alpha(j)$ and $\beta(j)>0$.
By permuting the variables we have $x_{1}^{\alpha(1)}\cdots x_{n}^{\alpha(n)} \in M$.
We see that a monomial clone is uniquely determined by 
those monomials $\prod_{i=1}^n x_i^{\alpha(i)}$
with $n\in\N$
such that for all $i\leq n$, $\alpha(i) \in \{1,\ldots,q-1\}$.
Let $i, j \in \N$.
If we set a variable $x_i$ to a variable $x_j$ of a monomial,
we say that we \emph{identify the variable $x_i$ with $x_j$}.
Since $C$ is a clone and $M$ is closed under equivalence of monomials,
$M$ is closed under substitution of monomials and thus closed under identifying variables, since
a variable induces a projection.
Let $M'$ be a set of monomials.
Then $M'$ is a monomial clone on $\F_q$ if and only if 
$\{x_i \mid i\in \N\} \subseteq M'$,
$M'$ is closed under substitution of monomials and
$M'$ is closed under equivalent monomials.

\begin{exa}
 Let $q$ be a prime power.
 We have $\Delta = \genMClo{\{x_1\}}$ and $\nabla = \genMClo{\{x_1x_2\}}$.
 Let $C$ be a monomial clone on $\F_5$.
 If $m(x_1,x_2)=x_1^2x_2 \in C$, then permuting the variables yields $m_1(x_1,x_2,x_3) = m(x_2,x_3) = x_2^2x_3 \in C$.
 Then we have $m_2(x_1,x_2,x_3)=m(x_1, m_1(x_2,x_2,x_3)) = x_1^2x_2^2x_3 \in C$,
and therefore we obtain
$m_3(x_1,x_2,x_3,x_4)= m(x_1, m_2(x_2,x_3,x_4)) = x_1^2x_2^2x_3^2x_4 \in C$.
 By identifying all the variables with $x_1$ we get that $m_4(x_1) = m_3(x_1,x_1,x_1,x_1)=x_1^7 \in C$.
 Since $x_1^7$ induces the same function as $x_1^3$,
 we have $x_1^3 \in C$, and also $x_1^{11} \in C$.
\end{exa}

Since $x^q=x$ for all $x\in \F_q$, we will describe monomial clones on $\F_q$
by investigating the arithmetic properties modulo $q-1$ of the exponents of the monomials.

\subsection{Modulus Calculation}

 Let $q$ be a prime power.
 Since $x^q=x$ for all $x\in \F_q$, we calculate modulo $q-1$ in the exponents,
 but $x^0 \neq x^{q-1}$ for $x= 0$, and thus
 if an exponent $a$ of a monomial has the property that
 $a \equiv_{q-1} 0$ and $a> 0$,
 we reduce $a$ to $q-1$ and not to $0$.
 Hence, we define for $a\in \N_0$, its representative w.r.t. $\equiv$ by
 $$\overline{a}:= \begin{cases}
              q-1, &\textmd{ if } a \bmod q-1 = 0 \textmd{ and } a > 0,\\
              a \bmod q-1, &\textmd{ otherwise}.
             \end{cases}
$$
If $C$ is a monomial clone on $\F_q$,
we denote by $\overline{C}$
the set $\{\prod_{i=1}^n x_i^{\alpha(i)} \in C \mid  \forall i \leq n \colon \alpha(i) \in \{1,\ldots, q-1\}\}$.
As we already mentioned, $C$ is uniquely determined by $\overline{C}$.

\section{First observations and procedures for generating monomials}\label{sec:FirstObs}

Let $q$ be a prime power and let $C$ be a monomial clone on $\F_q$ fixed for the rest of the section.
We start with an example of a monomial clone on $\F_q$.

\begin{lem}\label{lem:firstExa}
Let $b$ be a divisor of $q-1$.
  The set $C':=\{\prod_{i=1}^n x_i^{\alpha(i)} \mid n\in\N, \forall i\leq n\colon \alpha(i)\in\N_0, \exists j\leq n\colon \alpha(j) \neq 0, \sum _{i=1}^n \alpha(i) \equiv_b 1 \}$
  is a monomial clone on $\F_q$.
\end{lem}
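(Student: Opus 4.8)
The plan is to apply the characterization of monomial clones from Section~\ref{sec:Intro}: a set $M'$ of monomials is a monomial clone on $\F_q$ if and only if $\{x_i \mid i\in\N\}\subseteq M'$, $M'$ is closed under substitution of monomials, and $M'$ is closed under equivalent monomials. First note that every element of $C'$ is a genuine monomial, by the clause $\exists j\le n\colon\alpha(j)\ne 0$ in its definition, so $C'$ is at least a set of monomials. The first condition is immediate, since the exponent sum of $x_i$ is $1\equiv_b 1$.

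For closure under substitution, I would take $m=\prod_{i=1}^n x_i^{\alpha(i)}\in C'$ and monomials $m_1,\dots,m_n\in C'$, written (after allowing zero exponents) as $m_j=\prod_{\ell=1}^k x_\ell^{\gamma_j(\ell)}$. The composite is $m(m_1,\dots,m_n)=\prod_{i=1}^n m_i^{\alpha(i)}=\prod_{\ell=1}^k x_\ell^{s(\ell)}$ with $s(\ell)=\sum_{i=1}^n\alpha(i)\gamma_i(\ell)$, so its exponent sum equals $\sum_{\ell=1}^k s(\ell)=\sum_{i=1}^n\alpha(i)\bigl(\sum_{\ell=1}^k\gamma_i(\ell)\bigr)$. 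For the indices $i$ with $\alpha(i)\ne 0$ the inner sum is $\equiv_b 1$ because $m_i\in C'$, while the indices with $\alpha(i)=0$ contribute the trivial factor $m_i^0$; hence the total is $\equiv_b\sum_{i=1}^n\alpha(i)\equiv_b 1$. It remains to check that this is a genuine monomial: choosing $j$ with $\alpha(j)\ne 0$ and $\ell$ with $\gamma_j(\ell)\ne 0$ gives $s(\ell)\ge\alpha(j)\gamma_j(\ell)>0$, since exponents only add and never cancel. Therefore $m(m_1,\dots,m_n)\in C'$. (Substituting a monomial into only some of the variables is a special case, putting the appropriate $x_\ell\in C'$ into the other slots.)

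For closure under equivalent monomials, I would let $m_1=\prod_{i=1}^n x_i^{\alpha(i)}\in C'$ and let $m_2=\prod_{i=1}^n x_i^{\beta(i)}$ be equivalent to $m_1$. By the description of equivalence in the Introduction, for every $i\le n$ either $\alpha(i)=\beta(i)$ or $\alpha(i),\beta(i)>0$ and $\alpha(i)\equiv_{q-1}\beta(i)$. Since $b$ divides $q-1$, in both cases $\alpha(i)\equiv_b\beta(i)$, so $\sum_{i=1}^n\beta(i)\equiv_b\sum_{i=1}^n\alpha(i)\equiv_b 1$; moreover if $\alpha(j)\ne 0$ then $\beta(j)\ne 0$, so $m_2$ is a genuine monomial. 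Hence $m_2\in C'$, and all three conditions are met, so $C'$ is a monomial clone on $\F_q$.

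The argument is essentially routine bookkeeping, so I do not expect a real obstacle. The two points that need a little attention are the identity expressing the exponent sum of a composite monomial as the inner product of the outer exponent vector with the vector of inner exponent sums, and the use of $b\mid q-1$ in the equivalence step: without this divisibility a congruence modulo $q-1$ would not pass to one modulo $b$, and closure under equivalent monomials would fail.
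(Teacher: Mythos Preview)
Your proof is correct and follows essentially the same approach as the paper's: verify the three conditions (contains the variables, closed under substitution, closed under equivalent monomials), compute the exponent sum of a composite as $\sum_i\alpha(i)\cdot(\text{inner sum of }m_i)\equiv_b\sum_i\alpha(i)\equiv_b 1$, and use $b\mid q-1$ for the equivalence step. You are in fact slightly more careful than the paper in explicitly checking that the composite has a nonzero exponent, but the argument is the same.
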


\begin{proof}
Obviously $\genMClo{\{x_1\}} \subseteq C'$.
Now we show that $C'$ is closed under substitution of monomials.
 Let $n\in\N$, let $m := \prod_{i=1}^n x_i^{\alpha(i)} \in C'$,
 and let $m_j := \prod_{i=1}^{n_j} x_i^{\beta_j(i)} \in C'$ for all $j\leq n$.
 Now we get
$m(m_1, \ldots, m_n) =
  \prod_{i=1}^n \prod_{i'=1}^{n_i} x_{i'}^{\alpha(i)\cdot \beta_i(i')}$,
 and we have
 that $\sum_{i=1}^n \sum_{i'=1}^{n_i} {\alpha(i)\cdot \beta_i(i')}\equiv_b \sum_{i=1}^n \alpha(i) \equiv_b 1$.
  For all $c,d \in \N$
 with $c\equiv_{q-1} d$ we have
 $c\equiv_b d$, since $b$ divides $q-1$, and thus
 if $\prod_{i=1}^n x_i^{\alpha(i)}\in C'$, then
 $\{\prod_{i=1}^n x_i^{\beta(i)}\mid \forall i\leq n\colon \overline{\beta(i)} = \overline{\alpha(i)}\} \subseteq C'$.
Hence, $C'$ is a monomial clone on $\F_q$.
\end{proof}

\begin{lem}\label{lem:cutQ1}
 Let  $\prod_{i \in I} x_i^{\alpha(i)} \in C$ with $\lvert I \rvert \geq 2$ and $\alpha(i) \neq 0$ for all $i\in I$, and let $D \subset I$ be
 such that $\sum_{i\in D} \alpha(i) \equiv_{q-1} 0$.
 Then $\prod_{i\in I\setminus D} x_i^{\alpha(i)} \in C$.
\end{lem}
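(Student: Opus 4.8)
The plan is to reduce the given monomial $\prod_{i\in I} x_i^{\alpha(i)}$ to $\prod_{i\in I\setminus D} x_i^{\alpha(i)}$ by collapsing all the variables indexed by $D$ onto a single one of the surviving variables, and then cleaning up the exponent that accumulates on that variable via closure under equivalent monomials. Since $D\subset I$ is a \emph{proper} subset, $I\setminus D\neq\0$; fix some $k\in I\setminus D$.

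First I would use that a monomial clone is closed under identifying variables, as recorded in Section~\ref{sec:Intro}. Identifying $x_i$ with $x_k$ for every $i\in D$ sends $\prod_{i\in I} x_i^{\alpha(i)}$ to
\[
  m' \ :=\ x_k^{\,\alpha(k)+\sum_{i\in D}\alpha(i)}\ \prod_{i\in I\setminus D,\ i\neq k} x_i^{\alpha(i)} \ \in\ C .
\]
(If $I\setminus D=\{k\}$ the trailing product is empty and $m'=x_k^{\,\alpha(k)+\sum_{i\in D}\alpha(i)}$; nothing below changes.)

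Next I would feed in the hypothesis $\sum_{i\in D}\alpha(i)\equiv_{q-1}0$. The exponent of $x_k$ in $m'$ then satisfies $\alpha(k)+\sum_{i\in D}\alpha(i)\equiv_{q-1}\alpha(k)$, and both numbers are strictly positive because $\alpha(k)\neq 0$; hence $\alpha(k)+\sum_{i\in D}\alpha(i)\equiv\alpha(k)$ in the sense of the relation $\equiv$ from Section~\ref{sec:Intro}. Every other exponent of $m'$ coincides verbatim with the corresponding exponent of $\prod_{i\in I\setminus D} x_i^{\alpha(i)}$, so $m'$ is equivalent to $\prod_{i\in I\setminus D} x_i^{\alpha(i)}$. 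Since $C$ is closed under equivalent monomials, $\prod_{i\in I\setminus D} x_i^{\alpha(i)}\in C$, as claimed.

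I do not anticipate a genuine obstacle. The two points that need attention are: ensuring $I\setminus D\neq\0$, so that a target variable $k$ exists and the claimed conclusion is actually a monomial — this is exactly what properness of $D\subset I$ buys us — and keeping $\equiv_{q-1}$ and $\equiv$ apart, i.e.\ checking that the accumulated exponent on $x_k$ stays strictly positive so that it may legitimately be reduced back to $\alpha(k)$ rather than accidentally collapsing to $0$.
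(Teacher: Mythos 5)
Your proof is correct and is essentially the same argument as the paper's: identify all the variables indexed by $D$ with a single surviving variable $x_k$, $k\in I\setminus D$, and then use $\overline{\alpha(k)+\sum_{i\in D}\alpha(i)}=\overline{\alpha(k)}$ together with closure under equivalent monomials. You simply spell out the two small side conditions (properness of $D$ gives a target variable; positivity keeps the $\equiv$-reduction from collapsing to $0$) that the paper leaves implicit.
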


\begin{proof}
 We identify all $x_i$ for $i\in D$ with an $x_j$ where $j \in I\setminus D$.
 The result holds, since $C$ is closed under identifying variables and
 $\overline{\alpha(j) + \sum_{i\in D} \alpha(i)} = \overline{\alpha(j)}$.
\end{proof}

\begin{exa}
 Let $D$ be a monomial clone on $\F_5$.
 If $x_1^3x_2^2x_3^2 \in D$, then $2+2 \equiv_4 0$ and thus by Lemma~\ref{lem:cutQ1} we have $x_1^3 \in D$.
\end{exa}

\begin{lem}\label{lem:q1Allq1}
 Let $t\in \N$, let $\alpha(1), \ldots, \alpha(t) \in \N$ and let $j\in\N$.
 $C$ contains the monomial $(\prod_{i=1}^t x_i^{\alpha(i)})(\prod_{i=t+1}^{t+j}x_i^{q-1})$ if and only if for all $n \in \N_0$ we have that $C$ contains
 $(\prod_{i=1}^t x_i^{\alpha(i)})(\prod_{i=t+1}^{t+n}x_i^{q-1})$.
\end{lem}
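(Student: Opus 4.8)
Set $m_n := \bigl(\prod_{i=1}^{t}x_i^{\alpha(i)}\bigr)\bigl(\prod_{i=t+1}^{t+n}x_i^{q-1}\bigr)$ for $n\in\N_0$, so that the asserted equivalence reads ``$m_j\in C$ if and only if $m_n\in C$ for all $n\in\N_0$''. The direction from right to left is immediate by taking $n=j$, so the plan is to prove the other direction by splitting it into the cases $n\le j$ and $n>j$. Note that $m_j$ has width $t+j\ge 2$, since $t,j\ge 1$.

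For $n\le j$ I would simply delete the superfluous $(q-1)$-exponents using Lemma~\ref{lem:cutQ1}: take $I=\{1,\dots,t+j\}$ and let $D$ be any set of $j-n$ of the indices $t+1,\dots,t+j$. Then $\sum_{i\in D}\alpha(i)=(j-n)(q-1)\equiv_{q-1}0$, and $D\subsetneq I$ because $I\setminus D\supseteq\{1,\dots,t\}\neq\0$, so Lemma~\ref{lem:cutQ1} yields $m_n=\prod_{i\in I\setminus D}x_i^{\alpha(i)}\in C$ (for $n=j$ this is vacuous).

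For $n>j$ the idea is to produce a width-$2$ monomial of $C$ and use it as a ``generator'' that appends one new $(q-1)$-exponent at a time. Since $C$ is closed under identifying variables and $m_j$ has width $\ge 2$, identifying all but two of the variables of $m_j$ with one of those two gives a monomial $p=x_1^{c}x_2^{d}\in C$ with $c,d\in\{1,\dots,q-1\}$ (every exponent involved is positive, so its reduction stays in $\{1,\dots,q-1\}$, and two distinct variables survive). Now suppose $m_n\in C$ for some $n\ge j$. Substituting $p$, on two fresh variables $x_{t+n},x_{t+n+1}$, for the variable $x_{t+n}$ of $m_n$ replaces the factor $x_{t+n}^{q-1}$ by $\bigl(x_{t+n}^{c}x_{t+n+1}^{d}\bigr)^{q-1}=x_{t+n}^{c(q-1)}x_{t+n+1}^{d(q-1)}$; since $c(q-1)$ and $d(q-1)$ are positive multiples of $q-1$, the resulting monomial is equivalent to $m_{n+1}$. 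As $C$ is closed under substitution of monomials and under equivalent monomials, $m_{n+1}\in C$, and induction on $n$ with base case $n=j$ completes this case.

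I expect the only genuine step to be the ``appending'' one: the point to get right is that raising to the $(q-1)$-st power collapses every positive exponent to $q-1$ modulo $q-1$, so a single width-$2$ monomial of $C$ already suffices, and such a monomial is guaranteed to exist precisely because the hypothesis $j\ge 1$ forces $m_j$ to have width at least $2$. Everything else is bookkeeping with the closure properties of monomial clones and with Lemma~\ref{lem:cutQ1}.
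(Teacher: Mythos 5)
Your proof is correct and takes essentially the same approach as the paper's: Lemma~\ref{lem:cutQ1} handles the cases $n\le j$, and an induction that composes to append one fresh $(q-1)$-exponent at a time handles $n>j$, the key reduction in both being that $\overline{b(q-1)}=q-1$ for all $b\in\N$. The only difference is the choice of appending gadget --- the paper first extracts $m_1=(\prod_{i=1}^t x_i^{\alpha(i)})x_{t+1}^{q-1}$ via Lemma~\ref{lem:cutQ1} and substitutes it into the last slot with most of its variables identified, whereas you identify variables of $m_j$ down to a width-$2$ monomial $x_1^{c}x_2^{d}$ and substitute that into the last slot of $m_n$, a marginally cleaner formulation of the same step.
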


\begin{proof}
 The ``if''-direction obviously holds.
 We show the ``only if''-direction.
 By Lemma~\ref{lem:cutQ1}, we have $\prod_{i=1}^t x_i^{\alpha(i)} \in C$.
 Now we proceed by induction on $n \in \N$.
 For $n=1$ we have by Lemma~\ref{lem:cutQ1} that $(\prod_{i=1}^t x_i^{\alpha(i)})x_{t+1}^{q-1}\in C$, since $j\in\N$.
 Now let $n\geq 2$.
 Let $m(x_1,\ldots, x_{t+1}) := (\prod_{i=1}^t x_i^{\alpha(i)})x_{t+1}^{q-1}$.
 By the induction hypothesis we have that
 $m'(x_1, \ldots, x_{t+n-1}) := (\prod_{i=1}^t x_i^{\alpha(i)})(\prod_{i=t+1}^{t+n-1}x_i^{q-1}) \in C$.
 Now we have
 \begin{align*}
 m'(&x_1, \ldots, x_{t+n-2},m(x_{t+n-1}, x_{t+n}, \ldots, x_{t+n}))
 \\  &= (\prod_{i=1}^t x_i^{\alpha(i)})(\prod_{i=t+1}^{t+n-2}x_i^{q-1}) x_{t+n-1}^{\alpha(1)\cdot(q-1)} x_{t+n}^{\left((\sum_{i=2}^t \alpha(i)) + (q-1)\right) \cdot (q-1)}\in C,
 \end{align*}
 since
 $C$ is a monomial clone.
 Hence $\prod_{i=1}^t x_i^{\alpha(i)}\prod_{i=t+1}^{t+n}x_i^{q-1} \in C$,
 since $\overline{b\cdot(q-1)} = q-1$ for all $b\in \N$.
 This finishes the induction step.
\end{proof}

\begin{exa}
 Let $D$ be a monomial clone on $\F_5$.
 If $D$ contains $x_1^2x_2^3x_3^4$,
 then we get by Lemma~\ref{lem:q1Allq1} that $x_1^2x_2^3x_3^4 \cdots x_{2+t}^4 \in D$ for all $t\in \N_0$.
\end{exa}

\begin{lem}\label{lem:getMonWithExp}
 Let $N, n\in\N$ with $n\leq N$ and let  $f\colon \{1,\ldots, N\} \to \N$ be injective.
 If $C$ contains $\prod_{i=1}^N x_{f(i)}^{\alpha(i)}$, then
 there exists $\gamma \in \N_0$ such that $C$ contains $x_1^{\alpha(1)}\cdots x_n^{\alpha(n)} x_{n+1}^{\gamma}$.
\end{lem}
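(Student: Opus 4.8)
The plan is to reduce everything to the two basic closure properties of monomial clones recorded in the preliminaries: closure under permuting variables and closure under identifying variables. First I would relabel: since a monomial clone is closed under permutation of variables and $f$ is injective, $\prod_{i=1}^N x_{f(i)}^{\alpha(i)}\in C$ yields $x_1^{\alpha(1)}\cdots x_N^{\alpha(N)}\in C$, so we may assume $f$ is the identity map on $\{1,\dots,N\}$ and only have to produce the desired monomial from $x_1^{\alpha(1)}\cdots x_N^{\alpha(N)}$.

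The main step is then a single substitution: identify each of the variables $x_{n+1},x_{n+2},\dots,x_N$ with $x_{n+1}$. Because $C$ is closed under identifying variables (a variable induces a projection, so this is a substitution of monomials), the monomial $x_1^{\alpha(1)}\cdots x_n^{\alpha(n)}x_{n+1}^{\gamma}$ with $\gamma:=\sum_{i=n+1}^N\alpha(i)$ lies in $C$, and $\gamma\in\N_0$. If $n=N$ the sum defining $\gamma$ is empty, so $\gamma=0$ and $x_1^{\alpha(1)}\cdots x_n^{\alpha(n)}x_{n+1}^{0}$ is just the (relabeled) original monomial, which is already in $C$; so that case is vacuous.

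I do not expect a genuine obstacle here: the lemma is pure bookkeeping on top of the closure properties, and the only care needed is in lining up the conventions. The points worth a sentence are that the exponents $\alpha(1),\dots,\alpha(n)$ are preserved literally (permuting and identifying variables never reduce exponents modulo $q-1$, so there is no need to pass to $\equiv$-representatives), that $\gamma$ is genuinely an element of $\N_0$, and that allowing $\gamma=0$ is harmless since $x_{n+1}^{0}$ contributes nothing — this is precisely why the statement cannot in general be sharpened to drop the tail variables outright, which would instead require $\sum_{i=n+1}^N\alpha(i)\equiv_{q-1}0$ together with an appeal to Lemma~\ref{lem:cutQ1}.
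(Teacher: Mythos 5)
Your proof is correct and takes essentially the same route as the paper: the paper performs the same substitution in a single step, replacing $x_{f(i)}$ by $x_i$ for $i\le n$ and by $x_{n+1}$ for $i>n$, whereas you split it into a relabeling step followed by collapsing the tail onto $x_{n+1}$. Your remarks on $\gamma=\sum_{i=n+1}^N\alpha(i)$, the empty case $n=N$, and the reason the tail cannot simply be dropped are all accurate and consistent with the intended proof.
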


\begin{proof}
 For all $i \in \{1, \ldots, n\}$ we substitute the variable $x_i$ for the variable $x_{f(i)}$,
and for all $i \in \{n+1, \ldots, N\}$ we substitute the variable $x_{n+1}$ for~$x_{f(i)}$.
\end{proof}

\begin{lem}\label{lem:combRule}
 Let $t \in \N_0$ and let $\alpha(1), \ldots, \alpha(t) \in \N$.
  If $C$ contains $x_1\prod_{i=1}^t x_{i+1}^{\alpha(i)}$,
  then for all $n\in\N_0$ we have $x_1 \prod_{j=0}^{n-1} \prod_{i=1}^{t}  x_{j\cdot t +i+1}^{\alpha(i)} \in C$.
\end{lem}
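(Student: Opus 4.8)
The plan is to prove this by induction on $n$, at each step prepending one more block of $t$ variables by means of a single substitution. For brevity write $m := x_1\prod_{i=1}^{t} x_{i+1}^{\alpha(i)}$, and for $n\in\N_0$ let
$$m_n := x_1 \prod_{j=0}^{n-1} \prod_{i=1}^{t}  x_{j\cdot t +i+1}^{\alpha(i)}$$
be the monomial we must show lies in $C$; thus $m_0 = x_1$, $m_1 = m$, and $m_n$ has width $nt+1$, its $n$ blocks of variables occupying the pairwise disjoint index ranges $\{jt+2,\ldots,jt+t+1\}$ for $j=0,\ldots,n-1$.

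For the base case $n=0$ we have $m_0 = x_1 \in C$, since $C$, being a monomial clone, contains every variable. For the induction step assume $m_n\in C$; by hypothesis we also have $m = m_1\in C$. Since $C$ is closed under substitution of monomials, the monomial
$$m\bigl(\,m_n(x_1,\ldots,x_{nt+1}),\ x_{nt+2},\ x_{nt+3},\ \ldots,\ x_{nt+t+1}\,\bigr)$$
again lies in $C$. Multiplying out, the first argument of $m$ enters to the first power and the remaining $t$ arguments enter with the exponents $\alpha(1),\ldots,\alpha(t)$, so this monomial equals $m_n\cdot\prod_{i=1}^{t}x_{nt+i+1}^{\alpha(i)}$. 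As $\prod_{i=1}^{t}x_{nt+i+1}^{\alpha(i)}$ is exactly the $j=n$ block in the definition of $m_{n+1}$, this product is $m_{n+1}$, whence $m_{n+1}\in C$ and the induction is complete.

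The only point requiring care is the index bookkeeping: one must verify that the substituted copy of $m_n$ uses the variables $x_1,\ldots,x_{nt+1}$ while the freshly introduced block uses the disjoint variables $x_{nt+2},\ldots,x_{nt+t+1}$, so that no variable collisions occur and the exponents combine precisely into $m_{n+1}$. The degenerate case $t=0$ is handled automatically by the same argument, since then $m=x_1$, every $m_n=x_1$, and all products over $i$ are empty.
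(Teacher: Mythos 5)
Your proof is correct and uses essentially the same idea as the paper's: induction on $n$, extending by one block of $t$ variables at each step through a single substitution into the exponent-1 slot. The only cosmetic difference is the direction of composition — the paper substitutes $m$ into the first slot of the induction-hypothesis monomial $m_{n-1}$, whereas you substitute $m_n$ into the first slot of $m$ — but since that slot has exponent $1$ in both cases, the two computations yield the same monomial and the bookkeeping is identical.
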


\begin{proof}
 Let $m(x_1,\ldots, x_{t+1}) := x_1\prod_{i=1}^t x_{i+1}^{\alpha(i)}$.
 We proceed by induction on $n$.
 The statement is true for $n=0$, since $x_1 \in C$ and for $n=1$, since $m \in C$.
 Let $n>1$.
  By the induction hypothesis, we have
  $m'(x_1,\ldots, x_{(n-1)t+1}):= x_1 \prod_{j=0}^{n-2} \prod_{i=1}^{t}  x_{j\cdot t +i+1}^{\alpha(i)} \in C$.
  Now we have
  \begin{align*}
  m''(x_1, \ldots, x_{n\cdot t+1})  &:= m'(m(x_1,x_{(n-1)t +1+1}, \ldots,  x_{n\cdot t+1}), x_2, \ldots,  x_{(n-1)t+1}) \\
   & = x_1(\prod_{i=1}^t x_{(n-1)t+i+1}^{\alpha(i)})\prod_{j=0}^{n-2}  \prod_{i=1}^{t} x_{j\cdot t +i+1}^{\alpha(i)}
    = x_1 \prod_{j=0}^{n-1} \prod_{i=1}^{t}  x_{j\cdot t +i+1}^{\alpha(i)},
  \end{align*}
  which lies in $C$, since $C$ is a monomial clone.
  This finishes the induction step.
\end{proof}

We will often use Lemma~\ref{lem:combRule} in the following context:

\begin{exa}
Let $k\in \N$.
 If $x_1 \cdots x_{1+k} \in C$,
 then we get by Lemma~\ref{lem:combRule} that for all $t\in \N$ we have $x_1\cdots x_{1+t\cdot k} \in C$.
\end{exa}

\begin{lem}\label{lem:2var1x1xq}
  If $C$ contains a monomial with two times the exponent $1$, then $x_1 \cdots x_q \in C$.
\end{lem}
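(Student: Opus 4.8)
The plan is to reduce the statement, by repeated application of the cutting and padding lemmas already proved, to the single clean monomial $x_1\cdots x_q$. Start from a monomial $m\in C$ that has at least two variables with exponent $1$; after permuting and relabeling variables we may write $m = x_1 x_2 \prod_{i=3}^{N} x_i^{\alpha(i)}$ for some $N\geq 2$ and $\alpha(3),\dots,\alpha(N)\in\N$. The first move is to get rid of all the variables carrying exponents different from $1$. By Lemma~\ref{lem:getMonWithExp}, applied with $n=2$, there is a $\gamma\in\N_0$ with $x_1 x_2 x_3^{\gamma}\in C$ (the remaining variables are collapsed onto $x_3$). If $\gamma=0$ or $\gamma\equiv_{q-1}0$ then in fact $x_1 x_2\in C$ (using Lemma~\ref{lem:cutQ1} in the latter case), so we may assume $\overline{\gamma}\in\{1,\dots,q-1\}$ and we are reduced to a three-variable monomial $x_1 x_2 x_3^{c}$ with $c:=\overline\gamma$, or even just $x_1 x_2$ itself.

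Next I would feed $x_1 x_2$ back into itself to grow the number of exponent-$1$ variables. If we already have $x_1 x_2\in C$, then Lemma~\ref{lem:combRule} with $t=1$, $\alpha(1)=1$ immediately gives $x_1 x_2\cdots x_{1+t}\in C$ for every $t\in\N$, and choosing $t=q-1$ yields $x_1\cdots x_q\in C$, finishing the proof. The remaining case is $x_1 x_2 x_3^{c}\in C$ with $1\le c\le q-1$ and we were unable to reduce it further. Here the idea is the same self-substitution: substitute the monomial $x_1 x_2 x_3^{c}$ into one of its own exponent-$1$ slots. Concretely, $m(m(x_1,x_2,x_3),x_4,x_5)=x_1 x_2 x_3^{c}x_4 x_5^{c}\in C$, and iterating $n$ times produces $x_1 x_2 x_3^{c}x_4 x_5^{c}\cdots$, i.e.\ a monomial with $n+1$ exponent-$1$ variables interleaved with $n$ exponent-$c$ variables. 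This is exactly the pattern handled by Lemma~\ref{lem:combRule} with $t=2$, $\alpha(1)=1$, $\alpha(2)=c$: from $x_1 x_2^{1}x_3^{c}\in C$ we obtain $x_1\prod_{j=0}^{n-1}\bigl(x_{2j+2}^{1}x_{2j+3}^{c}\bigr)\in C$ for all $n\in\N_0$.

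Now count exponent-$1$ variables: the monomial just produced has $n+1$ of them (the leading $x_1$ plus one in each block). Taking $n=q-1$ gives a monomial in $C$ with exactly $q$ exponent-$1$ variables, together with $q-1$ further variables each carrying exponent $c$. Finally invoke Lemma~\ref{lem:cutQ1}: the $q-1$ variables with exponent $c$ have exponent-sum $(q-1)c\equiv_{q-1}0$, so they can all be deleted, leaving precisely $x_1\cdots x_q\in C$. The main obstacle — and the only place where a little care is needed — is the bookkeeping in the three-variable case: one must be sure that the "interleaved" monomial $x_1 x_2 x_3^c x_4 x_5^c\cdots$ is genuinely of the form required by Lemma~\ref{lem:combRule} (it is, with period $t=2$), and that after the final cut no exponent-$1$ variable is accidentally removed (it is not, since Lemma~\ref{lem:cutQ1} lets us choose the deleted set $D$ to consist only of the exponent-$c$ indices, whose total is $\equiv_{q-1}0$). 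Everything else is a direct chain of the lemmas above.
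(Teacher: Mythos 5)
Your proof is correct and follows essentially the same route as the paper: reduce to $x_1 x_2 x_3^{\gamma}$ via Lemma~\ref{lem:getMonWithExp}, iterate that monomial using Lemma~\ref{lem:combRule} to produce $q$ exponent-$1$ variables alongside $q-1$ exponent-$\gamma$ variables, and then cut the latter via Lemma~\ref{lem:cutQ1}. The only cosmetic difference is that you apply Lemma~\ref{lem:cutQ1} to the $q-1$ separate exponent-$\gamma$ variables directly, whereas the paper first identifies them into a single variable carrying exponent $(q-1)\gamma$; you are also a bit more explicit about the $\gamma=0$ edge case, which the paper handles implicitly.
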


\begin{proof}
By Lemma~\ref{lem:getMonWithExp} we have $m= x_1 x_2 x_3^{\alpha}\in C$ for some $\alpha \in \N_0$.
Now we get by Lemma~\ref{lem:combRule} that
$x_1 (x_{2}x_{3}^{\alpha})(x_{2+2}x_{3+2}^{\alpha})  \cdots (x_{2+(q-2)2} x_{3+(q-2)2}^{\alpha}) \in C$.
By renaming the variables we get $x_1 x_2 \cdots x_{q} x_{q+1}^{\alpha} \cdots x_{2(q-1)+1}^{\alpha}\in C$.
By identifying the variables from $x_{q+2}$ to $x_{2(q-1)+1}$ with the variable $x_{q+1}$
we get 
$x_1 x_2 \cdots x_{q} x_{q+1}^{(q-1)\cdot \alpha}\in C$.
 The result follows now from Lemma~\ref{lem:cutQ1}.
\end{proof}

\begin{lem}\label{lem:oneGcdq1Is1}
 We assume that $C$ contains $x_1^{\alpha(1)}\cdots x_n^{\alpha(n)}$
 where $n\in\N$ with $n\geq 2$, $\gcd(\alpha(1),q-1) = 1$, $\alpha(1), \alpha(2)\in \N$,
 $\alpha(i) \in \N_0$ for $i>2$.
 Then there exists $\gamma \in \N_0$ such that $x_1 x_2^{\alpha(2)}\cdots x_n^{\alpha(n)}x_{n+1}^{\gamma} \in C$.
\end{lem}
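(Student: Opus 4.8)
The plan is to exploit that $\alpha(1)$ is invertible modulo $q-1$ in order to turn its exponent into $1$, by repeatedly substituting the given monomial into its own first variable.

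Write $m:=x_1^{\alpha(1)}x_2^{\alpha(2)}\cdots x_n^{\alpha(n)}\in C$ and put $S:=\alpha(2)+\cdots+\alpha(n)$. Since $\gcd(\alpha(1),q-1)=1$, there is a $d\in\N$ with $\alpha(1)^{d}\equiv_{q-1}1$ (for instance the multiplicative order of $\alpha(1)$ modulo $q-1$; for $q=2$ take $d=1$). First I would note that identifying the variables $x_2,\ldots,x_n$ of $m$ with one new variable $x_{n+1}$ gives $g:=x_1^{\alpha(1)}x_{n+1}^{S}\in C$, since $C$ is closed under identifying variables. Then I would prove by induction on $k\in\N$ that there is $\gamma_k\in\N_0$ with $m_k:=x_1^{\alpha(1)^{k}}x_2^{\alpha(2)}\cdots x_n^{\alpha(n)}x_{n+1}^{\gamma_k}\in C$, taking $\gamma_1:=0$ so that $m_1=m$. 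For the induction step, substitute $g$ for the variable $x_1$ in $m_k$; since $C$ is a monomial clone the resulting monomial lies in $C$, and it equals $x_1^{\alpha(1)^{k+1}}x_2^{\alpha(2)}\cdots x_n^{\alpha(n)}x_{n+1}^{S\alpha(1)^{k}+\gamma_k}$, because the factor $x_1^{\alpha(1)^{k}}$ turns into $g^{\alpha(1)^{k}}=x_1^{\alpha(1)^{k+1}}x_{n+1}^{S\alpha(1)^{k}}$ and this merges with the existing factor $x_{n+1}^{\gamma_k}$. So one may take $\gamma_{k+1}:=S\alpha(1)^{k}+\gamma_k$.

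Finally I would apply the claim with $k=d$: since $\alpha(1)^{d}>0$ and $\alpha(1)^{d}\equiv_{q-1}1$ we have $\overline{\alpha(1)^{d}}=1$, so $m_d$ is equivalent to $x_1x_2^{\alpha(2)}\cdots x_n^{\alpha(n)}x_{n+1}^{\gamma_d}$, which therefore belongs to $C$ as $C$ is closed under equivalent monomials; then $\gamma:=\gamma_d$ works. I do not expect a genuine obstacle, since this is essentially a cyclic-group computation. The only points requiring a little care are that $x_{n+1}$ must remain a single auxiliary variable through the whole induction — which is why $x_2,\ldots,x_n$ are identified in $m$ once at the outset, rather than by creating $n-1$ fresh variables at each step — and the final $\overline{\cdot}$-bookkeeping that replaces the exponent $\alpha(1)^{d}$ by $1$. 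The hypotheses $\alpha(1)\in\N$ and $\gcd(\alpha(1),q-1)=1$ are exactly what make the iteration terminate at exponent $1$; the conditions $\alpha(2)\in\N$ and $\alpha(i)\in\N_0$ for $i>2$ are not otherwise used.
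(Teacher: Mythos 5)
Your proof is correct and takes essentially the same approach as the paper: iterate a substitution that raises the exponent of $x_1$ to $\alpha(1)^k$ while accumulating the overflow in an auxiliary variable $x_{n+1}$, then invoke invertibility of $\alpha(1)$ modulo $q-1$ to reduce that exponent to $1$. The paper substitutes the accumulated monomial into the first variable of $m$ and uses $t=\phi(q-1)$, whereas you substitute the fixed binary monomial $g=x_1^{\alpha(1)}x_{n+1}^S$ into the accumulated monomial and use the multiplicative order; these are cosmetically different but mathematically the same argument.
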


\begin{proof}
If $\alpha(1)=1$ the result is obvious.
Now we assume that $\alpha(1)\neq 1$.
Let $m(x_1,\ldots x_n) := x_1^{\alpha(1)}\cdots x_n^{\alpha(n)} \in C$.
We show by induction that for all $t\in \N$,
there is a $\gamma \in \N_0$ such that
$x_1^{\alpha(1)^t}x_2^{\alpha(2)} \cdots x_n^{\alpha(n)}x_{n+1}^{\gamma} \in C$.
For $t=1$, we have $m\in C$.
Let $t>1$.
By the  induction hypothesis, we have
$m'(x_1, \ldots, x_{n+1}) = x_1^{\alpha(1)^{(t-1)}} x_2^{\alpha(2)}\cdots x_n^{\alpha(n)}x_{n+1}^{\gamma} \in C$
for some $\gamma \in C$.
Now we get
\begin{equation*}
 \begin{aligned}
  m''(x_1,\ldots, x_{n+1}) &:= m(m'(x_1, x_{n+1} \ldots, x_{n+1}),x_2, \ldots, x_{n}) \\
  &= x_1^{\alpha(1)^t}x_2^{\alpha(2)} \cdots x_n^{\alpha(n)}x_{n+1}^{\gamma'} \in C,
 \end{aligned}
\end{equation*}
 where $\gamma' = \alpha(1)\cdot (\gamma + \sum_{i=2}^k \alpha(i))$.
 This concludes the induction step.
 If $q-1 = 1$, let $t=1$ and then $\overline{\alpha(1)^t} = \overline{\alpha(1)} = 1$, since $\alpha(1) > 0$.
 If $q-1> 1$, then let $t=\phi(q-1)$, where $\phi$ denotes Euler's totient function.
 Then $\overline{\alpha(1)^t} = 1$, because $\gcd(\alpha(1), q-1) = 1$.
 Since $C$ is closed under equivalent monomials, we get $x_1x_2^{\alpha(2)} \cdots x_n^{\alpha(n)}x_{n+1}^{\gamma'} \in C$.
\end{proof}

\begin{lem}\label{lem:twoInvThenx1xq}
Let $n\in \N$ with $n\geq 2$ and let $\alpha(1), \ldots, \alpha(n) \in \N$
with $\gcd(\alpha(1),q-1) = \gcd(\alpha(2),q-1) = 1$.
    If $C$ contains $x_1^{\alpha(1)}x_2^{\alpha(2)}\cdots x_n^{\alpha(n)}$,
  then $x_1 \cdots x_q \in C$.
\end{lem}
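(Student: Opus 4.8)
The plan is to combine two of the reduction procedures already established. Lemma~\ref{lem:oneGcdq1Is1} allows us to replace an exponent that is coprime to $q-1$ by the exponent $1$, at the cost of adjoining a single new variable; and Lemma~\ref{lem:2var1x1xq} promotes any monomial of $C$ that carries the exponent $1$ at least twice to the monomial $x_1\cdots x_q$. Hence it suffices to produce, starting from $x_1^{\alpha(1)}x_2^{\alpha(2)}\cdots x_n^{\alpha(n)}\in C$, some monomial in $C$ in which two of the exponents equal $1$.

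First I would apply Lemma~\ref{lem:oneGcdq1Is1} to $x_1^{\alpha(1)}x_2^{\alpha(2)}\cdots x_n^{\alpha(n)}$; its hypotheses hold because $n\geq 2$, $\gcd(\alpha(1),q-1)=1$, $\alpha(1),\alpha(2)\in\N$, and $\alpha(i)\in\N\subseteq\N_0$ for $i>2$. This gives $x_1x_2^{\alpha(2)}x_3^{\alpha(3)}\cdots x_n^{\alpha(n)}x_{n+1}^{\gamma}\in C$ for some $\gamma\in\N_0$. The exponent of $x_1$ is now $1$, and I still need a second exponent equal to $1$; the natural candidate is $x_2$, whose exponent $\alpha(2)$ is still coprime to $q-1$.

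Second, using that $C$ is closed under permuting variables, I would rename the variables so that $x_2$ takes the role of the first variable of Lemma~\ref{lem:oneGcdq1Is1} and $x_1$ --- which now carries the positive exponent $1$ --- takes the role of its second (mandatory positive) variable, the remaining variables having exponents in $\N_0$. Applying the lemma once more, this time with $\gcd(\alpha(2),q-1)=1$, yields a monomial in $C$ in which the exponent of (old) $x_2$ has become $1$ while the exponent of (old) $x_1$ is kept equal to $1$, with at most one further variable adjoined. After renaming back, $C$ contains a monomial of the shape $x_1x_2x_3^{\alpha(3)}\cdots x_{n+1}^{\gamma}x_{n+2}^{\gamma'}$ with $\gamma'\in\N_0$, which carries the exponent $1$ twice. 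Lemma~\ref{lem:2var1x1xq} then gives $x_1\cdots x_q\in C$, as required.

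The only delicate point is the index bookkeeping in the second application of Lemma~\ref{lem:oneGcdq1Is1}: one must note that after the first reduction there really is a positive exponent available to serve as its mandatory variable ``$x_2$'' --- the exponent of $x_1$ is $1$ --- and that the extra variable introduced by each application is harmless, since it merely lengthens the monomial without disturbing the two exponents equal to $1$. There is no essential obstacle; in particular the degenerate case $q=2$ (so $q-1=1$) is already handled inside Lemma~\ref{lem:oneGcdq1Is1}.
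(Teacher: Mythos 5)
Your proposal is correct and follows essentially the same route as the paper's proof: one application of Lemma~\ref{lem:oneGcdq1Is1} to bring $\alpha(1)$ down to $1$, a variable permutation putting $\alpha(2)$ in front, a second application of Lemma~\ref{lem:oneGcdq1Is1}, and then Lemma~\ref{lem:2var1x1xq}. The only difference is cosmetic: the paper additionally invokes Lemma~\ref{lem:getMonWithExp} after each application to collapse the trailing variables into a single $x_3^\gamma$, whereas you carry them along unchanged, which is equally valid since Lemma~\ref{lem:2var1x1xq} needs only that some monomial in $C$ carry the exponent $1$ twice.
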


\begin{proof}
By Lemma~\ref{lem:oneGcdq1Is1} and Lemma~\ref{lem:getMonWithExp}
we get $x_1 x_2^{\alpha(2)}x_3^{\gamma} \in C$ for some exponent $\gamma \in \N_0$.
Since $C$ is a monomial clone, we
have $x_1^{\alpha(2)} x_2x_3^{\gamma} \in C$
and thus we get by Lemma~\ref{lem:oneGcdq1Is1} and Lemma~\ref{lem:getMonWithExp} that
$x_1x_2x_3^{\delta} \in C$ for some $\delta \in \N_0$.
 The result follows now from Lemma~\ref{lem:2var1x1xq}.
\end{proof}

\begin{lem}\label{lem:x1xkxathenx1xkqa}
  Let $k\in \N$ and let $\alpha \in \{1, \ldots, q\}$.
  We assume that $x_1\cdots x_k \in C$ and $x_1^{\alpha} \in C$.
 If $k > q-\alpha$, then $x_1\cdots x_{k-(q-\alpha)} \in C$.
\end{lem}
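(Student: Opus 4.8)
The plan is to iterate the single-step reduction $x_1 \cdots x_k \mapsto x_1 \cdots x_{k-(q-\alpha)}$ obtained by substituting the monomial $x_1^\alpha$ into one variable slot and then cleaning up the resulting extra exponents. First I would observe that since $\alpha \le q$, the exponent $\alpha$ satisfies $\overline{\alpha} = \alpha$ unless $\alpha = q$, in which case $x_1^\alpha$ and $x_1$ induce the same function, so I may as well assume $1 \le \alpha \le q-1$ (if $\alpha = q$ then $q - \alpha = 0$ and there is nothing to prove, or one just keeps $k$ unchanged). Now take $m(x_1,\dots,x_k) := x_1 \cdots x_k \in C$ and plug the monomial $x_1^\alpha$ into, say, the last $q - \alpha + 1$ variables with all of them identified: more precisely, consider
\[
m\bigl(x_1, \dots, x_{k-(q-\alpha)-1},\, x_{k-(q-\alpha)}^{\alpha},\, \text{(identify the rest)}\bigr).
\]
The cleaner way is to substitute $x_1^\alpha \in C$ into $m$ in the slots $x_{k-(q-\alpha)}, x_{k-(q-\alpha)+1}, \dots, x_k$ (that is $q-\alpha+1$ slots) all with the same variable $y$; since $C$ is a clone this composition lies in $C$, and it equals $x_1 \cdots x_{k-(q-\alpha)-1} \cdot y^{(q-\alpha+1)\alpha}$.

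The key arithmetic point is then that $(q-\alpha+1)\alpha \equiv_{q-1} \alpha^2 - \alpha(\alpha-1) = \alpha$ — wait, more carefully: $(q-\alpha+1)\alpha = q\alpha - \alpha^2 + \alpha \equiv_{q-1} \alpha - \alpha^2 + \alpha = 2\alpha - \alpha^2 \pmod{q-1}$, which is not obviously $1$. So that naive count is wrong, and the honest approach is the one-variable-at-a-time induction: substituting $x_1^\alpha$ into a single slot of $x_1 \cdots x_k$ produces $x_1 \cdots x_{k-1} x_k^{\alpha}$, and the real work is to absorb that lone exponent $\alpha$. The trick will be to use $x_1^\alpha \in C$ together with Lemma~\ref{lem:combRule}-style combination, or to iterate: repeatedly feeding $x_1^\alpha$ into the distinguished variable eventually turns the exponent into a power $\alpha^s$, and one wants $\alpha^s \equiv_{q-1} 1$ or to create enough exponent-$(q-1)$ variables to chop off via Lemma~\ref{lem:cutQ1}. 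In fact the cleanest route: from $x_1 \cdots x_k \in C$ substitute $x_1^\alpha$ into the last slot repeatedly to get $x_1\cdots x_{k-1}x_k^{\alpha^s}\in C$ for all $s$; separately, since $x_1^\alpha\in C$, identifying shows nothing new, but I can instead substitute $m' := x_1\cdots x_{k-1}x_k^{\alpha}$ into itself. The decisive move is: take $x_1 \cdots x_k \in C$, apply $x_1^\alpha$ to exactly one variable to get $w := x_1 \cdots x_{k-1} x_k^{\alpha} \in C$, then substitute $w$ into the $x_k$-slot of $x_1\cdots x_k$ but with a twist — replace $x_k$ by $w$ in its own $\alpha$-th power position — to raise the count. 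Honestly, the intended proof is almost certainly short: use $x_1^\alpha\in C$ and $x_1\cdots x_k\in C$ to form, by substituting $x_1^\alpha$ into one slot and using that $q-\alpha$ further variables can be introduced as exponent-$(q-1)$ factors (Lemma~\ref{lem:q1Allq1}) which then get cut (Lemma~\ref{lem:cutQ1}).

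So the precise steps I would carry out: (1) reduce to $1 \le \alpha \le q-1$. (2) From $x_1\cdots x_k\in C$, substitute $x_1^\alpha$ into one variable to obtain $x_1\cdots x_{k-1}x_k^{\alpha}\in C$. (3) Note $\alpha = (q-1) - (q-\alpha) + 1 \equiv q - (q-\alpha) \equiv_{q-1}$ ... more usefully, write $x_k^\alpha$ and observe $\alpha \equiv_{q-1} \alpha$, then use Lemma~\ref{lem:getMonWithExp} / Lemma~\ref{lem:q1Allq1} backwards: actually I would split the exponent as follows — since $x_1\cdots x_k\in C$ and we may introduce via Lemma~\ref{lem:q1Allq1} any number of exponent-$(q-1)$ variables once we have any exponent-$(q-1)$ variable, and combining with $x_1^\alpha$ lets us convert a bundle of $q-\alpha$ of the exponent-$1$ variables into exponent-$(q-1)$ variables (their exponents sum appropriately), then Lemma~\ref{lem:cutQ1} removes them. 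Concretely: identify $q-\alpha$ of the original variables $x_{k-(q-\alpha)+1},\dots,x_k$ together into a single variable $z$ and substitute $x_1^\alpha$ (wait, that gives $z^{q-\alpha}$, not helpful). The successful identity is: substitute $x_1^{\alpha}$ into the slots $x_{k-(q-\alpha)},\dots,x_k$ — that is $q-\alpha+1$ slots — each with its own fresh variable, giving $x_1\cdots x_{k-(q-\alpha)-1}\cdot x_{k-(q-\alpha)}^{\alpha}\cdots x_k^{\alpha}$, then identify those $q-\alpha+1$ new variables down to one variable $z$, yielding exponent $(q-\alpha+1)\alpha$ on $z$; since $\overline{(q-\alpha+1)\alpha}$ — compute: $(q-\alpha+1)\alpha = q\alpha-\alpha^2+\alpha\equiv_{q-1}\alpha-\alpha^2+\alpha=-\alpha^2+2\alpha=-( \alpha^2-2\alpha+1)+1=-(\alpha-1)^2+1\equiv_{q-1}1-(\alpha-1)^2$, which is $1$ only when $\alpha=1$. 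I expect the main obstacle is exactly this: getting the exponent bookkeeping to collapse to something removable by Lemma~\ref{lem:cutQ1}, and the right device is to first push $x_k$'s exponent up to a suitable power of $\alpha$ (iterating step (2)) so that it becomes $\equiv_{q-1}(q-1)$ or $1$, or to combine $x_1^\alpha$ with itself $q-\alpha$ times against $x_1 \cdots x_k$. Thus the crux step, and where I'd spend the real effort, is the arithmetic identity showing that after the appropriate substitution the surplus exponents sum to a multiple of $q-1$ so that Lemma~\ref{lem:cutQ1} applies and drops the variable count by exactly $q-\alpha$.
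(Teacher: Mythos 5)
There is a genuine gap: your proposal never actually lands on a valid substitution and instead openly defers the crux ("where I'd spend the real effort, is the arithmetic identity showing that ... the surplus exponents sum to a multiple of $q-1$"). The candidate computations you do carry out both fail, as you yourself notice: $(q-\alpha+1)\alpha \equiv_{q-1} 1-(\alpha-1)^2$ is not $1$ in general, and iterating $x_1^\alpha$ into a single slot only gives you $\alpha^s$, which is also not under control for arbitrary $\alpha$. So the write-up is a collection of discarded attempts plus an accurate description of what a proof would need to do, rather than a proof.

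The missing idea is that you should not try to manufacture a multiple of $q-1$ out of copies of $\alpha$ alone; you should split the exponent budget into \emph{three} pieces of different origins. Concretely: set $t := q-1-\alpha$ (using your valid reduction to $\alpha<q$). Identify $t$ of the trailing exponent-$1$ variables of $x_1\cdots x_k$ into a single fresh variable to obtain $x_1\cdots x_{k-t}\,x_{k-t+1}^{t}\in C$; this is legal because $k-(q-\alpha)>0$ guarantees $k-t\geq 1$. Next substitute $x_1^\alpha$ into the slot $x_{k-t}$ to get $x_1\cdots x_{k-t-1}\,x_{k-t}^{\alpha}\,x_{k-t+1}^{t}\in C$. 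Finally identify $x_{k-t}$ and $x_{k-t+1}$ with $x_{k-t-1}$: the exponent on that variable becomes $1+\alpha+t = 1+\alpha+(q-1-\alpha) = q$, and $\overline{q}=1$, so we obtain $x_1\cdots x_{k-t-1}\in C$ with $k-t-1 = k-(q-\alpha)$. The reason your attempts did not close is that they tried to force a clean residue out of a product of copies of $\alpha$, whereas the correct bookkeeping uses one copy of $\alpha$, $t=q-1-\alpha$ copies of $1$, and one more copy of $1$, which sum to exactly $q$ by design.
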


\begin{proof}
We assume that $\alpha < q$, otherwise the claim is trivial.
Let $t := q-1-\alpha$ and let
$m(x_1,\ldots, x_k) := x_1\cdots x_k$.
Since $k-(q-\alpha) > 0$,
we have that
$m'(x_1, \ldots, x_{k-t+1}):=m(x_1, \ldots, x_{k-t}, x_{k-t+1},\ldots, x_{k-t+1}) = x_1 \cdots x_{k-t} x_{k-t+1}^{t} \in C$.
 Since $x_1^{\alpha}\in C$, we get
 $m''(x_1, \ldots, x_{k-t+1}):= m'(x_1, \ldots, x_{k-t-1}, x_{k-t}^{\alpha}, x_{k-t+1})= x_1\cdots x_{k-t-1} x_{k-t}^{\alpha} x_{k-t+1}^{t} \in C$.
 Finally, we get $m''(x_1,\ldots,x_{k-t-1},x_{k-t-1},x_{k-t-1}) = x_1\cdots x_{k-t-1}^{1+t+\alpha} \in C$.
 Then $x_1\cdots x_{k-(q-\alpha)} \in C$, since $\overline{1+t+\alpha} = 1$ and $1+ t = q-\alpha$,
 and thus $k-t-1=k-(q-\alpha)$.
\end{proof}

As a special case we get the following corollary:

\begin{cor}\label{cor:x1xqxathenx1xa}
  We assume that $C$ contains $x_1\cdots x_q$.
 Let $\alpha \in \{1,\ldots, q\}$ such that $x_1^{\alpha} \in C$.
 Then $x_1\cdots x_{\alpha} \in C$.
\end{cor}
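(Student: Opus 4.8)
The plan is to obtain this as the direct specialization $k = q$ of Lemma~\ref{lem:x1xkxathenx1xkqa}. First I would record that the hypotheses of that lemma are in place: we are given $x_1 \cdots x_q \in C$ and $x_1^{\alpha} \in C$ with $\alpha \in \{1, \ldots, q\}$, so setting $k := q$ we have both $x_1 \cdots x_k \in C$ and $x_1^{\alpha} \in C$. It then remains only to verify the side condition $k > q - \alpha$; this holds because $\alpha \geq 1$ forces $q - \alpha \leq q - 1 < q = k$.

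Next I would invoke Lemma~\ref{lem:x1xkxathenx1xkqa} to conclude $x_1 \cdots x_{k - (q - \alpha)} \in C$, and then simplify the index: $k - (q - \alpha) = q - (q - \alpha) = \alpha$, so $x_1 \cdots x_{\alpha} \in C$, which is exactly the claim. One should note that the index $\alpha$ is a positive integer (since $1 \leq \alpha \leq q$), so the monomial $x_1 \cdots x_{\alpha}$ is well defined; in the boundary case $\alpha = q$ the conclusion just reproduces the hypothesis $x_1 \cdots x_q \in C$, so nothing goes wrong there either.

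Since the statement is an immediate instantiation of a lemma already proved, I do not anticipate any genuine obstacle; the only point requiring a line of care is the bookkeeping that the arithmetic identity $k - (q - \alpha) = \alpha$ and the inequality $k > q - \alpha$ both come out of the choice $k = q$ together with $\alpha \geq 1$.
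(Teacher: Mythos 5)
Your proof is correct and follows exactly the paper's own approach: specializing Lemma~\ref{lem:x1xkxathenx1xkqa} to $k=q$, checking $q>q-\alpha$ via $\alpha\geq 1$, and simplifying $q-(q-\alpha)=\alpha$. The only difference is that you spell out the side-condition check and the boundary case $\alpha=q$, which the paper leaves implicit.
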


\begin{proof}
By Lemma~\ref{lem:x1xkxathenx1xkqa} we have $x_1\cdots x_{q-(q-\alpha)} = x_1\cdots x_{\alpha} \in C$.
\end{proof}

\begin{lem}\label{lem:maxIdClone}
 $\genMClo{\{x_1 \cdots x_q\}}$ is the clone of all idempotent monomials over $\F_q$.
\end{lem}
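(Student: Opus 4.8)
The plan is to show a double inclusion between $\genMClo{\{x_1\cdots x_q\}}$ and the monomial clone $E$ of all idempotent monomials over $\F_q$. The inclusion $\genMClo{\{x_1\cdots x_q\}} \subseteq E$ is the easy direction: the monomial $x_1\cdots x_q$ is idempotent, since $q \equiv_{q-1} 1$, and $E$ is a monomial clone (closure under substitution and under equivalent monomials both preserve the defining congruence $\sum\alpha(i)\equiv_{q-1}1$, exactly as in the proof of Lemma~\ref{lem:firstExa} with $b=q-1$); hence $E$ contains the monomial clone generated by $x_1\cdots x_q$.

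For the reverse inclusion, fix an arbitrary idempotent monomial $m = \prod_{i=1}^{n} x_i^{\alpha(i)}$ with each $\alpha(i)\in\{1,\ldots,q-1\}$ and $\sum_{i=1}^{n}\alpha(i)\equiv_{q-1}1$; it suffices to show $m \in C := \genMClo{\{x_1\cdots x_q\}}$, since $C$ is closed under equivalent monomials and this covers all representatives. First I would produce in $C$ a long string of distinct variables each with exponent $1$: starting from $x_1\cdots x_q\in C$, Lemma~\ref{lem:combRule} (with $t=q-1$) gives $x_1\cdots x_{1+t(q-1)}\in C$ for every $t\in\N$, so $C$ contains $x_1\cdots x_L$ for arbitrarily large $L\equiv 1 \pmod{q-1}$. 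Now I would substitute, for each $i\in\{1,\ldots,n\}$, the block of $\alpha(i)$ consecutive variables in this string by a single new variable $y_i$: identifying $\alpha(i)$ variables together turns a product of $\alpha(i)$ exponent-$1$ variables into $y_i^{\alpha(i)}$. Choosing $L = \sum_{i=1}^{n}\alpha(i)$ — which is $\equiv_{q-1} 1$, hence of the required form $1+t(q-1)$ for a suitable $t$, and this is exactly where idempotence of $m$ is used — this substitution transforms $x_1\cdots x_L$ into $\prod_{i=1}^{n} y_i^{\alpha(i)}$, which is $m$ up to renaming variables. Therefore $m\in C$.

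The main obstacle is the bookkeeping for "$L$ of the required form": $L=\sum\alpha(i)$ need not literally equal $1+t(q-1)$ unless we know it is a positive integer $\equiv 1\pmod{q-1}$, which holds precisely because $m$ is idempotent and each $\alpha(i)\geq 1$ (so $L\geq n\geq 1$, and $L>0$). One should also treat the degenerate case $n=1$ separately: then $m=x_1^{\alpha(1)}$ with $\alpha(1)\equiv_{q-1}1$, so $\alpha(1)=1$ (as $\alpha(1)\in\{1,\ldots,q-1\}$) and $m=x_1\in C$ trivially; alternatively Corollary~\ref{cor:x1xqxathenx1xa} handles unary idempotent monomials directly once one observes $x_1^{q}\in C$ by identifying all variables in $x_1\cdots x_q$. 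With these edge cases dispatched, the substitution argument above is routine and completes the proof.
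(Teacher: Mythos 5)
Your proof is correct and follows essentially the same route as the paper: both generate $x_1\cdots x_{1+t(q-1)}$ from $x_1\cdots x_q$ via Lemma~\ref{lem:combRule} and then obtain an arbitrary idempotent monomial by identifying blocks of variables, with the reverse inclusion handled by observing that $x_1\cdots x_q$ is itself idempotent. Your version simply spells out the bookkeeping (choice of $L=\sum\alpha(i)$ and the $n=1$ edge case) that the paper leaves implicit.
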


\begin{proof}
We have $x_1 \cdots x_q = x_1 \prod_{i=1}^{q-1}x_{i+1}$.
 By Lemma~\ref{lem:combRule} we have for each $n\in\N$ that
 $$x_1\cdots x_q x_{q+1} \cdots x_{q + n(q-1)} \in  \genMClo{\{x_1 \cdots x_q\}},$$
 and thus we can generate all monomials with total degree $1$ modulo $q-1$  by identifying and permuting variables.
 Hence,  $\genMClo{\{x_1 \cdots x_q\}}$ contains all idempotent monomials over $\F_q$.
 On the other hand, the clone of all idempotent monomial contains $x_1\cdots x_q$.
\end{proof}

More generally:

\begin{lem}\label{lem:x1xkThenAllK}
 Let $k\in \N\setminus\{1\}$.
 Then $\genMClo{\{x_1 \cdots x_{k}\}}$ consists of all monomials $x_1^{\alpha(1)} \cdots x_n^{\alpha(n)}$ with
 $\sum_{i=1}^n \alpha(i) \equiv_{\gcd(k-1,q-1)} 1$.
\end{lem}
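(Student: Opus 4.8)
Write $d := \gcd(k-1,q-1)$ and let $C'$ denote the set of all monomials $x_1^{\alpha(1)}\cdots x_n^{\alpha(n)}$ with $\sum_{i=1}^n \alpha(i) \equiv_d 1$. By Lemma~\ref{lem:firstExa} (applied with $b=d$, which is legitimate since $d\mid q-1$), $C'$ is a monomial clone on $\F_q$, and it clearly contains $x_1\cdots x_k$ because $k\equiv_d 1$ (as $d\mid k-1$). Hence $\genMClo{\{x_1\cdots x_k\}}\subseteq C'$. The real content is the reverse inclusion: every monomial whose total degree is $\equiv_d 1$ already lies in $\genMClo{\{x_1\cdots x_k\}}$.

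The plan for $C'\subseteq\genMClo{\{x_1\cdots x_k\}}$ is to first produce, inside $\genMClo{\{x_1\cdots x_k\}}$, the monomial $x_1\cdots x_{k'}$ for some $k'$ with $k'-1\equiv_{q-1} d$ (equivalently $\overline{k'-1}=d$ if $d<q-1$, or for $k'$ of the shape $x_1\cdots x_q$ when $d=q-1$ which is already handled by Lemma~\ref{lem:maxIdClone} combined with $k\equiv_{q-1}1$). To get down to total degree $d+1$: from $x_1\cdots x_k\in\genMClo{\{x_1\cdots x_k\}}$, Lemma~\ref{lem:combRule} gives $x_1\cdots x_{1+t(k-1)}\in\genMClo{\{x_1\cdots x_k\}}$ for all $t\in\N$. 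Since $\gcd(k-1,q-1)=d$, Bézout lets us choose $t$ with $t(k-1)\equiv_{q-1} d$; for such $t$, setting $\alpha:=\overline{1+t(k-1)-1}=d$ — wait, more directly: the monomial $x_1\cdots x_{1+t(k-1)}$, after identifying all but its first $d+1$ variables down appropriately via the cutting lemmas, yields $x_1\cdots x_{d+1}$. Concretely, once $x_1\cdots x_q\in\genMClo{\{x_1\cdots x_k\}}$ is available — which follows from Lemma~\ref{lem:2var1x1xq} since $x_1\cdots x_k$ has at least two exponents equal to $1$ (here $k\ge 2$) — one applies Corollary~\ref{cor:x1xqxathenx1xa}: we need $x_1^{\alpha}\in\genMClo{\{x_1\cdots x_k\}}$ for a suitable $\alpha$ with $\overline{\alpha}=d+1$ (so that $\alpha\in\{1,\dots,q\}$ after reduction, noting $d+1\le q$). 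To obtain such a unary monomial, identify all variables of $x_1\cdots x_{1+t(k-1)}$ to get $x_1^{1+t(k-1)}$; choosing $t$ with $1+t(k-1)\equiv_{q-1} d+1$, i.e. $t(k-1)\equiv_{q-1}d$, which is solvable, gives $x_1^{d+1}\in\genMClo{\{x_1\cdots x_k\}}$ after reduction. Then Corollary~\ref{cor:x1xqxathenx1xa} produces $x_1\cdots x_{d+1}\in\genMClo{\{x_1\cdots x_k\}}$.

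Finally, having $x_1\cdots x_{d+1}\in\genMClo{\{x_1\cdots x_k\}}$, Lemma~\ref{lem:combRule} gives $x_1\cdots x_{1+n d}\in\genMClo{\{x_1\cdots x_k\}}$ for all $n\in\N_0$, so $\genMClo{\{x_1\cdots x_k\}}$ contains arbitrarily long such monomials. Now take any monomial $m=x_1^{\alpha(1)}\cdots x_n^{\alpha(n)}$ with $S:=\sum_{i=1}^n\alpha(i)\equiv_d 1$; after reducing each exponent we may assume $1\le\alpha(i)\le q-1$, and then $S\ge n$ while $S\equiv_d 1$, so $S = 1+md$ for some $m$ with $1+md\ge n$. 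Starting from $x_1\cdots x_{1+md}\in\genMClo{\{x_1\cdots x_k\}}$ (which has total degree $1+md = S$ and at least $n$ variables), identify variables in blocks so that variable $i$ of the target collects exactly $\alpha(i)$ of them; since the block sizes $\alpha(1),\dots,\alpha(n)$ are positive and sum to $1+md$, this is possible, and by closure under identification we obtain $m\in\genMClo{\{x_1\cdots x_k\}}$. This proves $C'\subseteq\genMClo{\{x_1\cdots x_k\}}$ and hence equality.

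The main obstacle is the bookkeeping in the descent step: ensuring one can land on a unary monomial $x_1^\alpha$ with $\overline{\alpha}$ exactly $d+1$ (and $\le q$) using only the combination lemma plus variable identification, and correctly invoking Corollary~\ref{cor:x1xqxathenx1xa}; the rest — building up long monomials of total degree $\equiv_d 1$ and then redistributing exponents by identification — is routine given the earlier lemmas. One should also double-check the degenerate cases $d=q-1$ (reduce to Lemma~\ref{lem:maxIdClone}) and $d=1$ (then $C'=\nabla$ and one must check $x_1\cdots x_k$ with $k\equiv_{q-1}\!\!\not\equiv$ anything forcing, generates all monomials, which again follows from the chain above terminating at $x_1x_2$).
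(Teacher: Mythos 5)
Your proof is correct and follows essentially the same route as the paper's: obtain $x_1\cdots x_q$ from the two $1$'s in $x_1\cdots x_k$, use Lemma~\ref{lem:combRule} plus B\'ezout and variable identification to reach $x_1^{1+\gcd(k-1,q-1)}$, then apply Corollary~\ref{cor:x1xqxathenx1xa} and build up all monomials of total degree $\equiv_{\gcd(k-1,q-1)} 1$; the converse inclusion via Lemma~\ref{lem:firstExa} is also exactly the paper's closing remark. The only cosmetic differences are that you invoke Lemma~\ref{lem:2var1x1xq} directly rather than its corollary Lemma~\ref{lem:twoInvThenx1xq}, and you pass through $x_1^{1+t(k-1)}$ instead of $x_1 x_2^{t(k-1)}$ en route to $x_1^{1+\gcd(k-1,q-1)}$.
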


\begin{proof}
By Lemma~\ref{lem:twoInvThenx1xq} we have $x_1 \cdots x_q \in C := \genMClo{\{x_1 \cdots x_{k}\}}$.
By identifying the variables $x_3, \ldots, x_k$ with $x_2$ of the monomial $x_1 \cdots x_{k}$,
we get that $x_1 x_2^{k-1} \in C$.
By Lemma~\ref{lem:combRule} and by identifying variables, we have
for all $t\in \N$ that
$x_1x_2^{t (k-1)} \in C$.
There are $t_1, t_2 \in \Z$ such that $t_1\cdot (k-1) + t_2 \cdot (q-1) = \gcd(k-1,q-1)$,
and thus there is a $t' \in \N$
such that $\overline{t'\cdot (k-1)} = \gcd(k-1, q-1)$.
This means that $x_1x_2^{\gcd(k-1,q-1)} \in C$,
and by identifying variables we get $x_1^{1+\gcd(k-1, q-1)} \in C$.
Since $x_1\cdots x_q \in C$ and $x_1^{1+\gcd(k-1, q-1)} \in C$,
we get by Corollary~\ref{cor:x1xqxathenx1xa} that $x_1\cdots x_{1+\gcd(k-1, q-1)} \in C$.
  By Lemma~\ref{lem:combRule} we get now for all $n\in\N$ that $x_1 \cdots x_{1+n\cdot\gcd(k-1, q-1)} \in C$.
 Hence, we can generate all monomials where the sum of the exponents is congruent to $1$ modulo $\gcd(k-1, q-1)$  by identifying variables.
 \par Finally, the set of such monomials is a monomial clone by Lemma \ref{lem:firstExa}, 
 and it clearly contains $x_1\cdots x_k$.
\end{proof}

\section{The whole lattice of monomial clones if \texorpdfstring{$q\leq 4$}{q<=4}}\label{sec:wholeLatSmq}

We start our investigation on the lattice of monomial clones for the cases $\F_2$ and $\F_3$.
The following Proposition slightly generalizes the result of \cite{MP:MCSFF}
 and \cite[Corollary 3.7]{MP:MCLOCRES} for $\F_3$ where the lattice of monomial clones which are generated by a single monomial is given.
In the case of $\F_2$ these two lattices are equal and in the
case of $\F_3$ the difference of the number of members of these two lattices is one, which means, there is exactly one monomial clone on $\F_3$ that is not singly generated.

\begin{pro}\label{pro:smallMonClon}
 The lattices of monomial clones on $\F_2$ and $\F_3$
 are given in Figure \ref{fig:LRep}.

\begin{figure}[h!]
 \begin{minipage}[t]{.4\linewidth}
 \begin{center}
\begin{tikzpicture}[scale=0.7, transform shape]
  \node (max) at (0, 5.5) {$\genMClo{\{x_1x_2\}}$};
  \node (min) at (0,-0.5) {$\genMClo{\{x_1\}}$};
  \draw (min) -- (max);

\end{tikzpicture}\subcaption{Lattice of monomial clones on $\F_2$}
 \end{center}
 \end{minipage}
 \hfill
\begin{minipage}[t]{.4\linewidth}
\begin{center}
\begin{tikzpicture}[scale=0.7, transform shape]
  \node (max) at (0, 5.5)  {$\genMClo{\{x_1x_2\}}$};
  \node (a) at (-2, 4)  {$\genMClo{\{x_1x_2x_3\}}$};
  \node (b) at (2, 4){$\genMClo{\{x_1^2, x_1x_2^2\}}$};
  \node (c) at (-2,1) {$\genMClo{\{x_1x_2^2\}}$};
  \node (d) at (2, 2.5){$\genMClo{\{x_1^2x_2^2\}}$};
  \node (e) at (2,1){$\genMClo{\{x_1^2\}}$};
  \node (min) at (0,-0.5) {$\genMClo{\{x_1\}}$};

  \draw (min) -- (c) -- (a) -- (max)
	(e) -- (d) -- (b) -- (max)
	(min) -- (e)
	(c) -- (b);

\end{tikzpicture}
 \subcaption{Lattice of monomial clones on $\F_3$}\label{fig:pM1}
 \end{center}
\end{minipage}
\caption{Monomial Clones on $\F_2$ and $\F_3$}\label{fig:LRep}
\end{figure}
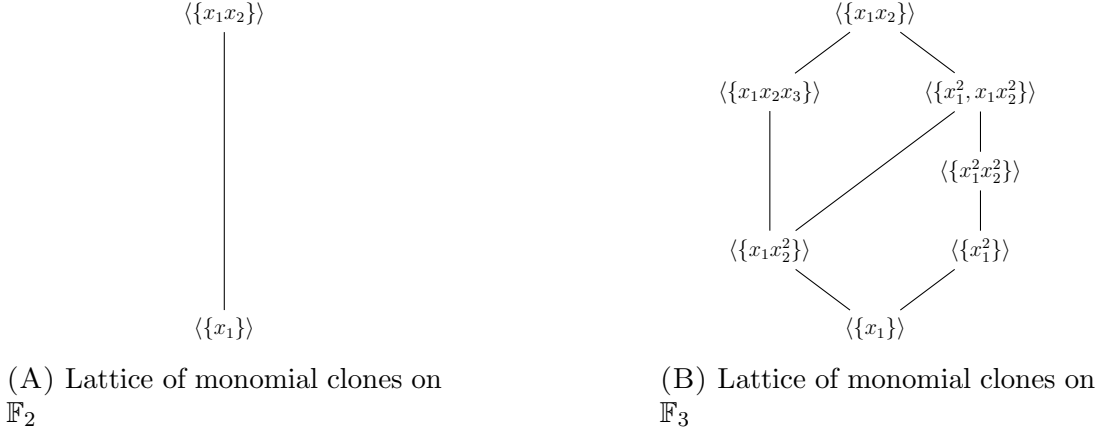

\end{pro}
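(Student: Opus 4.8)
The plan is to treat the two fields separately, in each case listing the finitely many monomial clones and verifying that the Hasse diagram in Figure~\ref{fig:LRep} is correct. For $\F_2$ we have $q-1=1$, so every exponent is equivalent to $1$ and the only monomials up to equivalence are $x_1$, $x_1x_2$, $x_1x_2x_3$, etc.; by Lemma~\ref{lem:combRule} applied to $x_1x_2$ we get $x_1\cdots x_{1+t}\in\genMClo{\{x_1x_2\}}$ for all $t$, so $\genMClo{\{x_1x_2\}}$ already contains every monomial with two or more variables. Hence there are exactly two monomial clones, $\Delta=\genMClo{\{x_1\}}$ and $\nabla=\genMClo{\{x_1x_2\}}$, and the lattice is the two-element chain.

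For $\F_3$ we have $q-1=2$, so every exponent is equivalent to either $1$ or $2$, and $\overline{C}$ consists of monomials $\prod x_i^{\alpha(i)}$ with $\alpha(i)\in\{1,2\}$. First I would enumerate the candidate clones. The key structural observations come from the lemmas of Section~\ref{sec:FirstObs}: a monomial clone $C\ne\Delta$ must contain some monomial with an exponent, so it contains $x_1^2$ (identify all variables, using $1+\cdots$ reductions via Lemma~\ref{lem:cutQ1}) or contains a monomial with exponent $1$ in two places, in which case $x_1x_2x_3=x_1\cdots x_q\in C$ by Lemma~\ref{lem:2var1x1xq}. This gives a case split. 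The seven clones to exhibit are: $\Delta$; $\genMClo{\{x_1^2\}}$ (just the unary squaring power, closed up — note $\gcd(2,q-1)=2$ so no new variables appear); $\genMClo{\{x_1^2x_2^2\}}$ (total degree $\equiv_2 0$ monomials with all exponents $2$, i.e. even width); $\genMClo{\{x_1x_2^2\}}$, which by Lemma~\ref{lem:x1xkThenAllK} with $k=3$ consists of all monomials of total degree $\equiv_{\gcd(2,2)}1$, wait—here I must instead use that $x_1x_2^2$ has total degree $\equiv_2 1$, and check directly via Lemmas~\ref{lem:cutQ1} and~\ref{lem:combRule} which monomials of odd total degree are generated; $\genMClo{\{x_1x_2x_3\}}$, which by Lemma~\ref{lem:x1xkThenAllK} with $k=3$ is all monomials of total degree $\equiv_2 1$; $\genMClo{\{x_1^2,x_1x_2^2\}}=\genMClo{\{x_1^2\}}\vee\genMClo{\{x_1x_2^2\}}$; and $\nabla=\genMClo{\{x_1x_2\}}$, which contains everything by Lemma~\ref{lem:x1xkThenAllK} with $k=2$ (total degree $\equiv_1 1$, i.e. all monomials).

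The heart of the argument is showing this list is complete and that the covering relations are exactly those drawn. I would argue: given $C\notin\{\Delta\}$, either $x_1^2\in C$ or not. If $x_1^2\notin C$ then $C$ contains no monomial with an even-degree "defect", and one shows $C$ is generated by monomials of total degree $1\pmod 2$; combined with Lemma~\ref{lem:x1xkThenAllK} this forces $C\in\{\genMClo{\{x_1x_2^2\}},\genMClo{\{x_1x_2x_3\}},\nabla\}$ according to whether $C$ contains $x_1x_2x_3$ and then whether it contains $x_1x_2$. If $x_1^2\in C$, then $C\supseteq\genMClo{\{x_1^2\}}$, and the quotient behaviour is governed by which even-width or odd-width monomials lie in $C$: the possibilities are $\genMClo{\{x_1^2\}}$, $\genMClo{\{x_1^2x_2^2\}}$, $\genMClo{\{x_1^2,x_1x_2^2\}}$, and $\nabla$. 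The main obstacle I anticipate is the bookkeeping in this second case — carefully showing, using Lemma~\ref{lem:cutQ1} (to delete pairs of exponents summing to $0\bmod 2$), Lemma~\ref{lem:q1Allq1} (to append exponents equal to $q-1=2$), and Lemma~\ref{lem:combRule}, that no intermediate clone exists between e.g. $\genMClo{\{x_1^2x_2^2\}}$ and $\genMClo{\{x_1^2,x_1x_2^2\}}$, and that $\genMClo{\{x_1x_2^2\}}$ is incomparable with $\genMClo{\{x_1^2\}}$ and with $\genMClo{\{x_1^2x_2^2\}}$ — i.e. that $x_1x_2^2\vee$-joined with nothing below it produces $x_1^2$ only together with the degree-$1$ part. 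Once all seven clones are pinned down by their sets $\overline{C}$, the covering relations, and hence Figure~\ref{fig:pM1}, follow by direct comparison of these explicitly described sets of monomials.
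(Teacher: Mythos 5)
Your proposal is correct in substance and follows essentially the same route as the paper: treat $\F_2$ and $\F_3$ separately, enumerate the finitely many clones by listing the possible sets $\overline{C}$ of reduced monomials, and close the argument with the structural lemmas of Section~\ref{sec:FirstObs}. The only real differences are cosmetic. For $\F_2$ you use Lemma~\ref{lem:combRule} where the paper invokes Lemma~\ref{lem:q1Allq1}; both give the same conclusion. For $\F_3$ your case split is on whether $x_1^2\in C$, while the paper splits on whether $C$ contains a monomial $\neq x_1$ having $1$ as an exponent; these two partitions of the seven clones are different (yours groups $\genMClo{\{x_1^2,x_1x_2^2\}}$ and $\nabla$ with $C_2$ and $C_3$, the paper groups them with the ``$1$-exponent'' clones), but both lead to the same complete list and neither is clearly preferable. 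You also lean on Lemma~\ref{lem:x1xkThenAllK} to identify $\genMClo{\{x_1x_2x_3\}}$ and $\nabla$ at a stroke, which the paper instead derives by hand from Lemma~\ref{lem:cutQ1} and Lemma~\ref{lem:q1Allq1}; this is a mild streamlining and is perfectly legitimate since Lemma~\ref{lem:x1xkThenAllK} appears before the Proposition.

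Two small slips, neither fatal: first, in the case $x_1^2\notin C$ you list $\nabla$ as a possibility, but $\nabla$ contains $x_1^2$ (identify the variables of $x_1x_2$), so that case can only yield $\Delta$, $\genMClo{\{x_1x_2^2\}}$, or $\genMClo{\{x_1x_2x_3\}}$. Second, as you yourself caught mid-sentence, Lemma~\ref{lem:x1xkThenAllK} does not apply to $x_1x_2^2$ since its hypothesis requires all exponents equal to $1$; describing $\overline{\genMClo{\{x_1x_2^2\}}}$ as the monomials with exactly one exponent equal to $1$, as the paper does, is the correct fix, and your plan to verify this directly from Lemmas~\ref{lem:cutQ1} and~\ref{lem:q1Allq1} is sound. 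With these repairs the covering relations follow from the explicit descriptions of the seven sets $\overline{C}$ exactly as you outline.
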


\begin{proof}
Let $C$ be a monomial clone on $\F_2$.
All functions on $\F_2$ can be induced by polynomials with only $0$ and $1$ as exponents.
There is only one unary function that is induced by a monomial on $\F_2$,
namely the function induced by $x_1$.
 If $C$ contains a monomial with width $t\in \N$ with $t>1$,
 then $C$ contains $x_1\ldots x_t$.
 The result follows now from Lemma~\ref{lem:q1Allq1}.

 Let us now determine the monomial clones on $\F_3$:
  $C_1=\genMClo{\{x_1\}}$ is the smallest monomial clone.
 $C_2= \genMClo{\{x_1^2\}}$ contains all monomials of width $1$, and thus $C_1 \subset C_2$
 and there are no other monomial clones between $C_1$ and $C_2$,
 since $\overline{a} \in \{0,1,2\}$ for all $a\in\N_0$.
 Let $C_3 := \genMClo{\{x_1^2x_2^2\}}$.
 By Lemma~\ref{lem:cutQ1} and the fact that $C_2$ only contains monomials of width $1$
 we have $C_2 \subset C_3$.
 By Lemma~\ref{lem:q1Allq1} we have that $C_3$ contains all monomials $x_1^2\cdots x_n^2$ with $n\in\N$.
 By Lemma~\ref{lem:q1Allq1} every monomial in $C_3$ of width greater $1$
 generates $C_3$, and therefore there are no other monomial clones between $C_2$ and $C_3$.
 Now let $C$ be a monomial clone which contains a monomial that does not lie in $\genMClo{\{x_1\}}$
 but contains $1$ as exponent.
 Then $C$ contains $m = x_1 \prod_{i=2}^n x_i^{\alpha(i)}$ for $n\geq 2$ and $\alpha(2), \ldots, \alpha(n) \in \{1,2\}$.
 First, we assume that $1+\lvert \{j\in \{2, \ldots,n\} \mid \alpha(j) = 1 \} \rvert$ is even.
 Then by Lemma~\ref{lem:cutQ1},
 $C$ contains the monomial $x_1x_2$ and thus all monomials.
 Now we assume that $1+\lvert \{j\in \{2, \ldots,n\} \mid \alpha(j) = 1 \} \rvert$ is odd and $C$ does not contain
 a monomial with an even number ($> 0$) of $1$'s as exponent.
 If  $1+\lvert \{j\in \{2, \ldots,n\} \mid \alpha(j) = 1 \} \rvert=1$,
 we have $m= x_1 \prod_{i=2}^n x_i^{2}$.
 By Lemma~\ref{lem:cutQ1} we have $x_1x_2^{2} \in C$ and by Lemma~\ref{lem:q1Allq1}
 we have $m\in \genMClo{\{x_1x_2^{2}\}}$.
 We see that $\overline{\genMClo{\{x_1x_2^{2}\}}}$ contains exactly all monomials with exactly one exponent with $1$
 and thus $x_1^2 \not \in \genMClo{\{x_1x_2^{2}\}}$.
 If $x_1^2 \in C$, we get $x_1^2x_2^2 \in C$ by plugging $x_1^2$ into $x_1$ of $x_1x_2^2$.
 If $x_1^2 \cdots x_{n'}^2\in C$ for some $n'\geq 2$ we get by Lemma~\ref{lem:cutQ1} that
 $x_1^2 \in C$ and thus there are no monomial clones between
 $\genMClo{\{x_1x_2^2\}}$ and $\genMClo{\{x_1^2, x_1x_2^2\}}$, and neither between
  $\genMClo{\{x_1^2x_2^2\}}$ and $\genMClo{\{x_1^2, x_1x_2^2\}}$.
 Now we assume $1+\lvert \{j\in \{2, \ldots,n\} \mid \alpha(j) = 1 \} \rvert>1$.
 Then, by Lemma~\ref{lem:cutQ1}, $C$ contains the monomial $x_1x_2x_3$.
 We see that $x_1x_2x_3$ does not generate $x_1^2$,
 since $\overline{\genMClo{\{x_1x_2x_3\}}}$ contains
 all monomials with an odd number of variables with exponent $1$.
 We also have $\genMClo{\{x_1x_2^2\}} \subset \genMClo{\{x_1x_2x_3\}}$,
 since all monomials of $\overline{\genMClo{\{x_1x_2^2\}}}$ contain exactly one exponent with $1$.
 If $C$ contains $x_1x_2x_3$ and $x_1^2$, then $C$ contains $x_1x_2x_3^2$ and Lemma~\ref{lem:cutQ1} yields $x_1x_2\in C$.
 This finishes the proof.
\end{proof}

The next proposition describes the whole lattice of monomial clones if $q=4$.

\begin{pro}\label{pro:latM4}
 The lattice of monomial clones on $\F_4$ is given in Figure~\ref{fig:lat4}.
 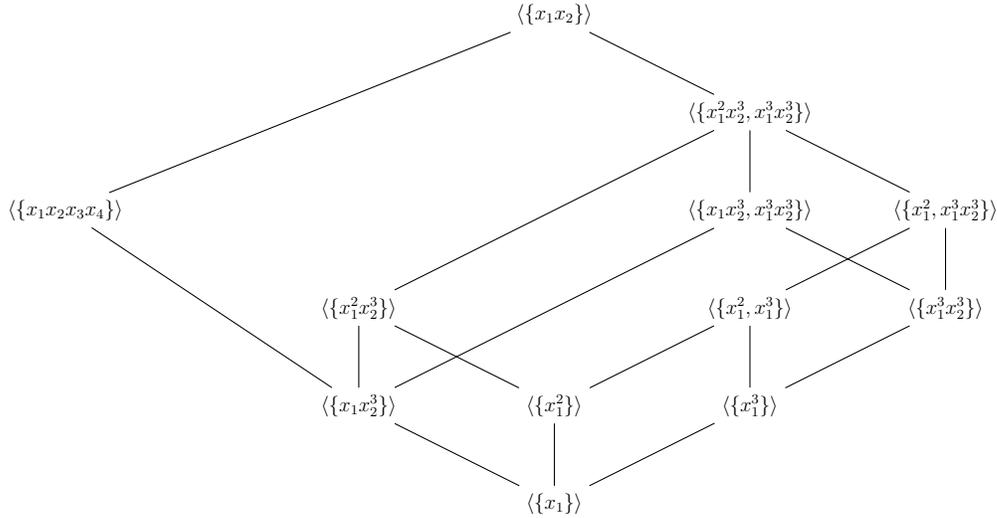
\begin{figure}[ht!]
 \begin{center}
\begin{tikzpicture}[scale=0.65, transform shape]
  \node (max) at (0, 10) {$\genMClo{\{x_1x_2\}}$};
  \node (min) at (0,0) {$\genMClo{\{x_1\}}$};
  \node (a) at (-4,2) {$\genMClo{\{x_1x_2^{3}\}}$};
  \node (b) at (0, 2) {$\genMClo{\{x_1^2\}}$};
  \node (c) at (4, 2) {$\genMClo{\{x_1^{3}\}}$};
  \node (d) at (-4, 4) {$\genMClo{\{x_1^2x_2^{3}\}}$};
  \node (e) at (4, 4){$\genMClo{\{x_1^2, x_1^{3}\}}$};
  \node (f) at (-10, 6){$\genMClo{\{x_1x_2x_3x_4\}}$};

  \node (c1) at (8, 4) {$\genMClo{\{ x_1^{3}x_2^{3}\}}$};
  \node (a1) at (4,6) {$\genMClo{\{x_1x_2^{3}, x_1^{3}x_2^{3}\}}$};
  \node (b1) at (8, 6) {$\genMClo{\{x_1^2, x_1^{3}x_2^{3}\}}$};
  \node (d1) at (4, 8) {$\genMClo{\{x_1^2x_2^{3}, x_1^{3}x_2^{3}\}}$};

  \draw (min) -- (a) -- (d)
	(b) -- (e) --(b1)
	(a)-- (a1)
	(c) -- (c1) --(b1) --(d1)
	(c1) -- (a1) --(d1)
	(min) -- (b) -- (d) --(d1) --(max)
	(a) -- (f) -- (max)
	(min) -- (c) --(e);

\end{tikzpicture}\caption{Lattice of monomial clones on $\F_4$}\label{fig:lat4}
 \end{center}
\end{figure}
\end{pro}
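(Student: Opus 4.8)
The plan is to take an arbitrary monomial clone $C$ on $\F_4$ and show it is one of the twelve clones depicted in Figure~\ref{fig:lat4}; afterwards one checks that these twelve sets really are monomial clones and that the displayed covering relations hold. Since $q=4$, all exponents live in $\{1,2,3\}$ with respect to $\equiv$, and on $\{1,2,3\}$ the operation $(a,b)\mapsto\overline{ab}$ has $1$ as neutral element, satisfies $\overline{2\cdot 2}=1$, and $\overline{3a}=3$ for every $a\in\N$. \emph{Generic case:} suppose $C$ contains a monomial two of whose exponents are coprime to $q-1=3$. Permuting variables and applying Lemma~\ref{lem:twoInvThenx1xq} gives $x_1x_2x_3x_4\in C$, so by Lemma~\ref{lem:x1xkThenAllK} the clone $f:=\genMClo{\{x_1x_2x_3x_4\}}=\{\prod_i x_i^{\alpha(i)}\mid\sum_i\alpha(i)\equiv_3 1\}$ is contained in $C$. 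If $C=f$ we are done; otherwise $C$ contains a monomial $m=\prod_i x_i^{\alpha(i)}$ with $\sum_i\alpha(i)\not\equiv_3 1$, and identifying all its variables yields $x_1^{r}\in C$ with $r:=\overline{\sum_i\alpha(i)}\in\{2,3\}$. By Corollary~\ref{cor:x1xqxathenx1xa} then $x_1\cdots x_r\in C$: if $r=2$ this is $x_1x_2$, so $C=\nabla$; if $r=3$ then $x_1x_2x_3\in C$, whence $C=\genMClo{\{x_1x_2x_3\}}=\nabla$ by Lemma~\ref{lem:x1xkThenAllK} (as $\gcd(2,3)=1$). So in the generic case $C\in\{f,\nabla\}$.

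\emph{Degenerate case:} now every monomial of $\overline C$ has at most one exponent different from $3$, so after permuting variables it reads $x_1^{s}x_2^{3}\cdots x_n^{3}$ for some $n\geq 1$ and $s\in\{1,2,3\}$. By Lemma~\ref{lem:q1Allq1} (positive part $x_1^{s}$, together with $n-1$ variables of exponent $q-1=3$), whenever $C$ contains one width-$\geq 2$ such monomial it contains all of $x_1^{s}x_2^{3}\cdots x_k^{3}$ for $k\geq 1$, in particular the width-$1$ monomial $x_1^{s}$. Hence $C$ is fully determined by
\[
S_1:=\{\,s\in\{1,2,3\}\mid x_1^{s}\in C\,\},\qquad S_2:=\{\,s\in\{1,2,3\}\mid x_1^{s}x_2^{3}\in C\,\}\subseteq S_1 .
\]
Since the width-$1$ monomials of $C$ are closed under composition, $S_1$ is a submonoid of $(\{1,2,3\},(a,b)\mapsto\overline{ab})$ containing $1$, i.e.\ $S_1\in\{\{1\},\{1,2\},\{1,3\},\{1,2,3\}\}$; and substituting $x_1^{s'}\in C$, resp.\ $x_1^{s'}x_2^{3}\in C$, into the first variable of $x_1^{s}x_2^{3}$ gives $x_1^{\overline{ss'}}x_2^{3}\in C$, so $S_2$ is closed under multiplication by $S_1$. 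One checks, using Lemma~\ref{lem:cutQ1} and the computations above, that there is no further restriction, so that running through the four possibilities for $S_1$ leaves exactly the ten pairs
\[
\begin{array}{c|cccccccccc}
S_1 & \{1\} & \{1\} & \{1,2\} & \{1,2\} & \{1,3\} & \{1,3\} & \{1,3\} & \{1,2,3\} & \{1,2,3\} & \{1,2,3\}\\
S_2 & \0 & \{1\} & \0 & \{1,2\} & \0 & \{3\} & \{1,3\} & \0 & \{3\} & \{1,2,3\}
\end{array}
\]
corresponding, in this order, to $\Delta$ and to the clones $\genMClo{\{x_1x_2^{3}\}}$, $\genMClo{\{x_1^{2}\}}$, $\genMClo{\{x_1^{2}x_2^{3}\}}$, $\genMClo{\{x_1^{3}\}}$, $\genMClo{\{x_1^{3}x_2^{3}\}}$, $\genMClo{\{x_1x_2^{3},x_1^{3}x_2^{3}\}}$, $\genMClo{\{x_1^{2},x_1^{3}\}}$, $\genMClo{\{x_1^{2},x_1^{3}x_2^{3}\}}$, $\genMClo{\{x_1^{2}x_2^{3},x_1^{3}x_2^{3}\}}$ of Figure~\ref{fig:lat4} (the pair $(S_1,S_2)$ attached to each of these being computed from its generators as above).

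\emph{Verification.} Finally one checks that these twelve sets are monomial clones, are pairwise distinct, and realise precisely the covers of Figure~\ref{fig:lat4}. For $\nabla$ and $f$ this is Lemma~\ref{lem:firstExa} and Lemma~\ref{lem:x1xkThenAllK}; for the other ten one verifies directly the explicit form of $\overline C$ --- for instance $\overline\Delta=\{x_i\mid i\in\N\}$, $\overline{\genMClo{\{x_1^{2}\}}}=\{x_1,x_1^{2}\}$, $\overline{\genMClo{\{x_1^{3}\}}}=\{x_1,x_1^{3}\}$, $\overline{\genMClo{\{x_1^{2},x_1^{3}\}}}=\{x_1,x_1^{2},x_1^{3}\}$, $\overline{\genMClo{\{x_1^{3}x_2^{3}\}}}$ is the set of projections together with all monomials all of whose exponents equal $3$, $\overline{\genMClo{\{x_1x_2^{3}\}}}$ is the set of monomials with exactly one exponent equal to $1$ and all others equal to $3$, and so on, the remaining descriptions following from $(S_1,S_2)$. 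From these all inclusions among the twelve clones can be read off, giving the joins $\genMClo{\{x_1^{2},x_1^{3}\}}=\genMClo{\{x_1^{2}\}}\vee\genMClo{\{x_1^{3}\}}$, $\genMClo{\{x_1^{2}x_2^{3}\}}=\genMClo{\{x_1x_2^{3}\}}\vee\genMClo{\{x_1^{2}\}}$, $\genMClo{\{x_1^{2}x_2^{3},x_1^{3}x_2^{3}\}}=\genMClo{\{x_1^{2}x_2^{3}\}}\vee\genMClo{\{x_1x_2^{3},x_1^{3}x_2^{3}\}}$, $\nabla=f\vee\genMClo{\{x_1^{2}x_2^{3},x_1^{3}x_2^{3}\}}$, and the rest; and since the case analysis is exhaustive, no monomial clone lies strictly between the endpoints of an edge of the figure, so all the depicted edges are genuine covers.

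\emph{Main obstacle.} The delicate part is the degenerate case: one must be certain that passing to the pair $(S_1,S_2)$ loses nothing --- that no chain of substitutions (in particular, feeding width-$\geq 2$ monomials into exponent-$3$ slots) ever produces a monomial not already predicted by $(S_1,S_2)$ --- and that $S_2\subseteq S_1$ together with closure of $S_2$ under multiplication by $S_1$ are exactly the constraints that arise. Once this bookkeeping is settled, the generic case and the verification are routine applications of the lemmas of Section~\ref{sec:FirstObs} and direct inspection of the now explicitly known sets $\overline C$.
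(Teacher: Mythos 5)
Your proof is essentially correct and follows the same high-level split as the paper: the ``generic'' case (a monomial with two exponents coprime to $3$ occurs) is handled in exactly the same way via Lemma~\ref{lem:twoInvThenx1xq}, Corollary~\ref{cor:x1xqxathenx1xa}, and Lemmas~\ref{lem:maxIdClone} and~\ref{lem:x1xkThenAllK}, yielding precisely $\genMClo{\{x_1x_2x_3x_4\}}$ and $\nabla$. Where you genuinely diverge is the ``degenerate'' case. The paper observes that, by Lemma~\ref{lem:cutQ1} and Lemma~\ref{lem:q1Allq1}, only the five generators $x_1^2, x_1^3, x_1x_2^3, x_1^2x_2^3, x_1^3x_2^3$ matter and then combs through the lattice of subsets of these generators; you instead attach to each degenerate clone the abstract invariant $(S_1,S_2)$ --- the set of width-$1$ exponents and the set of width-$\geq2$ ``leading'' exponents --- and enumerate the valid pairs. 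This is cleaner bookkeeping and makes the inclusion relations transparent ($C\subseteq C'$ iff $S_1\subseteq S_1'$ and $S_2\subseteq S_2'$, in the degenerate range), so the covers of Figure~\ref{fig:lat4} can literally be read off.

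That said, the ``main obstacle'' you flag is a real gap in the writeup, and it deserves to be closed rather than asserted. Two things must be checked. First, $(S_1,S_2)$ really determines $C$: for this, Lemma~\ref{lem:q1Allq1} (with $t=1$, $\alpha(1)=s$) shows that for fixed $s$, the monomials $x_1^s x_2^3\cdots x_n^3$ are in $C$ for one $n\geq 2$ iff for all $n\geq 1$, so the width-$\geq2$ part of $\overline C$ is entirely governed by $S_2$ (and the width-$1$ part by $S_1$). Second, degeneracy is actually preserved under substitution, so that a degenerate clone really is captured by $(S_1,S_2)$: substituting $m_2=x_1^{s_2}x_2^3\cdots$ into the exponent-$s_1$ slot of $m_1=x_1^{s_1}x_2^3\cdots$ gives exponents $\overline{s_1s_2}$ and $\overline{3s_1}=3$, while substituting into an exponent-$3$ slot gives $\overline{3s_2}=3$ and $\overline{3\cdot3}=3$; identifying two variables replaces exponents by $\overline{s+3}=s$ or $\overline{3+3}=3$. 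In all cases at most one exponent $\neq 3$ survives. Once these two observations are spelled out, your constraints ($S_1$ a submonoid of $(\{1,2,3\},\overline{\cdot\,})$ containing $1$, $S_2\subseteq S_1$, $S_2\cdot S_1\subseteq S_2$) are indeed exactly the closure conditions, your table of ten pairs is complete, and the rest of your verification is sound. You should also state explicitly, rather than leave implicit, that each of the ten named clones realizes the pair attached to it (e.g.\ that $x_1^2\notin\genMClo{\{x_1x_2^3,x_1^3x_2^3\}}$ and $x_1x_2^3\notin\genMClo{\{x_1^2,x_1^3x_2^3\}}$), since otherwise distinctness of the twelve clones is not established; the paper proves these two nontrivial non-memberships via a preserved relation and a parity-of-exponents argument respectively.
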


\begin{proof}
 Let $C$ be a monomial clone on $\F_4$.
 If $C$ contains $x_1x_2$, then $C$ contains all monomials.
 First, we show that $\genMClo{\{x_1x_2x_3x_4\}}$ and $\genMClo{\{x_1x_2\}}$
 are different and that these two monomial clones are the only monomial clones that
 contain a monomial which contains two exponents different from $0$ modulo $3$.
 To this end, we assume that $C$ contains a monomial $m= x_1^{\alpha(1)}x_2^{\alpha(2)}\cdots x_n^{\alpha(n)}$
 where $n\in\N\setminus\{1\}$ and at least 2 exponents are not equal to $0$ modulo $3$.
 We assume that $\alpha(1)\not \equiv_{3} 0$ and $\alpha(2) \not \equiv_{3} 0$.
 Then Lemma~\ref{lem:twoInvThenx1xq} yields $x_1 x_2x_3x_4\in C$,
 since $\gcd(\alpha(1), 3) = 1$ and $\gcd(\alpha(2), 3) = 1$.
 By Lemma~\ref{lem:maxIdClone} $C$ contains all idempotent monomials.
 Now we assume that $x_1\cdots x_4 \in C$
 and $C$ contains a monomial $m'$ which is not idempotent.
 By identifying all variables of $m'$ with $x_1$,
 we get that there exists $\alpha \in \N$ such that $C$ contains $x_1^{\alpha}$.
 Since $m'$ is not idempotent, we have $\overline{\alpha} \neq 1$.
 If $\overline{\alpha} = 3$,
 we have $x_1x_2x_3x_4, x_1^{3} \in C$
 and thus $x_1x_2x_3^{3}x_4^{3} \in C$.
 Now Lemma~\ref{lem:cutQ1} yields $x_1 x_2 \in C$.
 If $\overline{\alpha} = 2$,
 then $C$ contains $x_1^{2}$.
 This means that $x_1x_2x_3x_4, x_1^2 \in C$ and thus Corollary~\ref{cor:x1xqxathenx1xa}
 yields $x_1x_2 \in C$.
 If $C$ contains only idempotent monomials,
 then we get by Lemma~\ref{lem:maxIdClone} that $C = \genMClo{\{x_1x_2x_3x_4\}}$
 and thus $x_1x_2 \not \in C$.
 By the previous analysis we have,
 if $C$ contains a monomial which is not idempotent and it contains two exponents which are not $0$ modulo $3$,
 then $C$ contains $x_1x_2$.

 Now we assume that all monomials of $C$ contain at most one exponent which is not equal to $0$ modulo $3$.
 By Lemma~\ref{lem:cutQ1} and Lemma~\ref{lem:q1Allq1}
 we can restrict to the following monomials as possible generators for $C$:
 $x_1^2$, $x_1^{3}$, $x_1x_2^{3}$, $x_1^2x_2^{3}$, and $x_1^{3}x_2^{3}$.
 These monomials induce different functions.
Now we want to find all monomial clones which we get by different combinations of these generators.
We start with different combinations including $x_1^{3}x_2^{3}$.
 We have that $x_1^2 \in \genMClo{\{x_1^2x_2^{3}\}}$,
 $x_1^{3} \in \genMClo{\{x_1^{3}x_2^{3}\}}$
 and $x_1x_2^{3} \in \genMClo{\{x_1^2x_2^{3}\}}$.
 Hence, the monomial clones above $\genMClo{\{x_1^{3}x_2^{3}\}}$ and below $\genMClo{\{x_1x_2\}}$
 are given by $\genMClo{\{x_1x_2^{3}, x_1^{3}x_2^{3}\}}$,
 $\genMClo{\{x_1^2, x_1^{3}x_2^{3}\}}$, and $\genMClo{\{x_1^2x_2^{3}, x_1^{3}x_2^{3}\}}$.
 We have that $x_1^2  \not \in \genMClo{\{x_1x_2^{3}, x_1^{3}x_2^{3}\}}$,
 since $x_1^2$ does not preserve the set $\{1, a\}$ where $a$ is a generator for $(\F_4\setminus\{0\}, \cdot)$, 
 but the operations on $\F_4$ given by $(x_1,x_2) \mapsto x_1x_2^3$ and $(x_1,x_2) \mapsto x_1^3x_2^3$
 preserve the set $\{1,a\}$.
 On the other hand we have that $x_1x_2^3 \not \in \genMClo{\{x_1^2, x_1^{3}x_2^{3}\}}$,
 since all monomials of $\genMClo{\{x_1^2, x_1^{3}x_2^{3}\}}$ of width larger than $1$ have the property that all exponents are $0$ modulo $3$.
Therefore, ${\genMClo{\{x_1x_2^{3}, x_1^{3}x_2^{3}\}} \not \ni x_1^2}$ and 
 $\genMClo{\{x_1^2, x_1^{3}x_2^{3}\}}\not \ni x_1x_2^{3}$ are distinct
 and both monomial clones are strictly contained in $\genMClo{\{x_1^2x_2^{3}, x_1^{3}x_2^{3}\}}$.
 Then the combinations of the generators
 $x_1^2$, $x_1^{3}$, $x_1x_2^{3}$, $x_1^2x_2^{3}$ are left.
 Now we try to find different monomial clones if $x_1^2x_2^{3}$ is a generator.
 We have $x_1^2 \in \genMClo{\{x_1^2x_2^{3}\}}$,
 $x_1x_2^{3}\in \genMClo{\{x_1^2x_2^{3}\}}$,
 and if $x_1^{3}$ and $x_1^2x_2^{3}$ lie in $C$, then $x_1^{3}x_2^{3}\in C$.
 Hence, we just get the monomial clone $\genMClo{\{x_1^2x_2^{3}\}}$ as a new one.
 Now the combinations of the generators $x_1^2$, $x_1^{3}$, $x_1x_2^{3}$ are left.
 If $x_1^{3} \in C$ and $x_1x_2^{3} \in C$, then $x_1^{3}x_2^{3} \in C$, and
 if $x_1^{2} \in C$ and $x_1x_2^{3} \in C$, then $x_1^{2}x_2^{3} \in C$.
 Hence, we only get the monomial clones
 $\genMClo{\{x_1x_2^{3}\}}$, $\genMClo{\{x_1^2\}}$, $\genMClo{\{x_1^{3}\}}$
 and $\genMClo{\{x_1^2, x_1^{3}\}}$, which we have not found yet.
 The smallest monomial clone is given by $\genMClo{\{x_1\}}$.
 Altogether we have found $12$ different monomial clones, which are ordered as given in Figure~\ref{fig:lat4}.
\end{proof}

\section{Connection to semi-affine algebras} \label{sec:conToSemi}

In this section we give a connection of monomial clones to semi-affine algebras.
 First, we recall the definition of a semi-affine algebra (cf.~\cite{S:CUA}).
 Let $\mathbf{A}=(A,+,-,0)$ be an abelian group.
Let $n\in\N_0$ and let $f$ be an $n$-ary operation on $A$.
We call $f$ \emph{affine with respect to $\mathbf{A}$} if $f(u+'v) + f(0,\ldots, 0) = f(u) + f(v)$ for all $u, v\in A^n$,
where $+'$ is the componentwise addition.
We call then an algebra $\mathbf{B} = (A, F)$ \emph{semi-affine with respect to} $\mathbf{A}$
if every $f\in F$ is affine with respect to $\mathbf{A}$.
Furthermore, we say that $f$ is \emph{$0$-preserving} if $f(0,\ldots, 0) = 0$.
Let $F \subseteq \bigcup \{A^{A^i}\mid i\in\N\}$.
We call the algebra $(A, F)$ \emph{$0$-preserving} if
all term functions of $(A, F)$ are $0$-preserving.
Let $C$ be a clone on $A$.
We call $C$ \emph{$0$-preserving semi-affine with respect to} $\mathbf{A}$ if $(A, C)$ is a $0$-preserving and semi-affine algebra with respect to $\mathbf{A}$.
For $a\in \N_0$, we write $[a] \in \Z_{q-1}$ for the equivalence class of $a$ modulo $q-1$.
Let $q$ be a prime power and let $G$ be a set of monomials.
Then we define
$$\varphi(G) := \{f\colon \Z_{q-1}^n \to \Z_{q-1},(y_1, \ldots, y_n) \mapsto \sum_{i=1}^n [r(i)] y_i \mid n \in \N, \prod_{i =1}^n x_i^{r(i)}  \in G \}.$$
If $G$ is a monomial clone on $\F_q$,
then the set $\varphi(G)$ is a $0$-preserving semi-affine clone with respect to $(\Z_{q-1},+,-,0)$.
Note that
for all $C, D \in \monClones{q}$ with $C \subseteq D$ we have $\varphi(C) \subseteq \varphi(D)$.
On the other hand we have that
the monomial clones $\varphi(\genMClo{\{x_1^{q-1} \}})$ and $\varphi(\genMClo{\{x_1^{q-1}x_2^{q-1} \}})$
are both equal to the clone on $\Z_{q-1}$ that is generated by $\{0\}$,
but $\genMClo{\{x_1^{q-1} \}} \subset \genMClo{\{x_1^{q-1}x_2^{q-1} \}}$.
Let $F$ be a set of finitary operations on $\Z_{q-1}$.
We denote the clone generated by $F$ by $\genCloZ{F}{q-1}$.
Let us consider the difference of monomial clones and semi-affine algebras with respect to $(\Z_{q-1},+,-,0)$ in the example $q=3$.
Let $f_0\colon \Z_2 \to \Z_2, x \mapsto x$,
$f_1\colon \Z_2 \to \Z_2, x \mapsto 0$,
$f_2\colon \Z_2^3 \to \Z_2, (x,y,z) \mapsto x+y+z$,
and $f_3\colon \Z_2^2 \to \Z_2, (x, y) \mapsto x+y$.
Then the lattice of $0$-preserving semi-affine clones with respect to $(\Z_{2},+,-,0)$ is given by Figure \ref{fig:latSAA}.

 \begin{figure}[ht!]
 \begin{center}
\begin{tikzpicture}
  \node (a) at (0, 0) {$\genCloZ{\{f_0\}}{2}$};
  \node (b) at (-1.5, 1.5){$\genCloZ{\{f_2\}}{2}$};
  \node (c) at (1.5, 1.5) {$\genCloZ{\{f_1\}}{2}$};
  \node (d) at (0, 3) {$\genCloZ{\{f_3\}}{2}$};

  \draw (a)-- (b)-- (d)
  		(a) -- (c)-- (d);

\end{tikzpicture}\caption{Lattice of $0$-preserving semi-affine clones with respect to $(\Z_{2},+,-,0)$}\label{fig:latSAA}
 \end{center}
\end{figure}
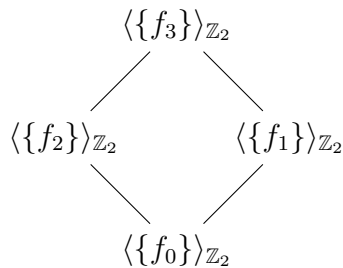

In Propositon \ref{pro:smallMonClon} we find the lattice of monomial clones on $\F_3$ and see that $\lvert \monClones{3} \rvert = 7$.
We will give a connection between
monomial clones on $\F_q$ and $0$-preserving semi-affine algebras with respect to $(\Z_{q-1},+,-,0)$ via $\varphi$ in Proposition~\ref{pro:conSemiAff},
which we use in Section~\ref{sec:InfChains} and Section~\ref{sec:idMonCl}.
If $q=4$ we have given the lattice of monomial clones in Theorem~\ref{pro:latM4}.
From this lattice we can easily derive the lattice of $0$-preserving semi-affine clones with respect to $(\Z_{3},+,-,0)$ given in Figure~\ref{fig:latSAA3}.
For this figure, we use $g_0 \colon \Z_{3}\to \Z_{3}, x \mapsto x$,
$g_1 \colon \Z_{3} \to \Z_{3}, x \mapsto 0$,
$g_2 \colon \Z_{3} \to \Z_{3}, x\mapsto 2x$,
$g_3 \colon \Z_{3}^4 \to \Z_{3}, (x_1, x_2, x_3, x_4)\mapsto x_1 +x_2 +x_3 +x_4$,
and let
$g_4\colon \Z_{3}^2 \to \Z_{3}, (x,y) \mapsto x+y$.

 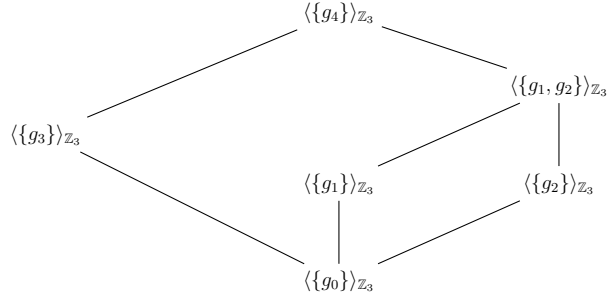
\begin{figure}[ht!]
 \begin{center}
\begin{tikzpicture}[scale=0.65, transform shape]
  \node (a) at (0, -0.5) {$\genCloZ{\{g_0\}}{3}$};
  \node (b) at (-6, 2.5){$\genCloZ{\{g_3\}}{3}$};
  \node (c) at (0, 1.5) {$\genCloZ{\{g_1\}}{3}$};
  \node (d) at (4.5, 1.5) {$\genCloZ{\{g_2\}}{3}$};
  \node (e) at (4.5, 3.5) {$\genCloZ{\{g_1,g_2\}}{3}$};
  \node (f) at (0, 5) {$\genCloZ{\{g_4\}}{3}$};

  \draw (a)-- (b)-- (f)
  		(a) -- (c)-- (e) -- (f)
  		(a) -- (d) -- (e);

\end{tikzpicture}\caption{Lattice of $0$-preserving semi-affine clones with respect to $(\Z_{3},+,-,0)$}\label{fig:latSAA3}
 \end{center}
\end{figure}

\newpage

 Now we investigate some properties of the lattice of monomial clones if $q-1$ is square-free.

\begin{lem}\label{lem:consemcon1}
 Let $q$ be a prime power such that $q-1$ is square-free, let $n\in\N_0$ and let $d, \alpha(1), \ldots, \alpha(n) \in \N$ .
 Let $C$ be a monomial clone on $\F_q$.
 If $x_1^dx_2^d\prod_{i=1}^n x_{2+i}^{\alpha(i)}\in C$,
 then $x_1^dx_2^d(\prod_{i=1}^n x_{2+i}^{\alpha(i)})x_{3+n}^{q-1}\in C$.
\end{lem}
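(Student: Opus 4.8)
The goal is to show that if $x_1^d x_2^d \prod_{i=1}^n x_{2+i}^{\alpha(i)} \in C$, then we may append a factor $x_{3+n}^{q-1}$. My plan is to combine two monomials from $C$: the given one, and a copy of it in which the second variable (one of the two sharing exponent $d$) has been suitably manipulated to produce a fresh variable carrying exponent $q-1$. Concretely, write $m(x_1,\ldots,x_{n+2}) := x_1^d x_2^d \prod_{i=1}^n x_{2+i}^{\alpha(i)}$. First I would identify all of the ``tail'' variables $x_3,\ldots,x_{n+2}$ of one copy of $m$ with a single new variable and identify one of $x_1,x_2$ with another; this, via Lemma~\ref{lem:cutQ1} together with closure under identification and under equivalent monomials, gives some monomial in $C$ of the form $x_1^d x_2^{\gamma}$ for a controllable $\gamma$. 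The crucial point I want to exploit is that $q-1$ is square-free.

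**Using square-freeness.** Since $q-1$ is square-free, write $q-1 = p_1 \cdots p_k$ with distinct primes $p_j$. The exponent $d$ has $\gcd(d, q-1) = e$ for some divisor $e$ of $q-1$, and $d$ is a unit modulo $(q-1)/e$. Because $q-1$ is square-free, $e$ and $(q-1)/e$ are coprime, so by CRT an exponent that is $\equiv 0 \pmod e$ and $\equiv d \pmod{(q-1)/e}$ exists, and more usefully, powers of $d$ modulo $q-1$ cycle through exactly the multiples of $e$ that are units mod $(q-1)/e$. The point is: starting from $x_1^d x_2^d$ in $C$ (obtained by Lemma~\ref{lem:cutQ1} applied to $m$, cutting the $\alpha$-tail after identifying it appropriately — or rather, first I need to produce a ``pure'' $d,d$ monomial, which Lemma~\ref{lem:cutQ1} does not immediately give since $\sum \alpha(i)$ need not be $\equiv 0$).

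So let me reorganize. The cleanest route: substitute into $m$ a copy of $m$ in the second slot. That is, consider $m(x_1, m(x_2, x_3, y,\ldots,y), y', \ldots, y')$ — substituting $m$ into the variable $x_2$ of the outer $m$ and identifying the outer tail with a fresh $y'$ and the inner tail with a fresh $y$. The outer exponents become: $x_1$ gets $d$; the inner monomial gets raised to the power $d$; inside, $x_2$ gets exponent $d$, $x_3$ gets exponent $d$, and $y$ collects $\sum \alpha(i)$; then raising to $d$ multiplies all these by $d$; and $y'$ collects $\sum \alpha(i)$ from the outer tail. Renaming, $C$ contains $x_1^d x_2^{d^2} x_3^{d^2} y^{d\sum\alpha(i)} (y')^{\sum\alpha(i)}$. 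Iterating the inner substitution $t$ times drives the exponent of the ``deep'' variable to $d^{t+1}$ while $x_1$ stays at $d$; choosing $t$ with $d^{t+1} \equiv d \pmod{q-1}$ (possible since the multiplicative semigroup generated by $d$ mod $q-1$ is eventually periodic, and square-freeness makes it land back on $d$ — e.g.\ $t+1 = 1 + \operatorname{ord}$ of $d$ in the unit part) recovers the shape $x_1^d x_2^d (\text{stuff})$ but now with extra variables, some carrying exponents of the form $d^s \sum\alpha(i)$. I then identify the ``stuff'' variables down so that Lemma~\ref{lem:cutQ1} can be applied to kill everything except a single leftover, and arrange (again using that powers of $d$ times sums can be made $\equiv 0 \pmod{q-1}$ after enough identifications, since $q-1 \mid (q-1)\cdot(\text{anything})$) that leftover to have exponent $q-1$.

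**The main obstacle.** The real work is bookkeeping: tracking which exponents appear after the nested substitution and verifying that square-freeness of $q-1$ is exactly what lets me (i) return the exponent on $x_1$ to $d$ while (ii) forcing a surviving variable to exponent $q-1$ rather than to $0$ (the distinction that the $\overline{\,\cdot\,}$ convention was introduced for). I expect the argument to hinge on the identity $\overline{b(q-1)} = q-1$ for $b \in \N$ (used already in Lemma~\ref{lem:q1Allq1}) to manufacture the $q-1$, combined with an arithmetic lemma: for square-free $q-1$ and any $d \in \N$, there is $s \in \N$ with $d^{s+1} \equiv d \pmod{q-1}$. Granting that, the construction above produces a monomial in $C$ of the form $x_1^d x_2^d (\prod_{i=1}^n x_{2+i}^{\alpha(i)}) \cdot (\text{variables with exponents} \equiv 0)$, whence Lemma~\ref{lem:cutQ1} and Lemma~\ref{lem:q1Allq1} finish by showing we can keep exactly one such extra variable, carrying exponent $q-1$, giving $x_1^d x_2^d (\prod_{i=1}^n x_{2+i}^{\alpha(i)}) x_{3+n}^{q-1} \in C$.
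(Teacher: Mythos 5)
Your plan — nest $m$ into itself, use square-freeness to choose the iteration depth so the exponent cycles back to $d$ modulo $q-1$, then identify the surplus variables down to one carrying exponent $q-1$ — is precisely the route the paper takes, but the sketch has two concrete gaps. First, in the illustrated step $m(x_1, m(x_2, x_3, y,\ldots,y), y',\ldots,y')$ you identify the \emph{outer} tail to a single $y'$, so already after one step the monomial contains $(y')^{\sum_i\alpha(i)}$ but no longer a block with the distinct exponents $\alpha(1),\ldots,\alpha(n)$. The claimed conclusion that the construction produces a monomial of the form $x_1^d x_2^d \bigl(\prod_{i=1}^n x_{2+i}^{\alpha(i)}\bigr)\cdot(\text{extra})$ therefore does not follow from what you built; you must keep all tail variables fresh throughout the induction (the paper's $m_k$ carries $kn$ distinct tail variables) and only identify at the very end, retaining one full tail level intact.

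Second, the arithmetic fact you isolate — some $s$ with $d^{s+1}\equiv d \pmod{q-1}$ — is necessary but not sufficient. You also need $\sum_{j=1}^{s} d^j \equiv 0 \pmod{q-1}$; this is what makes the exponents of the discarded variables (including the intermediate levels $d^2,\ldots,d^{s}$, the second deep variable, and the higher tail levels $d^j\alpha(i)$) sum to a \emph{positive multiple of} $q-1$, so that after identification the leftover variable really gets $\overline{\,\cdot\,}=q-1$ rather than some other residue. The minimal $s$ with $d^{s+1}\equiv d$ does not guarantee this second congruence. Both hold simultaneously with $s=\phi(d'')\cdot(q-1)$, where $d''=(q-1)/\gcd(d,q-1)$, which is the paper's $k'-1$; your parenthetical about $q-1\mid(q-1)\cdot(\text{anything})$ gestures at exactly this $(q-1)$-scaling but needs to be pinned down. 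You also do not address the degenerate case $n=0$ (no tail at all), which the paper handles by a separate, simpler identification.
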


\begin{proof}
 Let $m(x_1, \ldots, x_{n+2}) := x_1^dx_2^d\prod_{i=1}^n x_{2+i}^{\alpha(i)}$.
 We show by induction that for all $k\in \N$,
 we have
 $$m_k=(\prod_{j=1}^k x_j^{d^j}) x_{k+1}^{d^k} (\prod_{j=1}^k \prod_{i=1}^n x_{(j-1) n+i + k + 1 }^{d^{k-j} \cdot \alpha(i)}) \in C.$$
For $k=1$, we have $m_1=m \in C$.
Now we assume that $k>1$ and the induction hypothesis holds for $k-1$.
This means
$$m_{k-1}(x_1, \ldots, x_{(n+1)(k-1)+1}) = (\prod_{j=1}^{k-1} x_j^{d^j}) x_{k}^{d^{k-1}} (\prod_{j=1}^{k-1}\prod_{i=1}^n  x_{(j-1)n+i + k }^{d^{k-1-j} \cdot \alpha(i)})  \in C.$$
By substitution of monomials we obtain
\begin{align*}
 m(x_1, &m_{k-1}(x_2, \ldots, x_{(n+1)(k-1)+2}), x_{(n+1)(k-1)+3},  \ldots, x_{(n+1)k +1} ) \\
 &=
 x_1^d \left((\prod_{j=1}^{k-1} x_{j+1}^{d^j}) x_{k+1}^{d^{k-1}} (\prod_{j=1}^{k-1}\prod_{i=1}^n  x_{(j-1) n+i + k +1}^{d^{k-1-j} \cdot \alpha(i)})\right)^d (\prod_{i=1}^n x_{(n+1)(k-1)+i+2}^{\alpha(i)})\\
  &= x_1^d (\prod_{j=1}^{k-1} x_{j+1}^{d^{j+1}}) x_{k+1}^{d^{k}} (\prod_{j=1}^{k-1}\prod_{i=1}^n  x_{(j-1) n+i + k +1}^{d^{k-j} \cdot \alpha(i)}) (\prod_{i=1}^n x_{(k-1)n+i+k+1}^{\alpha(i)})\\
 &= (\prod_{j=1}^k x_j^{d^j}) x_{k+1}^{d^k} (\prod_{j=1}^k \prod_{i=1}^n x_{(j-1) n+i + k + 1 }^{d^{k-j} \cdot \alpha(i)}) \in C,
\end{align*}
 which concludes the induction step.
  Let $d' := \gcd(d, q-1)$ and let $d'' := \frac{q-1}{d'}$.
 Moreover, let $k':= \phi(d'')\cdot (q-1) + 1$, where $\phi$ denotes Euler's totient function.
 
 Case 1. We assume $n>0$:
 In $m_{k'}$ we set $x_1, \ldots, x_{k'},$ to $x_1$
 and $x_{k'+1}$ to $x_2$.
 Now we set the last $n$ variables.
 To this end, the $x_i$ with $i> (k'-1)n+k'+1$
 are set by
 $x_{(k'-1)n+k'+1+l}:= x_{2+l}$ for all $l\in \{1,\ldots, n\}$.
 For those $i$ with $k'+1<i\leq (k'-1)n+k'+1$,
 we set $x_i:= x_{3+n}$.
Hence, we get 
 \begin{align*}
  m'= x_1^{\sum_{j=1}^{k'} d^{j}} x_2^{d^{k'}} x_{3+n}^{\sum_{j=1}^{k'-1} \sum_{i=1}^n  d^{k'-j} \cdot \alpha(i)} \prod_{i=1}^n x_{2+i}^{\alpha(i)} \in C.
 \end{align*}
 Since $q-1$ is square-free,
 we have  $\gcd(d, \frac{q-1}{d'}) = 1$,
 and thus
 $d^{\phi(d'')} \equiv_{d''} 1$, since $d'' =\frac{q-1}{d'}$.
 Hence $d^{\phi(d'')+1}\equiv_{q-1} d$,
 and thus
 if $k, l \in \N$ with $k \equiv_{\phi(d'')} l$, then
 $d^k \equiv_{q-1} d^l$.
 Therefore we have $\overline{d^{k'}}=\overline{d^{\phi(d'')\cdot (q-1) + 1}} = \overline{d}$,
 $\overline{\sum_{j=1}^{k'} d^{j}} 
 = \overline{\sum_{j=1}^{\phi(d'')\cdot (q-1) + 1} d^{j}}
 = (\overline{\sum_{j=1}^{\phi(d'')\cdot (q-1)} d^{j}) + d}
 = \overline{(q-1)\sum_{j=1}^{\phi(d'')} d^{j} + d}  = \overline{d}$,
and 
 \begin{align*}\overline{\sum_{j=1}^{k'-1} \sum_{i=1}^n  d^{k'-j} \cdot \alpha(i)}   
 &=\overline{\sum_{i=1}^n \alpha(i) \sum_{j=1}^{k'-1}d^j}
 = \overline{\sum_{i=1}^n \alpha(i)(q-1) \sum_{j=1}^{\phi(d'')} d^j}\\
 &= \overline{(q-1)\underbrace{\sum_{i=1}^n \alpha(i)\sum_{j=1}^{\phi(d'')}d^j}_{\neq 0}}
 = q-1.
 \end{align*}
  Hence we have 
  $x_1^dx_2^d (\prod_{i=1}^n x_{2+i}^{\alpha(i)})x_{3+n}^{q-1}\in C$, and thus the statement is true for $n>0$.
  
 Case 2. We assume that $n=0$:
  Now in $m_{k'}$ we set $x_1, \ldots, x_{k'-1}$ to $x_3$,
  $x_{k'}$ to $x_1$,
  and $x_{k'+1}$ to $x_2$.
 Then $x_1^{d^{k'}}x_2^{d^{k'}}x_3^{a}  \in C$ where $a = \sum_{i=1}^{k'-1} d^i$.
 Similarly as in Case 1, we have $\overline{d^{k'}} = \overline{d}$ and $\overline{a} = \overline{\sum_{i=1}^{k'-1} d^i} = \overline{\sum_{i=1}^{\phi(d'')\cdot (q-1)} d^i} =
 \overline{(q-1)\sum_{i=1}^{\phi(d'')} d^i} = q-1$.
\end{proof}

\begin{lem}\label{lem:consemcon2}
Let $q$ be a prime power such that $q-1$ is square-free.
Let $C$ be a $0$-preserving semi-affine clone with respect to $(\Z_{q-1},+,-,0)$.
Then the function 
\begin{align*}
 f \colon \{G \in \monClones{q} \mid \varphi(G) = C \} &\to \{G \cap \F_q[x_1, \ldots, x_q]\mid G\in \monClones{q}, \varphi(G) = C\}\\
 G&\mapsto f(G) := G \cap \F_q[x_1, \ldots, x_q]
 \end{align*}
is injective.
\end{lem}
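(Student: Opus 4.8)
The plan is to prove injectivity directly. So let $G_1,G_2\in\monClones{q}$ satisfy $\varphi(G_1)=\varphi(G_2)=C$ and $G_1\cap\F_q[x_1,\dots,x_q]=G_2\cap\F_q[x_1,\dots,x_q]$; I want $G_1=G_2$. Since these hypotheses are symmetric in $G_1$ and $G_2$, it suffices to show $G_1\subseteq G_2$, and since a monomial clone is closed under equivalent monomials and is determined by its reduced monomials, it suffices to take a reduced monomial $m=\prod_{i=1}^n x_i^{\alpha(i)}\in G_1$ (every $\alpha(i)\in\{1,\dots,q-1\}$) and show $m\in G_2$. If $n\le q$ then $m\in\F_q[x_1,\dots,x_q]$, hence $m\in G_1\cap\F_q[x_1,\dots,x_q]=G_2\cap\F_q[x_1,\dots,x_q]\subseteq G_2$ and we are done; so assume $n>q$.

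The heart of the argument is a normalization step valid for an arbitrary monomial clone $G$. Write $T=\{i:\alpha(i)=q-1\}$, let $S=\{1,\dots,n\}\setminus T$ (the positions with exponent $\le q-2$), set $s=|S|$, and after permuting variables assume $S=\{1,\dots,s\}$; put $\mu=\prod_{i=1}^s x_i^{\alpha(i)}$. I claim $m\in G$ if and only if $m^{\sharp}\in G$, where
$$m^{\sharp}=\begin{cases}x_1^{q-1}x_2^{q-1}&\text{if }s=0,\\ \mu\cdot x_{s+1}^{q-1}&\text{if }1\le s<q,\\ \mu&\text{if }s\ge q.\end{cases}$$
The forward implication deletes the exponent-$(q-1)$ factors in one application of Lemma~\ref{lem:cutQ1} (choosing $D$ to be all, or all but one, of the relevant indices), or is trivial when $m=\mu$ already. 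For the backward implication, in the cases $s<q$ the witness $m^{\sharp}$ already carries a factor $x^{q-1}$, so Lemma~\ref{lem:q1Allq1} produces all further exponent-$(q-1)$ factors, and with them $m$. The one delicate case is $s\ge q$ with $T\neq\0$, where a first factor $x^{q-1}$ must be manufactured out of $\mu$ alone: here I would use that $s\ge q>q-2$ forces, by the pigeonhole principle, two of $\alpha(1),\dots,\alpha(s)$ to coincide, so that Lemma~\ref{lem:consemcon1} applies and gives $\mu\cdot x_{s+1}^{q-1}\in G$, after which Lemma~\ref{lem:q1Allq1} finishes as before. This is the one point where square-freeness of $q-1$ is genuinely used.

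It remains to decide membership of $m^{\sharp}$, and here $G_1$ and $G_2$ re-enter. If $s<q$, then $m^{\sharp}$ has width at most $q$; since monomial clones are closed under permuting variables, $m^{\sharp}\in G_j$ is equivalent to $m^{\sharp}\in G_j\cap\F_q[x_1,\dots,x_q]=f(G_j)$, so $m^{\sharp}\in G_1\iff m^{\sharp}\in G_2$ by the hypothesis $f(G_1)=f(G_2)$. If $s\ge q$, then $m^{\sharp}=\mu$ has all exponents in $\{1,\dots,q-2\}$, and this is where $\varphi$ enters: if $\mu\in G_1$, then the map $(y_1,\dots,y_s)\mapsto\sum_{i=1}^s[\alpha(i)]y_i$ lies in $\varphi(G_1)=\varphi(G_2)$, hence is induced by a monomial of $G_2$; since that induced map has arity $s$ and all its coefficients $[\alpha(i)]$ are nonzero in $\Z_{q-1}$, the monomial must be of the form $\prod_{i=1}^s x_i^{r(i)}$ with $[r(i)]=[\alpha(i)]$, i.e.\ $\overline{r(i)}=\alpha(i)$, for every $i$; thus this monomial is equivalent to $\mu$, whence $\mu\in G_2$. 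The reverse direction is symmetric. Chaining everything, $m\in G_1\iff m^{\sharp}\in G_1\iff m^{\sharp}\in G_2\iff m\in G_2$; hence $G_1\subseteq G_2$, and by symmetry $G_1=G_2$.

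I expect the main obstacle to be the bookkeeping in the normalization step: getting the index arithmetic of Lemmas~\ref{lem:cutQ1}, \ref{lem:q1Allq1} and especially \ref{lem:consemcon1} to line up (the correctly indexed appended variable $x_{s+1}^{q-1}$, the pigeonhole input that feeds Lemma~\ref{lem:consemcon1}, and the verification that $m^{\sharp}$ lies in $\F_q[x_1,\dots,x_q]$ precisely when $s<q$). The semi-affine part for $s\ge q$ is then essentially a one-line consequence of the definition of $\varphi$ together with the remark that $\varphi$ loses no information about exponents that are nonzero modulo $q-1$.
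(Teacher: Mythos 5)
The proposal is correct and takes essentially the same approach as the paper: both normalize a monomial by stripping and regrowing the $x^{q-1}$ factors via Lemmas~\ref{lem:cutQ1} and~\ref{lem:q1Allq1}, both invoke Lemma~\ref{lem:consemcon1} together with the pigeonhole principle when there are at least $q$ exponents in $\{1,\dots,q-2\}$ (the point where square-freeness enters), and both use $\varphi$ to compare the clones. The only difference is organizational: the paper argues contrapositively, producing a width-$\le q$ monomial in $G'\setminus G$ (and in passing showing $f$ is an order embedding, not just injective), whereas you argue directly via the normalization $m\mapsto m^{\sharp}$ and isolate the role of $\varphi$ as deciding membership of the reduced part $\mu$ — but the key lemmas and the underlying idea are identical.
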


\begin{proof}
We suppose that $G'\not \subseteq G$ and $\varphi(G) = \varphi(G')$.
In order to prove the statement, we show that $f(G') \not \subseteq f(G)$.
If $q=2$, we have by Proposition~\ref{pro:smallMonClon} that $G=\genMClo{\{x_1\}}$ and $G' = \genMClo{\{x_1x_2\}}$.
Then $G' \cap \F_q[x_1, x_2] \not \subseteq G \cap \F_q[x_1, x_2]$.
Let $q>2$ and let $m' \in G'\setminus G$.
By permuting variables and the closure under equivalent monomials
we assume that $m'$ is of the form $m' = (\prod_{i=1}^n x_i^{\alpha(i)}) (\prod_{j=1}^{l_1} x_{n+j}^{q-1})$ 
with $n, l_1 \geq 0$ and for all $i\leq n$, $\alpha(i) \in \{1,\ldots, q-2\}$. 
Since $\varphi(G) = \varphi(G')$,
there is a monomial
 $(\prod_{i=1}^{n+l_1} x_i^{\beta(i)}) \in G$
 where $\beta(i) \equiv_{q-1} \alpha(i) $ for each $i\leq n$
 and $\beta(i) \equiv_{q-1} 0$ for each $i\in \{n+1, \ldots, n+l_1\}$.
 Since $G$ is closed under equivalent monomials, 
 we have 
 $(\prod_{i=1}^{n} x_i^{\alpha(i)}) (\prod_{j=1}^{l_1} x_{n+j}^{\overline{\beta(n+j)}}) \in G$.
Permuting variables yields $m = (\prod_{i=1}^n x_i^{\alpha(i)})(\prod_{j=1}^{l_2} x_{n+j}^{q-1}) \in G$ for some $l_2 \in \N_0$, $l_2 < l_1$.

Case 1. We assume that $n = 0$. 
By Lemma~\ref{lem:q1Allq1} a monomial clone $D$ contains $x_1^{q-1}x_2^{q-1}$ if and only if
for all $l\in\N$, $\prod_{i=1}^l x_i^{q-1}$ lies in $D$.
Since $l_1,l_2>0$ and $m' \in G'\setminus G$, we have $x_1^{q-1}x_2^{q-1} \in G'\setminus G$,
and thus
$G' \cap \F_q[x_1, \ldots, x_q] \not \subseteq  G \cap \F_q[x_1, \ldots, x_q]$.
Hence $f(G') \not \subseteq f(G)$.

Case 2.
We assume that $n > 0$.
By Lemma~\ref{lem:q1Allq1} we have that
$(\prod_{i=1}^n x_i^{\alpha(i)})x_{n+1}^{q-1}$ lies in a monomial clone $D$ if and only if
for all $l \in \N_0$,
$(\prod_{i=1}^n x_i^{\alpha(i)})(\prod_{j=1}^l x_{n+j}^{q-1})$ lies in $D$.
Since $m\in G$ and $m' \in G'\setminus G$, we have $(\prod_{i=1}^n x_i^{\alpha(i)})x_{n+1}^{q-1} \in G'\setminus G$ and 
$m= \prod_{i=1}^n x_i^{\alpha(i)}$.
If $n \geq q$, then there occurs one of the exponents of $m=\prod_{i=1}^n x_i^{\alpha(i)}$ at least twice,
and by permuting variables and Lemma~\ref{lem:consemcon1} we get that $m\cdot x_{n+1}^{q-1}\in G$, a contradiction.
If $n<q$, we have 
 $(\prod_{i=1}^n x_i^{\alpha(i)})x_{n+1}^{q-1} \in G' \cap \F_q[x_1, \ldots, x_q] \setminus G$.
Hence $f(G') \not \subseteq f(G)$.
\end{proof}

\begin{pro}\label{pro:conSemiAff}
 Let $q$ be a prime power.
 Then $\varphi$ is a surjective map from the monomial clones on $\F_q$ to the set of $0$-preserving semi-affine clones with respect to $(\Z_{q-1},+,-,0)$.
 Furthermore the following hold:
 Let $C$ be a $0$-preserving semi-affine clone with respect to $(\Z_{q-1},+,-,0)$.
 \begin{enumerate}
  \item \label{en:conSem1} If $C = \genCloZ{\emptyset}{q-1}$, then $\{G \in \monClones{q}\mid \varphi(G) = C\} = \{\genMClo{\{x_1\}}, \genMClo{\{x_1x_2^{q-1}\}} \}$.
  \item \label{en:conSem2} If for all $G \in \monClones{q}$ with $\varphi(G) = C$ we have $x_1x_2^{q-1} \in G$,
 then we have that $\lvert \{G \in \monClones{q}\mid \varphi(G) = C\} \rvert = 1$.
 \item \label{en:conSem3} If $q-1$ is square-free, then  $\{G \in \monClones{q}\mid \varphi(G) = C\}$ is finite.
 \end{enumerate}
\end{pro}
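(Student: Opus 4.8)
The plan is to establish surjectivity of $\varphi$ by exhibiting a canonical preimage for each target clone, and then to handle the three items using, in order, an explicit description of the monomial clones lying over $\genCloZ{\emptyset}{q-1}$, a substitution trick built from the monomial $x_1x_2^{q-1}$, and the already-established Lemma~\ref{lem:consemcon2}. For items (1) and (2) I may assume $q\ge 3$, since when $q=2$ they follow at once from Proposition~\ref{pro:smallMonClon} (there $\monClones{2}=\{\genMClo{\{x_1\}},\genMClo{\{x_1x_2\}}\}$ and $x_1x_2=x_1x_2^{q-1}$).

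For surjectivity: every operation of a $0$-preserving semi-affine clone $C$ on $\Z_{q-1}$ is a $0$-preserving affine map, hence a homomorphism $\Z_{q-1}^n\to\Z_{q-1}$, so it has the form $(y_1,\dots,y_n)\mapsto\sum_{i=1}^n c_iy_i$. Given $C$, I would set
\[
 \widehat G:=\Bigl\{\,\prod_{i=1}^n x_i^{r(i)}\ \Bigm|\ n\in\N,\ r(1),\dots,r(n)\in\N,\ \bigl((y_1,\dots,y_n)\mapsto\sum_{i=1}^n[r(i)]y_i\bigr)\in C\,\Bigr\},
\]
and check that $\widehat G$ is a monomial clone: it contains all $x_i$ since $C$ contains the projections; it is closed under equivalent monomials since $[a]$ depends only on the $\equiv$-class of $a$; and it is closed under substitution since the linear map attached to $m(m_1,\dots,m_n)=\prod_s x_s^{\sum_i\alpha(i)\beta_i(s)}$ is the composite in $C$ of those attached to $m,m_1,\dots,m_n$. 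Then $\varphi(\widehat G)\subseteq C$ by construction, and any $f=((y_1,\dots,y_n)\mapsto\sum c_iy_i)\in C$ equals $\varphi$ of $\prod_{i=1}^n x_i^{r(i)}$, taking $r(i)$ to be the representative of $c_i$ in $\{1,\dots,q-2\}$ when $c_i\neq 0$ and $r(i)=q-1$ when $c_i=0$; hence $\varphi(\widehat G)=C$. In fact $\widehat G$ is the largest monomial clone over $C$, since $\varphi(G)\subseteq C$ forces $G\subseteq\widehat G$.

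For (1) and (2): a $\Z_{q-1}$-linear map is a projection exactly when one coefficient is $1$ and all others are $0$, so for $q\ge3$ one has $\varphi(G)=\genCloZ{\emptyset}{q-1}$ iff every monomial of $\overline G$ has exactly one exponent equal to $1$ and all remaining exponents equal to $q-1$; this holds for $\genMClo{\{x_1\}}$ and, via Lemma~\ref{lem:q1Allq1}, for $\genMClo{\{x_1x_2^{q-1}\}}$. Conversely, if $\varphi(G)=\genCloZ{\emptyset}{q-1}$ and $G\neq\genMClo{\{x_1\}}$, then $G$ contains a monomial of width $\ge 2$; deleting its surplus $(q-1)$-exponent variables (Lemma~\ref{lem:cutQ1}) and permuting gives $x_1x_2^{q-1}\in G$, whence $\genMClo{\{x_1x_2^{q-1}\}}\subseteq G$, and since every monomial of $G$ also lies in $\genMClo{\{x_1x_2^{q-1}\}}$ by Lemma~\ref{lem:q1Allq1} we get $G=\genMClo{\{x_1x_2^{q-1}\}}$, proving (1). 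For (2), the device is: if $x_1x_2^{q-1}\in G$, then substituting a monomial $\mu\in G$ and a fresh variable into the two arguments of $x_1x_2^{q-1}$ gives $\mu\cdot x^{q-1}\in G$, so $\mu$ times any number of fresh $(q-1)$-th powers lies in $G$. Now let $G,G'$ lie over $C$, both containing $x_1x_2^{q-1}$ by hypothesis (the fibre is nonempty by surjectivity); it suffices to show $G'\subseteq G$. Given $m'\in G'$, pick $m\in G$ with the same exponent $\bmod\,q-1$ at every variable as $m'$; then $m,m'$ share the same variables of exponent $\not\equiv_{q-1}0$, with equal exponents there, and differ only in the number of variables of exponent $q-1$, so after permuting and using closure under equivalence we may write $m=(\prod_{i=1}^k x_i^{\alpha(i)})(\prod_{j=1}^{l}x_{k+j}^{q-1})$ and $m'=(\prod_{i=1}^k x_i^{\alpha(i)})(\prod_{j=1}^{l'}x_{k+j}^{q-1})$. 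If $l'\le l$, then $m'$ arises from $m$ by Lemma~\ref{lem:cutQ1}; if $l'>l$, then $m'$ arises from $m$ by applying the device $l'-l$ times. Either way $m'\in G$, so $G'\subseteq G$, and by symmetry $G=G'$; hence the fibre is a singleton.

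For (3) and the main obstacle: by Lemma~\ref{lem:consemcon2} the map $G\mapsto G\cap\F_q[x_1,\dots,x_q]$ is injective on $\{G\in\monClones{q}\mid\varphi(G)=C\}$; since a monomial clone is closed under equivalent monomials, $G\cap\F_q[x_1,\dots,x_q]$ is a union of $\equiv$-classes of monomials in $x_1,\dots,x_q$, of which there are only finitely many (at most $q^q$), so only finitely many such sets occur, and injectivity makes the fibre finite. The step I expect to be most delicate is the bookkeeping in (1) and (2): fixing the normal form of a monomial with a prescribed $\Z_{q-1}$-linear map — the split into ``essential'' variables (exponents in $\{1,\dots,q-2\}$) and variables of exponent $q-1$ or absent — and checking that Lemma~\ref{lem:cutQ1} (to remove $(q-1)$-powers) together with the $x_1x_2^{q-1}$-substitution device (to add them) connect any two monomials with the same associated map. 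By contrast (3) is short, its substance being packaged already in Lemmas~\ref{lem:consemcon1} and~\ref{lem:consemcon2}.
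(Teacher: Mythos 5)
Your argument is correct and takes essentially the same approach as the paper: constructing the canonical largest preimage, using Lemma~\ref{lem:q1Allq1} for item~(1), the $x_1x_2^{q-1}$-substitution device together with Lemma~\ref{lem:q1Allq1} for item~(2), and Lemma~\ref{lem:consemcon2} with the finiteness of $\equiv$-classes in $\F_q[x_1,\dots,x_q]$ for item~(3). One small repair: in the definition of $\widehat G$ you should allow $r(i)=0$ (requiring only that some $r(j)\neq 0$, exactly as the paper defines its $G'$), since with all $r(i)\in\N$ the set $\widehat G$ fails closure under composition with projections — e.g.\ substituting $x_3$ for $x_2$ in $x_1x_2$ yields the monomial $x_1x_3$, whose exponent in position $2$ is $0$.
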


\begin{proof}
 Let $C$ be a $0$-preserving semi-affine clone with respect to $(\Z_{q-1},+,-,0)$.
 Now we define
 \begin{align*}
 G' := \{&\prod_{i=1}^n x_i^{r(i)}\mid ((y_1, \ldots, y_n) \mapsto \sum_{i=1}^n [r(i)] y_i)\in C,\exists i\leq n\colon r(i) \neq  0 \}.
 \end{align*}
 The condition ($\exists i\leq n\colon r(i) \neq  0 $) is required
 to exclude $x_1^0 = 1$ as a monomial in $G'$.
 
 First, we show that $\varphi$ is surjective.
 We have that $C$ is a $0$-preserving semi-affine clone with respect to $(\Z_{q-1},+,-,0)$.
 Therefore we see that $\genMClo{\{x_1\}} \subseteq G'$,
 $G'$ is closed under substitution of monomials,
 and $G'$ is closed under equivalent monomials.
 Hence, $G'$ is a monomial clone on $\F_q$.
 Furthermore, we have $\varphi(G') = C$ and $G'$ is the largest $G \in \monClones{q}$ such that $\varphi(G) = C$.

 Now we show the second part of the statement
 and start with Item \eqref{en:conSem1}.
  If $C = \genCloZ{\emptyset}{q-1}$, then $C$ is the clone of projections.
  Then we see by Lemma~\ref{lem:q1Allq1} that 
  $\{G \in \monClones{q}\mid \varphi(G) = C\} = \{\genMClo{\{x_1\}}, \genMClo{\{x_1x_2^{q-1}\}} \}$.
 For Item \eqref{en:conSem2} we observe the following:
 Let $G \in \monClones{q}$ with $\varphi(G) = C$.
 Now let $m = \prod_{i=1}^n x_i^{\alpha(i)}\in G$.
 Since $m' = x_1x_2^{q-1}\in G$
 we get $m'(m, x_{n+1}) = (\prod_{i=1}^n x_i^{\alpha(i)}) x_{n+1}^{q-1}\in G$.
 Now we see by Lemma~\ref{lem:q1Allq1} and by identifying variables
 that $G = G'$.
 Now we show Item~\eqref{en:conSem3}.
 There are only finitely many different non-equivalent monomials in $\F_q[x_1, \ldots, x_q]$.
 Therefore, we get that the co-domain of the function $f$ of Lemma~\ref{lem:consemcon2} is finite.
 By Lemma~\ref{lem:consemcon2} we know that $f$ is injective, and thus
 the domain
 $\{G \in \monClones{q}\mid \varphi(G) = C\}$ of $f$ is finite.
\end{proof}

Let $q$ be a prime power.
Concluding this section, we note
 that the investigation of the lattice of monomial clones on $\F_q$ is equivalent
to the problem of finding the subclones of the clone of the algebra
$(\F_q, \cdot)$,
which is isomorphic to $(\Z_{q-1}\cup \{-\infty\}, +)$, where we extend the operation $+$ on $\Z_{q-1}$ by
$(-\infty) + b = b + (-\infty) = -\infty$
for all $b\in \Z_{q-1}\cup \{-\infty\}$.

\section{The top and the bottom of the lattice of monomial clones}\label{sec:TopBot}

Let $q>2$ be a prime power.
From \cite{MP:TCCSM} we know that the lattice of monomial clones where the clones are generated by one single
binary monomial $x_1x_2^b$ is isomorphic to the divisor lattice of $q-1$.
We already see in Figure \ref{fig:pM1} for $q=3$ that the set $\{ \genMClo{\{x_1x_2^b\}} \mid b \textmd{ divides } q-1 \}$
is not equal to the interval $[\genMClo{\{x_1x_2^2\}}, \genMClo{\{x_1x_2\}}]$ in the lattice of monomial clones on $\F_q$.
In this section we consider the top and the bottom of the lattice of monomial clones on $\F_q$ for any prime power $q>2$.
We start to describe the atoms of the lattice and then we continue with the coatoms.

\subsection{Atoms}

Investigations on the atoms of the lattice of monomial clones generated by a unary function can be found in \cite{MP:TCCSM}.
We will use one of their results in Corollary~\ref{cor:atoms}.

\begin{lem}\label{lem:smIdClone}
  Let $q$ be a prime power
  and let $C \neq \genMClo{\{x_1\}} $ be an idempotent monomial clone on $\F_q$.
  Then $C$ contains $x_1x_2^{q-1}$.
\end{lem}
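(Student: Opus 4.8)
The plan is to produce, from an arbitrary idempotent monomial clone $C \neq \genMClo{\{x_1\}}$, a monomial of the form $x_1 x_2^{q-1}$, i.e.\ a monomial with one exponent $\equiv_{q-1} 1$ and one exponent $\equiv_{q-1} 0$ (reduced to $q-1$). Since $C \neq \genMClo{\{x_1\}}$, there is a monomial $m = x_1^{\alpha(1)}\cdots x_n^{\alpha(n)} \in C$ with $n \geq 2$ (after permuting variables we may take the nontrivial variables to be consecutive) and not all $\alpha(i)\equiv 1$ in a trivial way; by idempotence $\sum_{i=1}^n \alpha(i) \equiv_{q-1} 1$. First I would reduce to the case of a \emph{binary} generator: by identifying variables $x_2,\dots,x_n$ with $x_2$ in $m$, we obtain $x_1^{\alpha(1)} x_2^{\beta} \in C$ where $\beta = \sum_{i=2}^n \alpha(i)$, and since $\alpha(1)+\beta \equiv_{q-1} 1$ we have $\overline{\beta} = \overline{1 - \alpha(1)}$. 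If $\overline{\beta} = q-1$ we are essentially done after possibly massaging $\alpha(1)$; so the interesting case is a genuine binary idempotent monomial $x_1^{a} x_2^{b}$ with $a + b \equiv_{q-1} 1$ and $a,b \in \{1,\dots,q-1\}$, not equal to the projection $x_1$ (so $b \neq q-1$, equivalently $a \neq 1 \bmod q-1$... one must be a little careful here since $x_1^{q} = x_1$ gives $a=1$ possible too).

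The main engine will be to \emph{iterate the binary monomial against itself} to control the exponent of one variable while keeping idempotence. Concretely, from $x_1^a x_2^b \in C$, substituting this monomial into its own first variable repeatedly and then identifying variables produces monomials $x_1^{a^t} x_2^{c_t}$ with $a^t + c_t \equiv_{q-1} 1$ for every $t \in \N$ (this is the same computation pattern as in Lemma~\ref{lem:oneGcdq1Is1} and Lemma~\ref{lem:consemcon1}). If $\gcd(a, q-1) = 1$, choosing $t = \phi(q-1)$ gives $\overline{a^t} = 1$, hence $x_1 x_2^{c} \in C$ with $1 + c \equiv_{q-1} 1$, i.e.\ $\overline{c} = q-1$, which is exactly $x_1 x_2^{q-1} \in C$. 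The remaining obstacle, and the part I expect to be hardest, is the case $d := \gcd(a, q-1) > 1$: then $a^t$ can never be made $\equiv_{q-1} 1$. Here I would instead iterate to stabilize $a^t$ to an idempotent of the monoid $(\Z_{q-1}, \cdot)$, namely pick $t$ with $a^{2t} \equiv_{q-1} a^t =: e$; writing $d' = \gcd(d, (q-1)/d)$ and exploiting that, away from the prime factors of $d$, $a$ is invertible, one gets $\overline{a^t}$ equal to the idempotent $e$ supported exactly on the prime-power components of $q-1$ dividing some power of $a$. Then $x_1^{e} x_2^{c} \in C$ with $e + c \equiv_{q-1} 1$. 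Now apply the same argument to the \emph{other} variable, or combine two such monomials: composing $x_1^e x_2^c$ appropriately with another copy lets one add exponents and, using that $e$ is idempotent and $c \equiv_{q-1} 1 - e$, push toward a monomial with one exponent $1$. The cleanest route is probably: once we have any idempotent monomial $x_1^e x_2^c \in C$ with $e$ idempotent in $(\Z_{q-1},\cdot)$ and $e \neq 1$, compose to get $x_1^{e} x_2^{c} x_3^{c}\cdots$ type monomials and then identify variables back down; since $e$ is idempotent, $e \cdot e = e$, and one shows the first exponent stays $e$ while the others sum to $1 - e$, and one can redistribute to land on exponent $1$ in some variable because $\gcd$ of the available increments with $q-1$ divides $1-e$ in the relevant components.

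Actually the slickest approach, which I would try first, is to invoke Lemma~\ref{lem:x1xkThenAllK} or the results on binary idempotent clones from \cite{GKS:MC} and \cite{MP:TCCSM}: any nontrivial binary idempotent monomial $x_1 x_2^b$ with $b$ a divisor-like exponent generates, after one composition $x_1 x_2^b$ into itself giving $x_1 x_2^b x_3^{b^2 - b}$ or similar, eventually a monomial $x_1 x_2^{q-1}$ because $(b-1)$ and $(q-1)$ interact to produce a $0$ exponent. More systematically: from $x_1^a x_2^b$, the monomial $(x_1^a x_2^b)(x_1, x_1^a x_2^b \text{ in slot } 2)$ after identification gives an exponent on a fresh variable equal to $b \cdot (\text{something}) $, and since $a + b \equiv 1$, one can arrange the fresh exponent to be $b(a-1) \cdot k$ for any $k$; as $b(a-1)$ already kills the components where $a \equiv 1$... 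I would work out that the set of achievable "extra" exponents on a padding variable is exactly the subgroup-ideal generated by $b$ and $a-1$ in $\Z_{q-1}$, whose saturation contains $q-1 \cdot (\text{unit})$ under the monoid action, giving $x_1^a x_2^b x_3^{q-1} \in C$; then Lemma~\ref{lem:cutQ1} collapses $x_2^b x_3^{q-1}$... no, $b + (q-1) \not\equiv 0$. Instead, having $x_1 x_2^{q-1}$ as a sub-pattern after the $\gcd(a,q-1)=1$ reduction handles everything, so the real content is entirely the $\gcd > 1$ case, and I would spend the bulk of the proof there, using the idempotent-power trick plus one more composition-and-identification to convert an idempotent-exponent monomial into $x_1 x_2^{q-1}$, the key point being that for an idempotent $e \in \Z_{q-1}$ with $e \neq 1$, the complementary idempotent $1-e$ restricted to its support gives $\gcd$ conditions forcing a $0$ exponent to appear. $\qed$
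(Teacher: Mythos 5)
Your reduction to a binary idempotent monomial $x_1^{\overline{\alpha(1)}} x_2^{q-\overline{\alpha(1)}}$ by identifying all variables $x_3,\dots,x_n$ with $x_2$ is exactly the first step of the paper's proof. The paper then simply cites \cite[Theorem 2.4]{GKS:MC} to conclude that $x_1x_2^{q-1}$ lies in the clone generated by any such binary idempotent monomial, and stops. You instead try to prove that binary step directly. Your argument in the case $\gcd(a,q-1)=1$ is correct and is essentially Lemma~\ref{lem:oneGcdq1Is1} followed by one identification: iterate to $x_1^{a^t}x_2^{b}x_3^{\gamma}$ with $\overline{a^t}=1$, identify $x_3$ with $x_2$, and use idempotency $1+b+\gamma\equiv_{q-1}1$ to see the second exponent reduces to $q-1$.

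The case $\gcd(a,q-1)>1$, however, is left as a sketch of a sketch: you oscillate among several ideas (switch to the other variable; redistribute exponents using the idempotent $e$ and $1-e$) without committing to a chain of compositions that actually produces $x_1x_2^{q-1}$, and you flag the uncertainty yourself. In particular, ``apply the same argument to the other variable'' does not close the case: with $q=7$, $a=3$, $b=4$ we have $a+b\equiv_6 1$ but $\gcd(3,6)=3$ and $\gcd(4,6)=2$, so neither side admits the unit-power trick directly (what is true is only that $\gcd(a,q-1)$ and $\gcd(b,q-1)$ are coprime). The missing computation can be supplied cleanly. Iterating the substitution-and-identification of Lemma~\ref{lem:oneGcdq1Is1} on the first variable keeps the monomial binary and idempotent, so for some $N$ one obtains $x_1^{E}x_2^{q-E}\in C$ with $E:=\overline{a^{N}}$ satisfying $E^2\equiv_{q-1}E$. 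If $E\in\{1,q-1\}$, that monomial or its transpose already is $x_1x_2^{q-1}$. Otherwise, substituting the transposed monomial into the first slot gives
\[
\bigl(x_1^{q-E}x_3^{E}\bigr)^{E}\,x_2^{q-E}
   = x_1^{(q-E)E}\,x_2^{q-E}\,x_3^{E^{2}}
   \ \longrightarrow\ x_1^{q-1}\,x_2^{q-E}\,x_3^{E}\in C,
\]
using $(q-E)E\equiv_{q-1}E-E^2\equiv_{q-1}0$ (with positive value, so representative $q-1$) and $E^2\equiv_{q-1}E$; identifying $x_3$ with $x_2$ yields $x_1^{q-1}x_2^{q}=x_1^{q-1}x_2$, and permuting gives $x_1x_2^{q-1}\in C$. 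With this step filled in, your route is a valid, self-contained alternative to the paper's appeal to \cite{GKS:MC}; without it, the argument has a real gap precisely where the external theorem is doing the work.
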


\begin{proof}
 Since $C \neq \genMClo{\{x_1\}}$, we have that
 there is an $n\in\N$ with $n\geq 2$, and
 there are $\alpha(1), \ldots, \alpha(n) \in \N$
 such that
 $m=\prod_{i=1}^n x_i^{\alpha(i)} \in C$.
 Now we identify all variables $x_i$ where $i\geq 3$ with $x_2$
 and get $x_1^{\alpha(1)} x_2^{\sum_{i=2}^n\alpha(i)} \in C$.
 Since $m$ is idempotent we have $\alpha(1) + \sum_{i=2}^n \alpha(i) \equiv_{q-1} 1$,
 and thus $C$ contains the monomial $x_1^{\overline{\alpha(1)}}x_2^{q-\overline{\alpha(1)}}$.
 By \cite[Theorem 2.4]{GKS:MC} we now get $x_1x_2^{q-1} \in \genMClo{\{x_1^{\overline{\alpha(1)}}x_2^{q-\overline{\alpha(1)}}\}}$.
\end{proof}

\begin{cor}\label{cor:atoms}
  Let $q$ be a prime power.
 Then the atoms of the lattice of monomial clones on $\F_q$ are given by $\genMClo{\{x_1x_2^{q-1}\}}$ and
 $\genMClo{\{x_1^{s}\}}$
 where $2 \leq s\leq q-1$ and $s^P \equiv_{q-1} 1$ for some prime $P$ or $s\cdot s \equiv_{q-1} s$.
\end{cor}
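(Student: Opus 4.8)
The plan is to show first that the listed clones really are atoms, and then that nothing else can be an atom. For the classification, recall that any atom $C$ with $C\neq\Delta,\nabla$ is minimal above $\Delta=\genMClo{\{x_1\}}$, so it is generated by any single non-projection monomial it contains. The dichotomy to exploit is whether $C$ is idempotent or not. If $C$ is idempotent and $C\neq\Delta$, then Lemma~\ref{lem:smIdClone} gives $x_1x_2^{q-1}\in C$, hence $\genMClo{\{x_1x_2^{q-1}\}}\subseteq C$; since $\genMClo{\{x_1x_2^{q-1}\}}\neq\Delta$ (it contains a width-$2$ monomial that $\Delta$ does not), minimality forces $C=\genMClo{\{x_1x_2^{q-1}\}}$. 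So among idempotent monomial clones there is exactly one atom, $\genMClo{\{x_1x_2^{q-1}\}}$, and one must still check it genuinely covers $\Delta$: by Lemma~\ref{lem:q1Allq1} its elements are exactly the monomials $x_1\prod_{i=2}^{n}x_i^{q-1}$ up to equivalence together with the projections, and any proper subclone containing a non-projection must contain one such monomial, which by Lemma~\ref{lem:q1Allq1} generates all of them; hence no clone lies strictly between.

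Next I would handle the non-idempotent atoms. Suppose $C$ is an atom that is not idempotent. I claim $C$ must contain a unary monomial $x_1^s$ with $\overline{s}\neq 1$. Indeed, pick any monomial $m=\prod_{i=1}^{n}x_i^{\alpha(i)}\in C$; identifying all variables with $x_1$ yields $x_1^{\overline{\alpha}}\in C$ where $\alpha=\sum\alpha(i)$, and $\overline{\alpha}\neq 1$ for at least one choice of $m$ since otherwise every monomial of $C$ would be idempotent. Thus $\genMClo{\{x_1^{\overline{\alpha}}\}}\subseteq C$ is a non-projection subclone, so by minimality $C=\genMClo{\{x_1^{s}\}}$ for $s=\overline{\alpha}\in\{2,\dots,q-1\}$. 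Now $\genMClo{\{x_1^{s}\}}$ is an atom iff it has no proper non-projection subclone, i.e.\ iff for every $t\in\{2,\dots,q-1\}$ with $x_1^t\in\genMClo{\{x_1^s\}}$ we have $\genMClo{\{x_1^t\}}=\genMClo{\{x_1^s\}}$ — this is precisely the statement that $s$ generates a \emph{minimal} subsemigroup (other than $\{1\}$, i.e.\ other than the identity) of the multiplicative monoid $(\Z_{q-1},\cdot)$ under the unary operation $x\mapsto x^s$, equivalently a minimal nontrivial subgroup-or-idempotent situation. Here is where I invoke the cited result on atoms of the lattice of clones generated by a unary function from \cite{MP:TCCSM}: it characterizes exactly when $\langle x\mapsto x^s\rangle$ is an atom as the condition $s^P\equiv_{q-1}1$ for some prime $P$ (the group case, where the cyclic subsemigroup generated by $s$ is a prime-order subgroup of the units, or more generally a minimal object) or $s\cdot s\equiv_{q-1}s$ (the idempotent case, $s$ an idempotent of $(\Z_{q-1},\cdot)$ with $s\neq 1$). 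I would translate that statement into the monomial-clone language: the map $\varphi$ or simply the observation that $\overline{\genMClo{\{x_1^s\}}}$ restricted to unary monomials is the subsemigroup of $(\Z_{q-1},\cdot,1)$ generated by $s$, and that this subsemigroup's atomicity matches the full clone's atomicity once we know (from the idempotent analysis above) that any non-projection clone with only unary-indecomposable behavior still contains some $x_1^t$.

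The main obstacle I expect is the last equivalence: showing that $\genMClo{\{x_1^s\}}$ is an atom in the full monomial clone lattice \emph{iff} the one-variable subsemigroup generated by $s$ is minimal. The subtle point is that $\genMClo{\{x_1^s\}}$ could, a priori, contain monomials of larger width whose presence forces extra unary monomials not visible from iterating $x\mapsto x^s$ alone, or a proper subclone could be "wider" than purely unary. To close this I would argue that $\overline{\genMClo{\{x_1^s\}}}$ contains no monomial of width $\geq 2$ at all: any such monomial, by Lemma~\ref{lem:getMonWithExp} and Lemma~\ref{lem:cutQ1}, together with $x_1^s$ would let us build (via the combination lemmas) a monomial with a repeated exponent and then strip it, and one checks the unary closure is already a group or $\{0,s\}$-type set on which width cannot increase because composition of $x_1^s$ with itself never introduces a second variable. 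Concretely, $\genMClo{\{x_1^s\}}$ consists exactly of the projections plus the monomials $\prod x_i^{\alpha(i)}$ that are equivalent to a power $x_1^{s^k}$ after identification — i.e.\ it has width $\le 1$ in $\overline{C}$ — so its subclones correspond bijectively to sub-objects of the cyclic monoid generated by $s$, and atomicity reduces exactly to the \cite{MP:TCCSM} criterion. I would also note that $\genMClo{\{x_1x_2^{q-1}\}}$ is not of the form $\genMClo{\{x_1^s\}}$ (it has a width-$2$ monomial), so the two families are disjoint and the list is complete and non-redundant.
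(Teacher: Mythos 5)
Your proposal is correct and follows essentially the same route as the paper's proof: case-split on whether the atom is idempotent, use Lemma~\ref{lem:smIdClone} for the idempotent case, reduce the non-idempotent case to a unary generator $x_1^s$ by variable identification, and invoke the cited criterion from \cite{MP:TCCSM}. You simply add the (correct, if somewhat circuitously phrased) verification that the listed clones are genuinely atoms — in particular, that $\genMClo{\{x_1^s\}}$ contains only width-one monomials, so its subclones are governed entirely by the cyclic submonoid of $(\Z_{q-1},\cdot)$ generated by $s$ — which the paper leaves implicit in its appeal to the cited lemma.
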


\begin{proof}
Let $C$ be an atom of the lattice of monomial clones on $\F_q$.
If $C$ contains a non-idempotent monomial, then, by variable identification and reduction
of the exponent, $C$ contains a monomial $x_1^t$ for some $t\in\N$ with $2\leq t \leq q-1$.
As $C$ is an atom and $x_1^t \not\in \genMClo{\{x_1\}}$, we have $C= \genMClo{\{x_1^t\}}$.
The result now follows from \cite[Lemma 3.1]{MP:TCCSM}, where more details can be found in \cite{MP:MCLOCRES}.
Now we assume that $C$ contains only idempotent monomials.
Since $\genMClo{\{x_1\}}$ is a proper subset of $C$, 
we have $x_1x_2^{q-1} \in C$ by Lemma~\ref{lem:smIdClone}, and thus $C = \genMClo{\{x_1x_2^{q-1}\}}$.
\end{proof}

\subsection{Coatoms}

\begin{lem}\label{lem:xkxlgcd}
 Let $q$ be a prime power and let $C$ be a monomial clone on $\F_q$.
 Let $k, l \in \N$.
 Then $x_1 \cdots x_{1+k} \in C$ and $x_1 \cdots x_{1+l} \in C$
 if and only if $x_1 \cdots x_{1+\gcd(k,l)} \in C$.
\end{lem}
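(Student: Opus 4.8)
The plan is to prove both implications using the combinatorial generating lemmas established in Section~\ref{sec:FirstObs}, together with the elementary fact that the monomial $x_1\cdots x_{1+k}$ is idempotent for every $k\in\N$ (the sum of its exponents is $1+k\equiv_{\gcd(k,q-1)}$ something congruent to $1$ modulo $\gcd(k,q-1)$, but more to the point one uses Lemma~\ref{lem:x1xkThenAllK} to understand the clone it generates). For the ``if'' direction, suppose $x_1\cdots x_{1+\gcd(k,l)}\in C$. Writing $d:=\gcd(k,l)$, both $k$ and $l$ are multiples of $d$, so by repeated application of Lemma~\ref{lem:combRule} (in the form recorded in the example following it: if $x_1\cdots x_{1+d}\in C$ then $x_1\cdots x_{1+t d}\in C$ for all $t\in\N$) we obtain $x_1\cdots x_{1+k}\in C$ and $x_1\cdots x_{1+l}\in C$ directly.

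For the ``only if'' direction, the strategy is to run a Euclidean-algorithm argument. Assume $x_1\cdots x_{1+k}\in C$ and $x_1\cdots x_{1+l}\in C$, and without loss of generality $k\geq l$. First I would use Lemma~\ref{lem:x1xkThenAllK} (applied with $k+1$ in the role of its $k$) to see that $x_1^{1+\gcd(k,q-1)}\in C$, and more importantly that $x_1\cdots x_q\in C$ whenever $k\geq 1$, since by Lemma~\ref{lem:twoInvThenx1xq} any monomial with two unit exponents generates $x_1\cdots x_q$; indeed $x_1\cdots x_{1+k}$ has at least two exponents equal to $1$ once $k\geq 1$. With $x_1\cdots x_q\in C$ in hand, the key reduction step is Lemma~\ref{lem:x1xkxathenx1xkqa}: knowing $x_1\cdots x_{1+k}\in C$ and $x_1^{\alpha}\in C$ with $k>q-\alpha$ gives $x_1\cdots x_{1+k-(q-\alpha)}\in C$, i.e.\ one can decrease the width by $q-\alpha$. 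Combining this with the fact that $x_1\cdots x_{1+l}\in C$ lets me, by identifying variables, produce the appropriate $x_1^{\alpha}$ to subtract $l$ (modulo the wrap-around by $q-1$ that the lemmas handle): concretely, from $x_1\cdots x_{1+l}\in C$ one identifies all variables to get $x_1^{2+l}\in C$, hence (reducing the exponent) a suitable unary monomial, and then iterates the reduction of Lemma~\ref{lem:x1xkxathenx1xkqa} to pass from $x_1\cdots x_{1+k}$ to $x_1\cdots x_{1+(k-l)}$, and so on down the Euclidean algorithm for $(k,l)$, terminating at $x_1\cdots x_{1+\gcd(k,l)}\in C$.

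The main obstacle I anticipate is bookkeeping the interaction between ordinary subtraction of widths and the modular reduction modulo $q-1$ in the exponents: Lemma~\ref{lem:x1xkxathenx1xkqa} only lets one subtract quantities of the form $q-\alpha$ with $\alpha\in\{1,\dots,q\}$, so to subtract a large $l$ one must either subtract it in several steps or first reduce $l$ modulo $q-1$, and one has to check that the width never drops below the target $1+\gcd(k,l)$ prematurely and that the exponents one needs ($x_1^\alpha\in C$) are genuinely available at each stage. The cleanest way around this is probably to observe that once $x_1\cdots x_q\in C$, the set $\{\,k\in\N_0 : x_1\cdots x_{1+k}\in C\,\}$ together with $0$ is closed under addition (by Lemma~\ref{lem:combRule}) and under the partial subtraction supplied by Lemma~\ref{lem:x1xkxathenx1xkqa}, and to argue that such a subset of $\N_0$ closed under these operations, once it contains $k$ and $l$, must contain $\gcd(k,l)$; this is essentially the statement that the relevant numerical structure is an ideal-like object, reducing the whole ``only if'' direction to a short elementary number-theoretic lemma that I would state and prove separately.
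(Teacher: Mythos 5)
Your ``if'' direction is correct and coincides with the paper's: Lemma~\ref{lem:combRule} gives $x_1\cdots x_{1+t\gcd(k,l)}\in C$ for all $t\in\N$, and taking $t=k/\gcd(k,l)$ and $t=l/\gcd(k,l)$ yields the two required monomials.

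The ``only if'' direction has a genuine gap, and the one concrete step you commit to is wrong. Identifying all variables of $x_1\cdots x_{1+l}$ gives $x_1^{1+l}$ (not $x_1^{2+l}$), hence $x_1^{\alpha}\in C$ with $\alpha=\overline{1+l}$. Feeding this into Lemma~\ref{lem:x1xkxathenx1xkqa} reduces the width of $x_1\cdots x_{1+k}$ by $q-\alpha$, so the offset changes from $k$ to $k-(q-\alpha)$; since $q-\alpha\equiv_{q-1}-l$, the new offset is $\equiv_{q-1} k+l$, not $k-l$. In other words, your ``Euclidean step'' does not subtract $l$ from the offset; it adds it (modulo $q-1$) while decreasing the actual width by at most $q-1$. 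To genuinely subtract $l$ you would need $x_1^{q-l}\in C$, which is not available at this stage of the argument (it is essentially equivalent to what you are trying to prove). The unique value of $\alpha$ for which Lemma~\ref{lem:x1xkxathenx1xkqa} preserves the offset modulo $q-1$ is $\alpha=1$, and that merely subtracts $q-1$. Consequently the ``short number-theoretic lemma'' you defer to cannot be the naive statement that a set closed under addition and this partial subtraction contains $\gcd(k,l)$, because the partial subtraction shifts residue classes; you would have to state and prove something more subtle, and you have not done so.

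For comparison, the paper does not run a Euclidean algorithm on widths at all. It first invokes Lemma~\ref{lem:x1xkThenAllK} on each hypothesis to get $x_1\cdots x_{1+t_1 k'}\in C$ and $x_1\cdots x_{1+t_2 l'}\in C$ for all $t_1,t_2\in\N_0$, with $k'=\gcd(k,q-1)$ and $l'=\gcd(l,q-1)$; composes these and identifies variables to obtain $x_1x_2^{t_1k'+t_2l'}\in C$; applies B\'ezout's identity (shifting coefficients into $\N$ and using closure under equivalent monomials) to get $x_1x_2^{\gcd(k,l,q-1)}\in C$, and hence $x_1^{1+\gcd(k,l,q-1)}\in C$; then uses your observation that $x_1\cdots x_q\in C$ (via Lemma~\ref{lem:twoInvThenx1xq}) together with Corollary~\ref{cor:x1xqxathenx1xa} to obtain $x_1\cdots x_{1+\gcd(k,l,q-1)}\in C$; and finally applies Lemma~\ref{lem:combRule} to reach $x_1\cdots x_{1+\gcd(k,l)}$. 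If you want to keep the flavor of your ``closure'' argument, replace the Euclidean subtraction by this B\'ezout computation at the level of exponents, and use Lemma~\ref{lem:x1xkxathenx1xkqa} only with $\alpha=1$ (equivalently, Corollary~\ref{cor:x1xqxathenx1xa}) to transfer the resulting exponent back into a width.
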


\begin{proof}
By Lemma~\ref{lem:combRule}
we have $x_1 \cdots x_{1+t\cdot\gcd(k,l)} \in C$ for all $t\in \N$,
and thus the ``if''-direction holds.
Now we prove the ``only if''-direction.
Let $k' := \gcd(k, q-1)$ and $l' := \gcd(l, q-1)$.
By Lemma~\ref{lem:x1xkThenAllK} 
we have for all $t_1, t_2 \in \N_0$,
that $m^1_{t_1} := x_1 \cdots x_{1+t_1\cdot k'}$
and $m^2_{t_2}:=x_1 \cdots x_{1+t_2 \cdot l'}$ lie in $C$.
Let $t_1, t_2 \in \N_0$ with $t_1>0$ or $t_2>0$.
Then
\begin{align*}
 m^1_{t_1}&(x_1, \ldots, x_{t_1\cdot k'}, m^2_{t_2}(x_{1+t_1\cdot k'},\ldots, x_{1+t_1\cdot k' +t_2\cdot l'})) 
 = x_1\cdots x_{1+t_1\cdot k' + t_2\cdot l'} \in C.
\end{align*}

We proceed similar as in Lemma~\ref{lem:x1xkThenAllK}.
By identifying variables we get that the monomial
$x_1 x_2^{t_1\cdot k' + t_2\cdot l'}$ lies in $C$.
There are $u_1, u_2 \in \Z$ such that $u_1\cdot k' + u_2 \cdot l' = \gcd(k,l,q-1)$,
since $k' = \gcd(k, q-1)$ and $l'= \gcd(l, q-1)$.
Hence there are  $t_1', t_2' \in \N$
such that $\overline{t_1'\cdot k'  + t_2' \cdot l' } = \gcd(k, l, q-1)$.
This means that $x_1x_2^{\gcd(k,l,q-1)} \in C$,
and by identifying variables we get $x_1^{1+\gcd(k,l,q-1)} \in C$.
We have $x_1\cdots x_{1+k}\in C$ and thus we get by Lemma~\ref{lem:twoInvThenx1xq} that $x_1\cdots x_q \in C$.
Since $x_1\cdots x_q \in C$ and $x_1^{1+\gcd(k,l,q-1)} \in C$
we get by Corollary~\ref{cor:x1xqxathenx1xa} that $x_1\cdots x_{1+\gcd(k,l, q-1)} \in C$.
By Lemma~\ref{lem:combRule} we get now that
$ x_1\cdots x_{1+ t\cdot \gcd(k,l,q-1)} \in C$ for all $t\in \N$,
and thus we have  $x_1 \cdots x_{1 +\gcd(k,l)}\in C$.
\end{proof}

Theorem~\ref{thm:topIsoDivLat} shows that the lattice between $\genMClo{\{x_1\cdots x_q\}}$ and $\genMClo{\{x_1x_2\}}$ is antiisomorphic to
the divisor lattice of $q-1$.
We denote $([\genMClo{\{x_1 \cdots x_q\}}, \genMClo{\{x_1x_2\}}], \vee, \cap)$
 by $\mathcal{X}(q)$ and the lattice of (positive) divisors of $q-1$
 by $\mathcal{D}(q-1)$.

\begin{thm}\label{thm:topIsoDivLat}
 Let $q$ be a prime power,
 and let $\mathcal{C} \colon \mathcal{D}(q-1) \to \mathcal{X}(q)$, $\mathcal{C}(a):= \genMClo{\{x_1 \cdots x_{a+1} \}}$.
 Let $a, b\in \mathcal{D}(q-1)$.
 Then $b\mid a$ if and only if $\mathcal{C}(a) \subseteq \mathcal{C}(b)$.
 Furthermore, $\mathcal{C}$ is surjective, and thus $\mathcal{C}$ is a lattice isomorphism from $\mathcal{D}(q-1)$ to $\mathcal{X}(q)$.
\end{thm}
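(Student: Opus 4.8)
The plan is to verify three things: (i) $\mathcal{C}(a)$ really lands in $\mathcal{X}(q)$ for every divisor $a$ of $q-1$; (ii) the order-reversing equivalence $b\mid a \iff \mathcal{C}(a)\subseteq\mathcal{C}(b)$; and (iii) surjectivity. Once these are in hand, $\mathcal{C}$ is an order-anti-isomorphism between two finite lattices, hence a lattice isomorphism onto $\mathcal{X}(q)$ (with join and meet swapped, as the notation $\mathcal{X}(q)$ with $\vee,\cap$ already anticipates). For (i): since $a\mid q-1$, Lemma~\ref{lem:x1xkThenAllK} tells us that $\mathcal{C}(a)=\genMClo{\{x_1\cdots x_{a+1}\}}$ consists of exactly the monomials whose exponent sum is $\equiv_{\gcd(a,q-1)}1$, i.e.\ $\equiv_a 1$; in particular $x_1\cdots x_q$ (exponent sum $q\equiv_{q-1}1\equiv_a 1$) lies in it, so $\genMClo{\{x_1\cdots x_q\}}\subseteq\mathcal{C}(a)\subseteq\genMClo{\{x_1x_2\}}=\nabla$.

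For (ii), the description from Lemma~\ref{lem:x1xkThenAllK} does most of the work. If $b\mid a$ then "$\equiv_a 1$" implies "$\equiv_b 1$", so $\mathcal{C}(a)\subseteq\mathcal{C}(b)$ directly. Conversely, suppose $\mathcal{C}(a)\subseteq\mathcal{C}(b)$. Then $x_1\cdots x_{a+1}\in\mathcal{C}(b)$, so by Lemma~\ref{lem:x1xkThenAllK} applied to $\mathcal{C}(b)$ the exponent sum $a+1$ must satisfy $a+1\equiv_b 1$, i.e.\ $b\mid a$. So the equivalence is immediate from the explicit membership criterion.

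For (iii), surjectivity, let $D$ be an arbitrary monomial clone with $\genMClo{\{x_1\cdots x_q\}}\subseteq D\subseteq\nabla$; I must produce a divisor $a$ of $q-1$ with $D=\mathcal{C}(a)$. Since $x_1\cdots x_q\in D$ and $D\subseteq\nabla$ but the only monomial in $\nabla$ of width $>1$ with all exponents equal to $1$ that sits "above" $x_1\cdots x_q$ are of the form $x_1\cdots x_{1+k}$, the natural candidate is
\[
 S := \{\, k\in\N \mid x_1\cdots x_{1+k}\in D \,\}.
\]
This set is nonempty ($q-1\in S$ since $x_1\cdots x_q\in D$), and by Lemma~\ref{lem:xkxlgcd} it is closed under $\gcd$; let $a$ be its minimum, equivalently $a=\gcd(S)$, and note $a\mid q-1$. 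By Lemma~\ref{lem:combRule} every multiple of $a$ lies in $S$ and by Lemma~\ref{lem:xkxlgcd} nothing else does, so $\mathcal{C}(a)=\genMClo{\{x_1\cdots x_{a+1}\}}\subseteq D$. For the reverse inclusion I need that $D$ contains \emph{only} monomials with exponent sum $\equiv_a 1$: take any $m=\prod x_i^{\alpha(i)}\in D$; identifying all variables gives $x_1^{\overline{\sum\alpha(i)}}\in D$, and I want to argue this forces $\overline{\sum\alpha(i)}\equiv_a 1$. The cleanest route is via the semi-affine connection of Section~\ref{sec:conToSemi}: $\varphi(D)$ is a $0$-preserving semi-affine clone on $\Z_{q-1}$, it contains the unary map $y\mapsto ay$ (image of $x_1\cdots x_{a+1}$ after the appropriate substitution) and all projections; one checks that such a clone can only contain unary maps $y\mapsto cy$ with $c\equiv_a 1$ and more generally sums $\sum[r(i)]y_i$ with $\sum r(i)\equiv_a 1$ unless it also contains an "$x_1^t$"-type element forcing something smaller — but that would put a shorter $x_1\cdots x_{1+k}$ into $D$ by Corollary~\ref{cor:x1xqxathenx1xa} (using $x_1\cdots x_q\in D$), contradicting minimality of $a$. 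Hence $D\subseteq\mathcal{C}(a)$, giving $D=\mathcal{C}(a)$.

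The main obstacle I anticipate is precisely this last reverse inclusion: ruling out a monomial clone $D$ that contains $x_1\cdots x_{a+1}$ (so all exponent-sum-$\equiv_a 1$ monomials) yet \emph{also} contains some extra monomial whose exponent sum is $\not\equiv_a 1$ but still $\equiv_{q-1}$ to something with $\gcd$ properties that do not lower $a$. The resolution hinges on Corollary~\ref{cor:x1xqxathenx1xa}: from $x_1\cdots x_q\in D$ together with any $x_1^\alpha\in D$ one extracts $x_1\cdots x_\alpha\in D$, which feeds back into $S$ and hence cannot make $\gcd(S)$ strictly smaller than $a$ without contradiction. Making this bookkeeping airtight — that every element of $\overline{D}$ has exponent sum $\equiv_a 1$, with no sporadic exceptions — is where the real care is needed; everything else is a direct appeal to Lemmas~\ref{lem:x1xkThenAllK}, \ref{lem:combRule}, \ref{lem:xkxlgcd} and Corollary~\ref{cor:x1xqxathenx1xa}.
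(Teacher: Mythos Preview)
Your proposal is correct and follows essentially the same route as the paper: both use Lemma~\ref{lem:x1xkThenAllK} for the order-reversing equivalence and Corollary~\ref{cor:x1xqxathenx1xa} together with Lemma~\ref{lem:xkxlgcd} for surjectivity. Drop the semi-affine detour in (iii) --- the direct argument you already sketch (identify variables to get $x_1^{\alpha}\in D$, apply Corollary~\ref{cor:x1xqxathenx1xa} to obtain $x_1\cdots x_{\alpha}\in D$, hence $\alpha-1\in S$ and $a\mid \alpha-1$) is complete on its own and is exactly what the paper does.
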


\begin{proof}
The claim is obvious for $q=2$, so we assume $q\geq 3$.
 If $b \mid a$, there exists $t \leq q-1$ such that $t\cdot b = a$.
 Then we get by Lemma~\ref{lem:combRule}, that $x_1 \cdots x_{1+a} = x_1 \cdots x_{1+t\cdot b} \in \mathcal{C}(b)$,
 and thus $\mathcal{C}(a) \subseteq \mathcal{C}(b)$.
 Now we assume $\mathcal{C}(a) \subseteq \mathcal{C}(b)$.
 By Lemma~\ref{lem:x1xkThenAllK} we have that $\genMClo{\{x_1\cdots x_{1+b}\}}$
 precisely contains all monomials where the sum of exponents is congruent to $1$ modulo $b$.
 Therefore, we have that
 $b$ is the smallest natural number $k\in\N$ such that $x_1\cdots x_{1+k} \in \mathcal{C}(b)$.
 By Lemma~\ref{lem:xkxlgcd} we get
 $x_1\cdots x_{1+\gcd(a,b)} \in \mathcal{C}(b)$.
 Since $b$ is the smallest element $k\in\N$ such that
 $x_1 \cdots x_{1+k} \in \mathcal{C}(b)$,
 we have $\gcd(a,b) = b$, and thus $b\mid a$.
 Hence $\mathcal{C}$ is an order embedding.
 
 To prove surjectivity, we show the following:
 If $C$ is a monomial clone on $\F_q$ that contains $x_1\cdots x_q$,
 then there exists
 an $a\in \N$ with $a\mid q-1$ such that $C= \mathcal{C}(a)$.
 First, we show that for all monomials $m$
 there exists an $a\in \N$ with $a\mid q-1$ such that $\genMClo{\{m, x_1\cdots x_q\}} = \mathcal{C}(a)$.
 Let $m= x_1^{\alpha(1)} \cdots x_n^{\alpha(n)}$.
 Then $x_1^{\alpha} \in \genMClo{\{m, x_1\cdots x_q\}}$ where
 $\alpha = \overline{\sum_{i=1}^n \alpha(i)}$.
 If $\alpha = q-1$, then $x_1^{q-1}$ and $x_1\cdots x_q\in C$,
 so we have $x_1 x_2x_3^{q-1}\cdots x_q^{q-1}\in C$, and thus
 Lemma~\ref{lem:cutQ1} yields $x_1 x_2 \in \genMClo{\{m, x_1\cdots x_q\}}$.
 Hence, $\genMClo{\{m, x_1\cdots x_q\}}=\genMClo{\{x_1x_2\}}$.
 If $\alpha = 1$, then $\genMClo{\{m, x_1\cdots x_q\}}= \genMClo{\{x_1\cdots x_q\}}= \mathcal{C}(q-1)$ by Lemma~\ref{lem:maxIdClone}.
 Now we assume that $1<\alpha<q-1$.
 By Corollary~\ref{cor:x1xqxathenx1xa} we get
 $x_1 \cdots x_{\alpha} \in \genMClo{\{m, x_1\cdots x_q\}}$ and thus $\genMClo{\{x_1 \cdots x_{\alpha}, x_1\cdots x_q\}} \subseteq \genMClo{\{m, x_1\cdots x_q\}}$.
 By Lemma~\ref{lem:xkxlgcd},
 $x_1 \cdots x_{1 + \gcd(\alpha-1, q-1)} \in \genMClo{\{x_1 \cdots x_{\alpha}, x_1\cdots x_q\}} \subseteq \genMClo{\{m, x_1\cdots x_q\}} $.
 On the other hand, it follows from Lemma~\ref{lem:x1xkThenAllK} that $m$ and $x_1 \cdots x_q$
 lie in  $\genMClo{\{x_1 \cdots x_{1 + \gcd(\alpha-1, q-1)}\}}$.
 Hence we have $\genMClo{\{m, x_1\cdots x_q\}} = \mathcal{C}(\gcd(\alpha-1, q-1))$.
 We showed that for all monomials $m$
 there exists an $a\in \N$ with $a\mid q-1$ such that $\genMClo{\{m, x_1\cdots x_q\}} = \mathcal{C}(a)$.
 
 Let $C$ be a monomial clone on $\F_q$ that contains $x_1\cdots x_q$.
 We know that for every $m\in C$ there is some $a_m\in \mathcal{D}(q-1)$
 such that $\mathcal{C}(a_m) = \genMClo{\{m, x_1 \cdots x_q\}} \subseteq C$.
 The set $A= \{a_m \mid m\in C\} \subseteq \mathcal{D}(q-1)$ is finite, so $a:= \gcd(A)$ can be computed. 
From Lemma~\ref{lem:xkxlgcd} it follows that $\mathcal{C}(a)\subseteq C$,
and conversely for every $m \in C$ we have $m \in \mathcal{C}(a_m) \subseteq \mathcal{C}(a)$.
Thus $C = \mathcal{C}(a)$.
\end{proof}

\begin{lem}\label{lem:dup2}
 Let $q$ be a prime power and let $C$ be a monomial clone on $\F_q$.
 Let $n\in\N$ with $n\geq 2$, and let $\alpha(1),\ldots, \alpha(n)\in \N$ and let $\gamma \in \N_0$.
  We assume that $m = x_1^{\alpha(1)} \cdots x_n^{\alpha(n)}x_{n+1}^{\gamma} \in C$.
   Then for all $t\in \N_0$ there is a $k\colon \{1, \ldots, t\} \to \N$
   and there is a $\gamma' \in \N_0$
  such that
  $$x_1^{\alpha(1)} x_2^{\alpha(2)^{t+1}} x_3^{\alpha(3)} \cdots x_n^{\alpha(n)}x_{n+1}^{\alpha(1)\alpha(2)^{k(1)}}x_{n+2}^{\alpha(1)\alpha(2)^{k(2)}}\cdots x_{n+t}^{\alpha(1)\alpha(2)^{k(t)}} x_{n+t+1}^{\gamma'} \in C.$$
\end{lem}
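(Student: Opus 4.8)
The plan is to prove the statement by induction on $t$, mimicking the "feeding a monomial into one of its own variables" technique used in Lemma~\ref{lem:oneGcdq1Is1} and Lemma~\ref{lem:dup2}'s relatives. The base case $t=0$ is just the hypothesis $m = x_1^{\alpha(1)}\cdots x_n^{\alpha(n)}x_{n+1}^{\gamma}\in C$, where we read the empty product over the new variables $x_{n+1},\dots,x_{n+t}$ as vacuous and take $\gamma' = \gamma$, $k$ the empty function. For the induction step, suppose we already have a monomial
$$w = x_1^{\alpha(1)} x_2^{\alpha(2)^{t+1}} x_3^{\alpha(3)}\cdots x_n^{\alpha(n)}x_{n+1}^{\alpha(1)\alpha(2)^{k(1)}}\cdots x_{n+t}^{\alpha(1)\alpha(2)^{k(t)}} x_{n+t+1}^{\gamma'} \in C$$
for some $k\colon\{1,\dots,t\}\to\N$ and $\gamma'\in\N_0$. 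The idea is to substitute a fresh copy of the original $m$ (with renamed variables) into the variable $x_2$ of $w$, spreading the extra $x_3,\dots,x_n,x_{n+1}$-parts of $m$ onto brand-new variables, which become the new $x_{n+t+1}$-slot (and get absorbed into an updated tail $\gamma''$), while the exponent on $x_2$ is multiplied by $\alpha(2)$, giving $\alpha(2)^{t+2}$, and one more variable $x_{n+t+1}$ with exponent $\alpha(1)\alpha(2)^{k(t+1)}$ appears for a suitable new value $k(t+1)$.

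Concretely, in the induction step I would work with $m' := x_2^{\alpha(2)}\,\prod_{i\ne 2} x_i^{\alpha(i)}$, i.e.\ $m$ permuted so the $x_2$-variable sits in front; after renaming its variables to disjoint fresh ones $y_1,\dots$, I substitute $m'$ into the $x_2$-slot of $w$. The exponent bookkeeping is: the $x_2$-exponent of the result is $\alpha(2)^{t+1}\cdot\alpha(2) = \alpha(2)^{t+2}$; the variable carrying the old $x_1$-exponent $\alpha(1)$ of $m'$ picks up a factor $\alpha(2)^{t+1}$, so it has exponent $\alpha(1)\alpha(2)^{t+1}$ — this is the new variable $x_{n+t+1}$, and we set $k(t+1) := t+1$; every other variable of the substituted copy (the ones carrying $\alpha(3),\dots,\alpha(n),\gamma$) gets multiplied by $\alpha(2)^{t+1}$ and then identified together into a single fresh tail variable, whose exponent is the new $\gamma''$. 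Using that $C$ is closed under substitution of monomials and under identifying variables, and that $C$ is closed under equivalent monomials (so we may freely reduce exponents mod $q-1$ when convenient, though it is not even necessary here since we only need membership), the resulting monomial lies in $C$ and has exactly the claimed shape with $t$ replaced by $t+1$.

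The only real bookkeeping obstacle is keeping the indices of the new variables straight so that the final monomial literally reads $x_1^{\alpha(1)} x_2^{\alpha(2)^{t+2}}x_3^{\alpha(3)}\cdots x_n^{\alpha(n)}x_{n+1}^{\alpha(1)\alpha(2)^{k(1)}}\cdots x_{n+t+1}^{\alpha(1)\alpha(2)^{k(t+1)}}x_{n+t+2}^{\gamma''}$ — in particular, that the "old" tail variables $x_{n+1},\dots,x_{n+t}$ retain their exponents (they are untouched by the substitution into $x_2$), that the fresh variable with exponent $\alpha(1)\alpha(2)^{t+1}$ is inserted in position $n+t+1$ (renaming/permuting as allowed in a monomial clone), and that all the leftover fresh variables collapse into one tail slot $x_{n+t+2}$ via variable identification. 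Since $n\geq 2$ guarantees $x_2$ genuinely occurs in $m$ with a nonzero exponent $\alpha(2)\in\N$, the substitution is legitimate and the induction goes through. I expect no subtlety beyond this indexing discipline; the exponent arithmetic is a direct computation as in the displayed chains of equalities in Lemma~\ref{lem:consemcon1} and Lemma~\ref{lem:combRule}.
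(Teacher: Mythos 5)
Your proof is correct and follows essentially the same plan as the paper's: induction on $t$, composing at the second argument slot so that the $x_2$-exponent picks up an extra factor of $\alpha(2)$ and one new variable with exponent of the form $\alpha(1)\alpha(2)^{(\cdot)}$ is produced. The only (harmless) difference is the direction of composition: the paper substitutes the induction-hypothesis monomial into slot $2$ of $m$, so the resulting $k$-values are $1, k(1)+1,\ldots,k(t-1)+1$, whereas you substitute a permuted copy of $m$ into slot $2$ of the induction-hypothesis monomial, so the resulting $k$-values are $k(1),\ldots,k(t),t+1$; since $k$ is existentially quantified both work, and the bookkeeping you sketch (renaming the fresh $\alpha(1)$-carrying variable to position $n+t+1$, then collapsing all remaining fresh variables together with the old $x_{n+t+1}^{\gamma'}$ into the new tail $x_{n+t+2}$) is exactly what is needed.
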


\begin{proof}
 We proceed by induction on $t\in\N_0$.
 If $t=0$, then $m \in C$ and the statement is true.
 Let $t>0$.
 We assume that the induction hypothesis holds for $t-1$.
 By the induction hypothesis
 we have 
 $$m'(x_1, \ldots, x_{n+t}) = x_1^{\alpha(1)}x_2^{\alpha(2)^{t}}x_{3}^{\alpha(3)}\cdots x_{n}^{\alpha(n)}x_{n+1}^{\beta(1)} \cdots x_{n+t-1}^{\beta(t-1)}x_{n+t}^{\gamma'} \in C,$$
 where for all $i\leq t-1$ there is a $k(i) \in \N$,
 such that $\beta(i) = \alpha(1) \alpha(2)^{k(i)}$,
 and $\gamma'\in \N_0$.
 By substitution of monomials we have that
  \begin{equation*}
  \begin{aligned}
 m(&x_1,m'(x_{n+1}, x_2, \underbrace{x_{n+t+1}, \ldots, x_{n+t+1}}_{n-2},x_{n+2}, x_{n+3}, \ldots, x_{n+t+1}), x_{3} \ldots, x_n, x_{n+t+1})\\
 &= x_1^{\alpha(1)}x_2^{\alpha(2)^{t+1}}x_3^{\alpha(3)}\cdots x_n^{\alpha(n)}x_{n+1}^{\alpha(1)\alpha(2)}x_{n+2}^{\alpha(2)\beta(1)}\cdots x_{n+t}^{\alpha(2)\beta(t-1)} x_{n+t+1}^{\gamma''} \in C,
  \end{aligned}
 \end{equation*}
 for some $\gamma'' \in \N_0$.
 This finishes the induction step.
\end{proof}

\begin{lem}\label{lem:gcdprod}
 Let $n\in\N$ with $n>1$, 
 and let $\alpha_1, \ldots, \alpha_n, \beta \in\N$ be such that
 for all $E \subseteq \{1,\ldots, n\}$ with $\lvert E \rvert = n-1$,
 we have
$\gcd(\{\alpha_i\mid i \in E \} \cup \{\beta\}) = 1$.
Let $k,l \in  \N^{\{1,\ldots, n\}\times \{1\ldots, n\}}$.
Then $\gcd(\{\alpha_i^{k(i,j)}\alpha_j^{l(i,j)} \mid 1 \leq i,j \leq n, i < j\}\cup \{ \beta\} ) = 1$.
\end{lem}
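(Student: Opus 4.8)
The plan is to reduce everything to a single prime and argue by contradiction. Suppose the gcd in the statement is not $1$; then there is a prime $p$ that divides $\beta$ and divides every product $\alpha_i^{k(i,j)}\alpha_j^{l(i,j)}$ with $1\le i<j\le n$. The first observation is that, since all the exponents $k(i,j)$ and $l(i,j)$ lie in $\N=\{1,2,\dots\}$ and hence are at least $1$, the divisibility $p\mid \alpha_i^{k(i,j)}\alpha_j^{l(i,j)}$ forces $p\mid\alpha_i$ or $p\mid\alpha_j$ for each pair $i<j$.

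Next I would examine the set $S:=\{i\in\{1,\dots,n\}\mid p\nmid\alpha_i\}$ of indices where $p$ fails to divide $\alpha_i$. If $S$ contained two distinct indices $i<j$, then $p$ would divide neither $\alpha_i$ nor $\alpha_j$, contradicting the previous paragraph applied to the pair $(i,j)$. Hence $\lvert S\rvert\le 1$. Consequently $p$ divides $\alpha_i$ for all $i$ in some $(n-1)$-element subset $E\subseteq\{1,\dots,n\}$: take $E=\{1,\dots,n\}\setminus S$ when $\lvert S\rvert=1$, and any $(n-1)$-element subset when $S=\0$ (such a subset exists since $n>1$, so $n-1\ge 1$).

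Finally, since $p\mid\alpha_i$ for all $i\in E$ and $p\mid\beta$, we get $p\mid\gcd(\{\alpha_i\mid i\in E\}\cup\{\beta\})$, which contradicts the hypothesis that this gcd equals $1$ for \emph{every} such $E$. This contradiction proves the lemma.

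I do not expect a genuine obstacle here; the proof is essentially a pigeonhole argument on the $(n-1)$-subsets. The only points requiring care are the bookkeeping in the case distinction on $\lvert S\rvert$ (and the observation that producing a single ``bad'' $(n-1)$-subset is enough, since the hypothesis is universally quantified over them), and the remark that the exponents being strictly positive is precisely what lets one pass from ``$p$ divides a power of $\alpha_i$'' to ``$p$ divides $\alpha_i$''.
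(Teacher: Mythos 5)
Your proof is correct and follows essentially the same pigeonhole argument as the paper, which observes directly that every prime factor of $\beta$ can divide at most $n-2$ of the $\alpha_i$'s (else some $(n-1)$-subset would violate the hypothesis) and hence fails to divide at least one product $\alpha_i^{k(i,j)}\alpha_j^{l(i,j)}$. You phrase it as a proof by contradiction and are slightly more explicit about the role of the strictly positive exponents and the edge case $S=\0$, but the underlying idea is identical.
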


\begin{proof}
By assumption, no prime divisor of $\beta$ appears in $n-1$ of the $\alpha(i)$'s.
Therefore, every prime factor of $\beta$ appears in at most $n-2$ of the $\alpha(i)$'s, and thus 
does not appear in the other $2$.
Hence $\gcd(\{\alpha_i^{k(i,j)}\alpha_j^{l(i,j)} \mid i,j \leq n, i < j\}\cup \{ \beta\} ) = 1$.
\end{proof}

\begin{lem}\label{lem:3gcdx1xq}
 Let $q$ be a prime power and let $C$ be a monomial clone on $\F_q$.
 Let $n\in\N$ with $n>1$ and let $\alpha(1),\ldots,\alpha(n)\in \N$ be
 such that for all $E \subseteq \{1,\ldots, n\}$ with $\lvert E \rvert = n-1$
we have
$\gcd(\{\alpha(i)  \mid i \in E\}\cup\{q-1\}) = 1$.
Let $\gamma \in \N_0$.
 If $x_1^{\alpha(1)}\cdots x_{n}^{\alpha(n)}x_{n+1}^{\gamma} \in C$,
 then $x_1\cdots x_q \in C$.
\end{lem}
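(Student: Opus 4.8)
I will reduce the statement, via Lemma~\ref{lem:twoInvThenx1xq}, to producing inside $C$ a single monomial carrying two exponents coprime to $q-1$. Two cases are immediate: when $q=2$ the hypothesis is vacuous and the claim is trivial, and when $q-1$ is a power of a single prime $P$ the hypothesis forces at least two of the $\alpha(i)$ to be coprime to $q-1$, so Lemma~\ref{lem:twoInvThenx1xq} applies to $m$ directly. So assume $q-1$ has at least two distinct prime divisors. First I would record what the hypothesis says: for every prime $P$ dividing $q-1$ there are at least two indices $i$ with $P\nmid\alpha(i)$. Consequently, the hypothesis of Lemma~\ref{lem:gcdprod} holds with $\beta=q-1$ for \emph{any} choice of the auxiliary exponents, so the mixed products $\alpha(i)^{k}\alpha(j)^{l}$ with $1\le i<j\le n$ and $k,l\ge1$ have greatest common divisor $1$ with $q-1$; equivalently, their residues modulo $q-1$ generate the group $\Z_{q-1}$.

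The core of the proof is to realise enough of these mixed products as exponents appearing simultaneously in one monomial of $C$. The tool is Lemma~\ref{lem:dup2}: starting from $m=x_1^{\alpha(1)}\cdots x_n^{\alpha(n)}x_{n+1}^{\gamma}$ and permuting variables (using closure under equivalent monomials), for a chosen pair $i<j$, applying Lemma~\ref{lem:dup2} with $x_i,x_j$ in the two distinguished positions and $t$ large yields a monomial of $C$ with $t$ new variable slots whose exponents are of the form $\alpha(i)\alpha(j)^{k}$, $k\ge1$, with the exponents $k$ that occur forming a set that grows with $t$. Reducing modulo $q-1$ and applying the pigeonhole principle, for each pair $i<j$ one can fix $l_{ij}\ge1$ such that the residue of $\alpha(i)\alpha(j)^{l_{ij}}$ is attained by arbitrarily many of these slots. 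The delicate step is to chain these applications of Lemma~\ref{lem:dup2}, one pair after another --- feeding suitably permuted copies of $m$ into the freshly created slots --- so as to accumulate, inside a \emph{single} monomial of $C$, as many slots of exponent $\alpha(i)\alpha(j)^{l_{ij}}$ as we wish for \emph{every} pair at once; here one must keep track of which variables are preserved and which are consumed by a given substitution (Lemma~\ref{lem:dup2} replaces one of the two distinguished slots by a high power, which afterwards is either carried along or discarded via Lemmas~\ref{lem:cutQ1} and~\ref{lem:q1Allq1}), and the superfluous slots are collapsed into one junk variable by Lemma~\ref{lem:getMonWithExp}.

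With such a monomial available, the endgame is short. Since $\{\alpha(i)\alpha(j)^{l_{ij}}\mid i<j\}\cup\{q-1\}$ has greatest common divisor $1$ (Lemma~\ref{lem:gcdprod} again), there are nonnegative integers $c_{ij}$ with $\sum_{i<j}c_{ij}\,\alpha(i)\alpha(j)^{l_{ij}}\equiv1\pmod{q-1}$. Arranging the monomial above to contain at least $2c_{ij}$ slots of exponent $\alpha(i)\alpha(j)^{l_{ij}}$ for every pair, I split these slots into two disjoint blocks each realising this combination and identify the variables within each block: the two resulting exponents are then $\equiv1\pmod{q-1}$, hence coprime to $q-1$, and Lemma~\ref{lem:twoInvThenx1xq} gives $x_1\cdots x_q\in C$. (Alternatively one can produce a single coprime exponent, turn it into $1$ by Lemma~\ref{lem:oneGcdq1Is1}, observe that the new monomial still satisfies the hypothesis, produce and normalise a second coprime exponent the same way, and finish with Lemma~\ref{lem:2var1x1xq}.)

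The step I expect to be the real obstacle is the chained use of Lemma~\ref{lem:dup2}: getting all the required mixed products to occur with sufficient multiplicity in \emph{one} monomial of $C$, rather than in separate monomials (which, since $x_1x_2\notin C$ a priori, cannot simply be multiplied together). Everything before and after is routine manipulation with the substitution lemmas of Section~\ref{sec:FirstObs}.
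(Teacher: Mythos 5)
Your overall strategy is the paper's: chain applications of Lemma~\ref{lem:dup2} over all pairs $(i,j)$ to manufacture, inside a single monomial of $C$, many variable slots whose exponents are mixed products $\alpha(i)^{e_1}\alpha(j)^{e_2}$; use Lemma~\ref{lem:gcdprod} to see that these residues, together with $q-1$, have gcd~$1$; then split the slots into two disjoint blocks, each summing to $1$ modulo~$q-1$, and finish with Lemma~\ref{lem:2var1x1xq} (equivalently Lemma~\ref{lem:twoInvThenx1xq}). Your opening case distinction ($q=2$, or $q-1$ a prime power) is correct but unnecessary, since the uniform argument covers those cases, and you correctly single out the chaining of Lemma~\ref{lem:dup2} as the crux.

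But the mechanism you sketch for that chaining --- ``feeding suitably permuted copies of $m$ into the freshly created slots'' --- is the step that would fail. Substituting a copy of $m$ for a variable of the accumulated monomial multiplies \emph{every} exponent of $m$ by that variable's exponent and consumes the slot; if the slot's exponent is already a product $\alpha(i)^{e_1}\alpha(j)^{e_2}$, you obtain triple products $\alpha(i')\alpha(i)^{e_1}\alpha(j)^{e_2}$ rather than the two-factor products that Lemma~\ref{lem:gcdprod} wants, and the slot you were trying to multiply is gone. The paper's induction does something different and this is precisely the missing idea: it transposes $x_1\leftrightarrow x_{i'}$ and $x_2\leftrightarrow x_{j'}$ (for $i',j'\leq n$) in the \emph{accumulated} monomial $h'$ and then applies Lemma~\ref{lem:dup2} to that. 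Because Lemma~\ref{lem:dup2} leaves the exponents at positions $3,\ldots,n'$ untouched, the slots built for earlier pairs, all sitting at positions $>n$, persist; the exponents at positions $\leq n$ become higher powers $\alpha(i)^{e(i)}$, which is harmless for Lemma~\ref{lem:gcdprod}. The paper also pins down the counting: choosing $t=2q^2$ slots per pair forces, by pigeonhole over the $\leq q$ residue classes $\overline{\alpha(i)^{e_1}\alpha(j)^{e_2}}$, some single residue $a(i,j)$ to appear $\geq 2q$ times, which suffices to carve out two disjoint blocks of size $\tilde t(i,j)\leq q-1$. Your endgame and use of Lemma~\ref{lem:gcdprod} are right; it is the accumulation step that needs the permute-then-reapply-to-$h'$ formulation rather than substitution of $m$.
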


\begin{proof}
Let $m := \prod_{i=1}^{n'} x_i^{\beta(i)}$ be a monomial.
We define for all $i, j\in \{1,\ldots, n\}$ with $i < j$,
$$E(m,i,j):= \{l \in\N \mid  n < l\leq n', \exists e_1, e_2 \in \N \colon \beta(l) = \alpha(i)^{e_1}\alpha(j)^{e_2}\}.$$

Let $t\in \N$.
We proceed by induction on $k\in\N\setminus\{1\}$
to show that there exist $e \in \N^n$, $n' \in \N$
and $h = \prod_{i=1}^{n'} x_i^{\beta(i)} \in C$
with $\beta(1), \ldots, \beta(n') \in \N$ 
such that 
for all $i \in \{1,\ldots, n\}$
we have $\beta(i) = \alpha(i)^{e(i)}$
and 
for all 
$i, j\in \{1,\ldots, n\}$ with $i< j$ 
and $(i-1)\cdot n + j \leq k$
we have
$\lvert E(h,i,j) \rvert \geq t$.

We assume that $k = 2$. Let $i',j'\in \{1,\ldots, n\}$ with $i' < j'$ be
such that $(i'-1)\cdot n + j' \leq k$.
Then $i' = 1$ and $j'=2$, since $n>1$ and $i',j' > 0$.
Since $C$ contains
$x_1^{\alpha(1)}\cdots x_{n}^{\alpha(n)}x_{n+1}^{\gamma}$,
 we get the desired $h$ by Lemma~\ref{lem:dup2}.
 
 Now let $k>2$.
 By the induction hypothesis
 there exist $e' \in \N^n$, $n'\in\N$
 and $h' = \prod_{i=1}^{n'} x_i^{\beta(i)} \in C$
with $\beta(1), \ldots, \beta(n') \in \N$ 
such that
for all $i \in \{1,\ldots, n\}$
we have $\beta(i) = \alpha(i)^{e'(i)}$
and
for all 
$i, j\in \{1,\ldots, n\}$ with $i< j$ and $(i-1)\cdot n + j \leq k-1$
we have $\lvert E(h',i,j) \rvert \geq t$.
 If there are  $i', j'\in \{1,\ldots, n\}$ such that $(i'-1)\cdot n + j' = k$, they are uniquely determined.
If $i' \geq  j'$ we set $h:= h'$.
Now we assume that $i' < j'$.
We permute in $h'$ the variable $x_1$ with $x_{i'}$
and the variable $x_2$ with $x_{j'}$ to get $h''$.
Then we get the desired $h$ by applying Lemma \ref{lem:dup2} to $h''$ and by
permuting back the variable $x_{i'}$ with $x_1$
and the variable $x_{j'}$ with $x_2$. This concludes the induction step.

By setting $t:= 2\cdot q^2$ and by choosing $k=n^2$ 
there is an $h = x_1^{\beta(1)}\cdots x_{n'}^{\beta(n')}\in C$
such that
for all $i, j\in \{1,\ldots, n\}$ with $i < j$,
we have $\lvert E(h, i, j) \rvert \geq t = 2\cdot q^2$.
For every $i,j \leq n$ with $i<j$, we define $A(i,j):=\{\overline{\alpha(i)^{e_1}\alpha(j)^{e_2}}\mid e_1, e_2 \in \N\}$.
We have $\lvert A(i,j) \rvert \leq q$ for each $i,j \leq n$ with $i<j$.
Let $i,j\leq n$ with $i<j$.
If for all $a \in A(i,j)$, we have
$\lvert \{l \in \N \mid n<l \leq n', a = \overline{\beta(l)} \} \rvert \leq 2\cdot q - 1$,
then  
$\lvert E(h, i, j) \rvert 
\leq \sum_{a\in A(i,j)} \lvert \{l \in \N \mid n<l \leq n', a = \overline{\beta(l)} \}\rvert 
\leq q\cdot (2\cdot q - 1) = 2\cdot q^2 - q < 2 \cdot q^2$
which is a contradiction, since $\lvert E(h, i, j) \rvert \geq 2\cdot q^2$.
Therefore, 
for all $i,j\leq n$ with $i<j$
there is an $a(i,j) \in A(i,j)$
such that $\lvert \{l \in \N \mid n<l \leq n', a(i,j) = \overline{\beta(l)} \} \rvert \geq 2 \cdot q$.
We define $g := \gcd(\{a(i,j) \mid (i,j)\in\{1, \ldots, n\}^2, i <  j\} \cup \{q-1\})$.
Let $I \subseteq \{1,\ldots, n\}^2$
be such that 
$ \{a(i,j) \mid (i,j)\in\{1, \ldots, n\}^2, i <  j\} =
  \{a(i, j) \mid (i,j)\in I\}$
  and for all $(i,j),(i',j') \in I$ with $(i,j) \neq (i',j')$
  we have $a(i,j) \neq a(i',j')$.
  Since for all $E \subseteq \{1,\ldots, n\}$ with $\lvert E \rvert = n-1$
we have
$\gcd(\{\alpha(i)  \mid i \in E\}\cup\{q-1\}) = 1$,
we get by Lemma~\ref{lem:gcdprod}
that $g = 1$.
Let $\tilde{t} \colon I\to \{1, \ldots, q-1\}$ be such that
$$ \overline{\sum_{(i,j)\in I} \tilde{t}(i,j) a(i,j)} = g = 1.$$
Since for all $i,j\leq n$ with $i<j$ we have
$\lvert \{k \leq n' \mid  \overline{\beta(k)} = a(i,j) \} \rvert \geq 2 \cdot q$,
there are two functions $X$ and $X'$ from $I$ 
to the finite subsets of $\{x_l \mid l\in \N\}$
such that
for all $(i,j) \in I$
we have
$\{a(i,j)\} = \{\overline{\beta(l)} \mid l \in \N, x_l \in X(i,j)\} =\{\overline{\beta(l)} \mid l \in \N, x_l \in X'(i,j)\}$,
$\lvert X(i,j)\rvert =  \lvert X'(i,j)\rvert = \tilde{t}(i,j)$
and $X(i,j) \cap X'(i,j) = \emptyset$.
We mention that for $(i_1,j_1) \neq (i_2, j_2)$,
the sets $X(i_1,j_1)$, $X'(i_1, j_1)$, $X(i_2,j_2)$ and $X'(i_2, j_2)$
are pairwise disjoint.
For all $(i, j) \in I$
we set the variables of $X(i,j)$ to $x_1$,
and 
we set the variables of $X'(i,j)$ to $x_2$.
All other variables of $h$ with exponent not equal to zero are set to $x_3$.
Now we have
$x_1^{S} x_2^{T}x_3^{\gamma}\in C$,
where $\gamma \in \N_0$ and 
$$S := \sum_{(i,j)\in I} \left(\sum_{x_l \in X(i,j)} \beta(l)\right), \textmd{ and } T := \sum_{(i,j) \in I} \left(\sum_{x_l \in X'(i,j)} \beta(l)\right).$$
For all $(i,j) \in I$ we have
$$\sum_{x_l \in X(i,j)} \overline{\beta(l)} = \sum_{x_l \in X(i,j)} a(i, j) = \tilde{t}(i,j) \cdot a(i,j)$$
and
$$\sum_{x_l \in X'(i,j)} \overline{\beta(l)} = \sum_{x_l \in X'(i,j)} a(i, j) = \tilde{t}(i,j) \cdot a(i,j),$$
hence $\overline{S} = \overline{T} = \overline{\sum_{(i,j)\in I} \tilde{t}(i,j)a(i,j)} = g = 1$.
Since $C$ is closed under equivalent monomials, we have
$x_1^{g}x_2^{g}x_3^{\gamma} = x_1 x_2x_3^{\gamma} \in C$.
Now Lemma~\ref{lem:2var1x1xq} yields that $x_1\cdots x_q$ lies in $C$.
\end{proof}

Let $q>2$ be a prime power, let $l\in\N$ and let $P_1<\ldots < P_l$ be the primes that divide $q-1$.
Let $D \subseteq \{1,\ldots,l\}$ be nonempty.
Now we define for $D$ the monomial clone $\addCo{D}$ by
$\genMClo{\{x_1^t, x_1^{P_i} \cdots x_{n}^{P_i},
x_1x_2^{T}
\mid n,t\in\N, i\in D\}}$,
where $T$ is an abbreviation of $\prod_{t\in D}P_t$.

\begin{lem}\label{lem:addCo}
  Let $q>2$ be a prime power, let $l\in\N$ and let $P_1<\ldots < P_l$ be the primes that divide $q-1$.
  Let $D \subseteq \{1,\ldots, l\}$ be nonempty. Let $n\in\N$.
  Then for all $\alpha(1), \ldots, \alpha(n) \in \N$ we have that $x_1^{\alpha(1)}\cdots x_n^{\alpha(n)} \in \addCo{D}$ 
  if and only if
  there exists $i\in D$ such that for all $j\leq n$ we have that $P_i$ divides $\alpha(j)$,
  or
  $\lvert \{i \in \N \mid i\leq n, \prod_{t\in D} P_t \textmd{ divides } \alpha(i)\}  \rvert \geq n-1$.
\end{lem}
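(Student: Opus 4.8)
The plan is to prove the sharper statement $\addCo{D}=\mathcal{M}$, where $\mathcal{M}$ denotes the set of all monomials $g$ such that, writing $\alpha(1),\dots,\alpha(n)$ for the \emph{nonzero} exponents of $g$, either (i) there is an $i\in D$ with $P_i\mid\alpha(j)$ for all $j\le n$, or (ii) $T\nmid\alpha(j)$ for at most one $j\le n$; the lemma is the special case of monomials all of whose exponents lie in $\N$. Throughout one uses that $P_i\mid q-1$ for each $i\le l$ and that $T=\prod_{t\in D}P_t\mid q-1$ (the $P_t$ being distinct primes dividing $q-1$), and hence that if $a\equiv_{q-1}b$ with $a,b>0$ then $P_i\mid a$ iff $P_i\mid b$, and $T\mid a$ iff $T\mid b$. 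Two consequences: membership in $\mathcal{M}$ depends only on the multiset of nonzero exponents of $g$, so $\mathcal{M}$ is invariant under permuting variables; and $\mathcal{M}$ is closed under equivalent monomials.

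\emph{The ``if'' direction ($\mathcal{M}\subseteq\addCo{D}$).} Let $x_1^{\alpha(1)}\cdots x_n^{\alpha(n)}\in\mathcal{M}$ with all $\alpha(j)\in\N$. If alternative (i) holds with $i\in D$, then each $\alpha(j)/P_i\in\N$, and substituting the monomials $x_j^{\alpha(j)/P_i}\in\addCo{D}$ for the variables $x_j$ of the generator $x_1^{P_i}\cdots x_n^{P_i}$ produces $x_1^{\alpha(1)}\cdots x_n^{\alpha(n)}$. If alternative (ii) holds, then after a permutation of variables $T\mid\alpha(j)$ for $2\le j\le n$; Lemma~\ref{lem:combRule} applied to the generator $x_1x_2^{T}$ yields $x_1x_2^{T}x_3^{T}\cdots x_n^{T}\in\addCo{D}$, and substituting $x_1^{\alpha(1)}$ for $x_1$ and $x_j^{\alpha(j)/T}$ for $x_j$ ($2\le j\le n$), all of which lie in $\addCo{D}$, produces the desired monomial. (For $n=1$ the monomial is a generator $x_1^{\alpha(1)}$.)

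\emph{The ``only if'' direction ($\addCo{D}\subseteq\mathcal{M}$).} It suffices to show $\mathcal{M}$ is a monomial clone containing all generators of $\addCo{D}$. The projections $x_j$, the generators $x_1^t$ (a single exponent), and $x_1x_2^{T}$ (whose only exponent not divisible by $T$ is $1$) all satisfy (ii), while the generators $x_1^{P_i}\cdots x_n^{P_i}$ satisfy (i). Closure under equivalent monomials was noted above, so only closure under substitution remains. Let $m=\prod_{k=1}^n x_k^{\alpha(k)}\in\mathcal{M}$ with all $\alpha(k)\in\N$, let $m_1,\dots,m_n\in\mathcal{M}$ with $m_k=\prod_j x_j^{\beta_k(j)}$, and put $\gamma(j):=\sum_{k=1}^n\alpha(k)\beta_k(j)$, so that $m(m_1,\dots,m_n)=\prod_j x_j^{\gamma(j)}$. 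If $m$ satisfies (i), say $P_{i_0}\mid\alpha(k)$ for all $k$ with $i_0\in D$, then $P_{i_0}\mid\gamma(j)$ for every $j$ and the composite satisfies (i). Otherwise $m$ satisfies (ii) with exactly one exceptional exponent; reordering the pairs $(\alpha(k),m_k)$ (legitimate by permutation-invariance of $\mathcal{M}$) we may assume $T\mid\alpha(k)$ for $2\le k\le n$, whence $\gamma(j)\equiv\alpha(1)\beta_1(j)\pmod{T}$ for every $j$. Now split on $m_1$: if $m_1$ satisfies (i), say $P_{i_1}\mid\beta_1(j)$ for all $j$ with $i_1\in D$, then each $\gamma(j)$ is divisible by $P_{i_1}$ (the summand $\alpha(1)\beta_1(j)$ by $P_{i_1}$, and the summands $\alpha(k)\beta_k(j)$ with $k\ge2$ by $T$, hence by $P_{i_1}$ since $i_1\in D$), so the composite satisfies (i); if $m_1$ satisfies (ii), say $T\mid\beta_1(j)$ for all $j\ne j_0$, then $T\mid\gamma(j)$ for every $j\ne j_0$, so at most one nonzero exponent of the composite (namely $\gamma(j_0)$) fails to be divisible by $T$, and the composite satisfies (ii). In every case $m(m_1,\dots,m_n)\in\mathcal{M}$, so $\mathcal{M}$ is a monomial clone and $\addCo{D}\subseteq\mathcal{M}$.

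Combining the two inclusions gives $\addCo{D}=\mathcal{M}$, which is the asserted equivalence. The step that requires real care is the closure-under-substitution argument: one must track how conditions (i) and (ii) propagate through the composite exponents $\gamma(j)$, using the chain $P_i\mid T\mid q-1$ to pass between ``divisible by $T$'' and ``divisible by some $P_i$'', together with the permutation reduction that places the single exceptional exponent of $m$ in the first coordinate and the ensuing sub-split on the structure of $m_1$.
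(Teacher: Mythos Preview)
Your proof is correct and follows essentially the same approach as the paper's: you define the candidate set $\mathcal{M}$ (the paper calls it $C$), verify that it is a monomial clone by the identical two-level case split on whether the outer monomial satisfies (i) or (ii) and then, in case (ii), whether the substituted monomial at the exceptional position satisfies (i) or (ii); and you show $\mathcal{M}\subseteq\addCo{D}$ by building each monomial directly from the generators via Lemma~\ref{lem:combRule} and substitution of width-$1$ monomials, which is exactly what the paper does in its Claim~2 (the paper additionally routes through a $q-1$--padding and Lemma~\ref{lem:cutQ1} to handle zero exponents, but since the lemma is stated for $\alpha(j)\in\N$ your direct construction suffices).
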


\begin{proof}
Let $T:= \prod_{t\in D} P_t$.
Now we define
\begin{align*}
C := \{\prod_{i = 1}^n x_i^{\alpha(i)} \mid &n\in \N, \exists j\leq n\colon \alpha(j) \neq 0, 
(\exists i \in D \forall j \leq n\colon  \alpha(j) \equiv_{P_i} 0) \vee\\&
(\exists j\leq n \forall i\in \{1, \ldots, n\}\setminus \{j\}\colon \alpha(i)\equiv_{T} 0 )\}.
\end{align*}
We show that 
\begin{equation}\label{cl:addCo1}
\tag{Claim 1}
C \text{ is a monomial clone on } \F_q.
\end{equation}
We easily see that $\genMClo{\{x_1\}} \subseteq C$,
and $C$ is closed under equivalent monomials, since for all $i\leq l$, $P_i$ divides $q-1$.
Now we show that $C$ is closed under substitution of monomials.
Let $m= \prod_{i=1}^n x_i^{\alpha(i)} \in C$ and let $m_1, \ldots, m_n \in C$.
Now we show that $m' := m (m_1, \ldots, m_n)$ lies in $C$.
Now let $n'\in\N$ and let $\beta(1), \ldots, \beta(n') \in \N_0$ be
such that $\prod_{i=1}^{n'}x_i^{\beta(i)} = m'$.

Case 1:
There exists an $i\in D$ such that for all $j\leq n$ we have
$\alpha(j) \equiv_{P_i} 0$:
Then we see that for all $j\leq n'$ we have
$\beta(j) \equiv_{P_i} 0$, and thus $m' \in C$.

Case 2: 
There exists $j\leq n$ such that for all $i\in \{1, \ldots, n\}\setminus\{j\}$
we have $\alpha(i)\equiv_{T} 0$:
Let $n_j \in\N$ and let $\gamma(1), \ldots, \gamma(n_j) \in \N$ be
such that $\prod_{i=1}^{n_j} x_i^{\gamma(i)} = m_j$.

Case 2a: 
There exists an $i\in D$ such that for all $j\leq n_j$ we have
$\gamma(j) \equiv_{P_i} 0$:
We see that for all $j\leq n'$ we have
$\beta(j) \equiv_{P_i} 0$, since $P_i$ divides $T$, and thus $m' \in C$.

Case 2b: 
There exists $j'\leq n_j$ such that for all $i\in \{1, \ldots, n_j\}\setminus\{j'\}$
we have $\gamma(i)\equiv_{T} 0$:
We see that for all $i\in \{1, \ldots, n'\}\setminus\{j'\}$ we have
$\beta(i)\equiv_{T} 0$, and thus $m' \in C$.

Hence $C$ is closed under substitution of monomials, and thus \ref{cl:addCo1} holds.

The next goal is to show that 
\begin{equation}\label{cl:addCo2}
\tag{Claim 2}
 \addCo{D} = C.
\end{equation}

``$\subseteq$'' is clear since the generators of $\addCo{D}$ lie in $C$.
For ``$\supseteq$'' we observe the following.
Let $m = \prod_{i=1}^n x_i^{\alpha(i)} \in C$.
We show that $m\in \addCo{D}$.
This is certainly true for $n=1$, so assume $n\geq 2$.
Let $I:= \{i \in \N \mid i\leq n, \alpha(i)=0\}$.
We have $J:=\{1, \ldots, n\}\setminus I \neq \emptyset$.
Let $m' := \prod_{i \in J } x_i^{\alpha(i)}\prod_{i\in I}x_i^{q-1}$.
Now let $\beta(1), \ldots, \beta(n) \in \N$ be
such that for all $i\in J$ we have $\beta(i) = \alpha(i)$
and for all $i\in I$ we have $\beta(i) =q-1$.
Then $m'= \prod_{i=1}^n x_i^{\beta(i)}$.
Now we show that $m' \in \addCo{D}$.

Case 1.
There exists an $i\in D$ such that for all $j\leq n$ we have
$\alpha(j) \equiv_{P_i} 0$:
Since $q-1 \equiv_{P_i} 0$ we have
for all $j\leq n$ that
$\beta(j) \equiv_{P_i} 0$:
We easily see that $m' \in \genMClo{\{x_1^t, x_1^{P_i} \cdots x_n^{P_i} \mid t\in \N\}} \subseteq \addCo{D}$.

Case 2. 
There exists $j\leq n$ such that for all $i\in \{1, \ldots, n\}\setminus\{j\}$
we have $\alpha(i)\equiv_{T} 0$:
Since $q-1 \equiv_{T} 0$ we have
for all $i\in \{1, \ldots, n\}\setminus\{j\}$ that
$\beta(i)\equiv_{T} 0$.
We see by Lemma~\ref{lem:combRule} and by permuting variables that
$m'\in \genMClo{\{x_1^t, x_1 x_2^T \mid t\in \N\}}\subseteq \addCo{D}$.

Since $m' \in \addCo{D}$, we get by Lemma \ref{lem:cutQ1} that $m \in \addCo{D}$, and thus 
\ref{cl:addCo2} holds.

\end{proof}

\begin{lem}\label{lem:noOthCo}
Let $q>2$ be a prime power, let $l\in\N$ and let $P_1<\ldots < P_l$ be the primes that divide $q-1$.
Let $C$ be a monomial clone on $\F_q$.
We assume that for all nonempty $D \subseteq \{1, \ldots, l\}$,
$C \not \subseteq \addCo{D}$.
Then $C$ contains the monomial $x_1 \cdots x_q$.
\end{lem}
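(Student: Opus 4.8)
The plan is to prove the contrapositive: assuming $x_1\cdots x_q\notin C$, I would produce a nonempty $D\subseteq\{1,\dots,l\}$ with $C\subseteq\addCo{D}$.

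The decisive first step is a consequence of Lemma~\ref{lem:3gcdx1xq}: since $x_1\cdots x_q\notin C$, every monomial $m=\prod_{i=1}^{n}x_i^{\alpha(i)}\in\overline C$ with $n\geq 2$ must admit a prime $P\mid q-1$ dividing at least $n-1$ of its exponents. Indeed, otherwise no prime dividing $q-1$ divides $n-1$ of the $\alpha(i)$, so that for each $(n-1)$-element $E\subseteq\{1,\dots,n\}$ we have $\gcd(\{\alpha(i):i\in E\}\cup\{q-1\})=1$, and Lemma~\ref{lem:3gcdx1xq} applied to $m$ with all $n$ variables as the main variables (and auxiliary exponent $\gamma=0$) would give $x_1\cdots x_q\in C$, a contradiction. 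Identifying all variables of $m$ outside a chosen set $S$ of size $\geq 2$ into a single fresh variable and arguing the same way via Lemma~\ref{lem:getMonWithExp} yields the sharper form: for every such $m$ and every $S$ with $|S|\geq 2$, some prime divides all but at most one of the exponents indexed by $S$.

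Call a width-$\geq 2$ monomial of $\overline C$ of \emph{Type~1} if some prime $P\mid q-1$ divides all of its exponents; by the first step, a width-$\geq 2$ monomial that is not of Type~1 has a prime dividing exactly all but one of its exponents. I would then split into three cases. If $\overline C$ has no width-$\geq 2$ monomial, then $C=\Delta$ and any nonempty $D$ works. If every width-$\geq 2$ monomial of $\overline C$ is of Type~1, let $D$ be the (nonempty) set of primes that divide every exponent of some width-$\geq 2$ monomial in $\overline C$; then each width-$\geq 2$ monomial lies in $\addCo{D}$ by the first alternative of Lemma~\ref{lem:addCo}, width-$\leq 1$ monomials lie in $\addCo{D}$ unconditionally, and hence $C\subseteq\addCo{D}$. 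In the remaining case $\overline C$ contains a width-$\geq 2$ monomial $m_0$ on which some fixed prime $P_t$ divides exactly all but one exponent while no prime divides all of them; I claim that here $D:=\{t\}$ works, that is, \emph{every} width-$\geq 2$ monomial of $\overline C$ has $P_t$ dividing all but at most one of its exponents, which by Lemma~\ref{lem:addCo} gives $C\subseteq\addCo{\{t\}}$.

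Establishing this last claim is the heart of the matter and the step I expect to be the main obstacle. Suppose for contradiction that some $m'\in\overline C$ has $P_t$ failing to divide at least two of its exponents. Writing $m_0=x_1^{\alpha(1)}\cdots x_w^{\alpha(w)}$ with $P_t\mid\alpha(i)$ for $i<w$ and $P_t\nmid\alpha(w)$, I would substitute $m'$ on fresh variables into the exceptional slot $x_w$ of $m_0$; the resulting monomial of $C$ carries the $w-1$ exponents $\alpha(1),\dots,\alpha(w-1)$ divisible by $P_t$ together with the products $\alpha(w)\cdot e$, where $e$ runs over the exponents of $m'$, and since $P_t\nmid\alpha(w)$ at least two of these products are still not divisible by $P_t$. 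Iterating such substitutions, using Lemma~\ref{lem:dup2} where necessary to increase the supply of variables whose exponent is prime to $P_t$, and bringing in the witness monomials supplied by the hypotheses $C\not\subseteq\addCo{D'}$ for the relevant larger sets $D'$ — chosen so that multiplying exponents never spoils a non-divisibility already secured — one builds a monomial in $C$ that has, for \emph{every} prime $P\mid q-1$ simultaneously, at least two exponents not divisible by $P$; Lemma~\ref{lem:3gcdx1xq} then forces $x_1\cdots x_q\in C$, contradicting the standing assumption. The delicate point throughout is precisely that composition multiplies exponents, so the substitution slots must be selected with care; the remaining bookkeeping — the case $w=2$, the handling of the auxiliary variable in Lemma~\ref{lem:3gcdx1xq}, and width reductions via Lemmas~\ref{lem:cutQ1} and~\ref{lem:getMonWithExp} — is routine.
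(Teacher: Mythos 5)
Your high-level structure is sensible and does differ from the paper's: the paper gives a direct proof, defining the subset $C'$ of monomials in which every prime $P_i$ misses at least one exponent, introducing the quantity $d(m):=\max_i \gcd(\{\alpha(j):j\neq i\}\cup\{P_1\cdots P_l\})$, picking $m\in C'$ minimizing $d(m)$, and then either invoking Lemma~\ref{lem:3gcdx1xq} when $d(m)=1$ or using a witness from the hypothesis $C\not\subseteq\addCo{D}$ to drive $d$ strictly down, contradicting minimality. Your contrapositive set-up, the observation via Lemma~\ref{lem:3gcdx1xq} that every width-$\geq 2$ monomial of $\overline C$ has a prime dividing at least all but one of its exponents, and Cases~1 and~2 are all fine.

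The problem is Case~3, and it is not merely a bookkeeping gap: the claim that $D=\{t\}$ works for an \emph{arbitrary} prime $P_t$ that divides all but one exponent of your chosen non-Type-1 $m_0$ is false, and the proposed contradiction cannot be run. Concretely, take $q=31$ (so $q-1=30$, $P_1=2<P_2=3<P_3=5$), $m_0=x_1^2x_2^3$ and $m_1=x_1^9x_2^{25}$, and let $C=\genMClo{\{m_0,m_1\}}$. Reducing exponents modulo $3$, the linear maps $\varphi(m_0)=2x+3y$ and $\varphi(m_1)=9x+25y$ become $2x$ and $y$ on $\Z_3$, which together with the projections generate only essentially unary maps; hence every monomial in $C$ has at most one exponent not divisible by $3$, so $C\subseteq\addCo{\{2\}}$ and in particular $x_1\cdots x_{31}\notin C$. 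Now $m_0$ is not Type~1, and $P_1=2$ divides all but one of its exponents, so under your recipe one may choose $t=1$; but $m_1$ has both of its exponents odd, i.e.\ $m_1\notin\addCo{\{1\}}$, so $C\not\subseteq\addCo{\{1\}}$, and the ``contradiction'' you would try to derive from $m_0$ and $m'=m_1$ cannot exist. Moreover your sketch of that derivation invokes ``the witness monomials supplied by the hypotheses $C\not\subseteq\addCo{D'}$ for the relevant larger sets $D'$,'' but in the contrapositive argument those hypotheses are precisely what you do \emph{not} have: you are trying to prove that some containment $C\subseteq\addCo{D}$ holds, so you cannot assume its failure for other sets $D'$. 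This is the combinatorial difficulty the paper's minimality argument is designed to handle; a correct contrapositive would have to identify the right (possibly non-singleton) $D$ from a more global property of $\overline C$ rather than from a single monomial $m_0$.
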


\begin{proof}
 If for all $m\in C$ of width greater than $1$ there exists an $i\leq l$ such that $P_i$ divides all exponents of $m$,
then we have by Lemma \ref{lem:addCo} that $C \subseteq \addCo{\{1,\ldots, l\}}$, which is a contradiction to our assumption that
$C \not \subseteq \addCo{\{1,\ldots, l\}}$.
Hence, the set $C'$ defined by
\begin{align*}
 C' := \{x_1^{\alpha(1)}\cdots x_n^{\alpha(n)}\in C \mid &n>1, \alpha(1), \ldots, \alpha(n)\in \N, \\ &\forall i\leq l \; \exists j\leq n\colon P_i \textmd{ does not divide } \alpha(j)\}
\end{align*}
is nonempty.

Let $m = x_1^{\alpha(1)}\cdots x_n^{\alpha(n)}$ be a monomial with $\alpha(1), \ldots, \alpha(n)\in \N$.
Then we define for $i\in \{1, \ldots, n\}$,
$$d'(m, i) := \gcd(\{\alpha(j) \mid j \in \{1, \ldots, n\}\setminus\{i\} \} \cup \{P_1 \cdots P_l\}).$$
Now let 
$$d(m) := \max(\{d'(m,i) \mid i \in \{1, \ldots, n\}\}).$$
We have that $d(m)$ is equal to $1$ or equal to a product of distinct primes that divides $q-1$.
Now we assume that 
$m = x_1^{\alpha(1)}\cdots x_n^{\alpha(n)} \in C'$ has the property that 
for all $m' \in C'$ we have
\begin{equation}\label{prop:coatom1}
  \tag{P1}
 d(m) \leq d(m').
\end{equation}
This means that $d(m)$ is minimal with respect to $\leq$.

Case 1. We assume that $d(m)=1$:
Since $d(m) = \max(\{d'(m,i) \mid i \in \{1, \ldots, n\}\})$ and $P_1, \ldots, P_l$ are all primes that divide $q-1$,
we have
 $\gcd(\{\alpha(i)\mid i \in E\} \cup \{ q-1\} ) = 1$ for all $E \subseteq \{1,\ldots, n\}$ with $\lvert E \rvert = n-1$.
Since $n>1$, Lemma~\ref{lem:3gcdx1xq} now yields $x_1\cdots x_q \in C$.

Case 2. Now we assume $1 < d(m) \leq P_1\cdots P_l$:
We show that this assumption leads to a contradiction to the minimality of $d(m)$. 
Let $k \leq n$ be
such that
$d(m)=\gcd(\alpha(1), \ldots, \alpha(k-1), \alpha(k+1),\ldots, \alpha(n), P_1\cdots P_l)$.
Let $D \subseteq \{1,\ldots, l\}$ be such that $d(m) = \prod_{i\in D} P_i$.
Since $d(m) > 1$, we have that $D \neq \emptyset$, 
and thus we have by Lemma~\ref{lem:addCo} that $m \in \addCo{D}$.
Since $m\in C'$, we have for all $i\in D$ that $P_i$ does not divide $\alpha(k)$,
which means $\gcd(\alpha(k), P_i) = 1$ for all $i \in D$,
and thus $\gcd(\alpha(k), d(m)) = 1$.
We define $D' := \{1, \ldots, l\}\setminus D$.
For all $i'\in D'$
there exists a $j\leq n$ with $j\neq k$
such that $\gcd(\alpha(j), P_{i'}) = 1$,
since $d(m)=\gcd(\alpha(1), \ldots, \alpha(k-1), \alpha(k+1),\ldots, \alpha(n), P_1\cdots P_l) = \prod_{i\in D} P_i$.

Case 2a. We assume that $C' \subseteq \addCo{D}$:
Since $C \not \subseteq \addCo{D}$,
there is some $m_1 = x_1^{\beta(1)} \cdots x_{n_1}^{\beta(n_1)}\in C$
with $\beta(1), \ldots, \beta(n_1)\in \N$
such that $m_1  \not \in \addCo{D}$.
 By substitution of monomials we get 
 \begin{align*}
 m_2(x_1, \ldots, x_{n +n_1 -1}) &:= m(x_1 , \ldots x_{k-1}, m_1(x_k, \ldots, x_{k+n_1-1}) ,x_{k+n_1}, \ldots, x_{n +n_1 -1}) \\
  &= \prod_{i=1}^{k-1} x_i^{\alpha(i)} \prod_{j=1}^{n_1} x_{k+j-1}^{\alpha(k)\beta(j)} \prod_{i=k+1}^n x_{i+n_1-1}^{\alpha(i)} \in C.
 \end{align*}
 We show that
 \begin{equation}\label{cl:coatom1}
  \tag{Claim 1} 
  m_2 \not \in \addCo{D}.
 \end{equation}
 Since $m_1 \not \in \addCo{D}$ we know by Lemma \ref{lem:addCo} that
there are different $j_1,j_2\leq n_1$ such that
$\gcd(\beta(j_1), d(m))< d(m)$ and $\gcd(\beta(j_2), d(m))<d(m)$
and for all $i \in D$ there exists a 
$J_i \leq n_1$
such that $\gcd(\beta(J_i), P_i) = 1$.
 We have for all $i\in D$ that
 $\gcd(\alpha(k), P_i) = 1$,
 and thus we have $\gcd(\alpha(k)\beta(j_1), d(m) ) < d(m)$
 and $\gcd(\alpha(k)\beta(j_2), d(m)) < d(m)$.
 Furthermore, for all $i \in D$
 we have $\gcd(\alpha(k) \beta(J_i),P_i) = 1$.
 Since $\{\alpha(k) \beta(j) \mid j\leq n_1\}$ 
 are exponents of $m_2$,
 we get by Lemma \ref{lem:addCo} that $m_2 \not \in \addCo{D}$,
 which proves \ref{cl:coatom1}.
 
 Now we show that
 \begin{equation}\label{cl:coatom2}
  \tag{Claim 2}
  m_2 \in C'.
 \end{equation}
  Since $\alpha(1) , \ldots, \alpha(n), \beta(1), \ldots, \beta(n_1) \in \N$ we have
 for each $j\in \{1, \ldots, n+n_1 -1\}$ that the exponent of the variable $x_j$ is not equal to $0$.
 Let $i \in \{1, \ldots, l\}$.
 If $i \in D'$,
 then there is a $J_i \leq n$ with $J_i\neq k$ such that
 $\gcd(\alpha(J_i), P_i)=1$.
 If $i\in D$, then there is a 
$J_i \leq n_1$ such that $\gcd(\beta(J_i), P_i) = 1$,
 and thus $\gcd(\alpha(k)\beta(J_i), P_i) = 1$.
 Since $\{\alpha(j)\mid j\leq n, j\neq k\} \cup \{\alpha(k)\beta(j) \mid j\leq n_1\}$
 are exponents of $m_2$, we have that $m_2 \in C'$,
 which proves \ref{cl:coatom2}.
 
 By \ref{cl:coatom1} we have $m_2 \not \in \addCo{D}$ and 
 by \ref{cl:coatom2} we have $m_2 \in C'$.
 This contradicts the assumption of Case 2a.

Case 2b. 
There exists $m_1 = x_1^{\beta(1)}\cdots x_{n_1}^{\beta(n_1)} \in C'$
with $m_1 \not \in \addCo{D}$:
Since $m_1\in C'$ we have that $\beta(1), \ldots, \beta(n_1) \in \N$, and 
since $m_1 \not \in \addCo{D}$ we get
by Lemma \ref{lem:addCo} that  for
all $j_1 \leq n_1$ we have
\begin{equation}\label{prop:coatom2}
\tag{P2}
 \gcd(d'(m_1, j_1), d(m)) < d(m).
\end{equation}
By substitution of monomials we get
\begin{align*}
m_2&(x_1, \ldots, x_{n_1\cdot n}) := \\& 
m(m_1(x_1, \ldots, x_{n_1}), m_1(x_{n_1 +1 }, \ldots, x_{2 \cdot n_1}), \ldots, m_1(x_{n_1(n-1)+1}, \ldots, x_{n_1 \cdot n})) \\
&= \prod_{i=1}^n \prod_{j=1}^{n_1} x_{(i-1)\cdot n_1+j}^{\alpha(i)\beta(j)}\in C. 
\end{align*}
Since $m$ and $m_1$ lie in $C'$, we have that 
$\gcd(\alpha(1), \ldots, \alpha(n),P_1\cdots P_l) = 1 $ and $\gcd(\beta(1), \ldots, \beta(n_1),P_1\cdots P_l)= 1$.
Let 
$j'\leq n$ and $j_1' \leq n_1$ be
such that
$d(m_2) = \gcd(\{\alpha(j)\beta(j_1) \mid j \leq n, j_1 \leq n_1, (j,j_1) \neq (j',j_1')\}\cup \{P_1\cdots P_l\})$.

For all $j_1\leq n_1$ we have
$$\gcd(\{\beta(j_1)\alpha(j) \mid j\leq n\}\cup \{P_1\cdots P_l\}) = \gcd(\beta(j_1), P_1\cdots P_l),$$
since $\gcd(\alpha(1), \ldots, \alpha(n), P_1\cdots P_l) = 1$.
Hence we get for all $j_1 \leq n_1$ with $j_1 \neq j_1'$ that
$d(m_2)$ divides $\beta(j_1)$,
and thus $d(m_2)$ divides $d'(m_1, j_1')$.
Similarly, we have for all $j\leq n$ that
$$\gcd(\{\beta(j_1)\alpha(j) \mid j_1\leq n_1\}\cup \{P_1\cdots P_l\}) = \gcd(\alpha(j), P_1\cdots P_l),$$
since $\gcd(\beta(1), \ldots, \beta(n_1), P_1\cdots P_l) = 1$.
Hence we get for all $j \leq n$ with $j \neq j'$ that
$d(m_2)$ divides $\alpha(j)$,
and thus $d(m_2)$ divides $d'(m, j')$.

This means $d(m_2)= \gcd(d'(m, j'), d(m_2))= \gcd(d'(m_1, j_1'), d(m_2))$.
Since $d(m_2)=\gcd(d'(m, j'), d(m_2))$ and $d'(m,j') \leq d(m)$
we get that
\begin{equation}\label{eq:coatomfin1}
 d(m_2) = \gcd(d'(m,j'),d(m_2)) \leq d(m).
\end{equation}
By \eqref{prop:coatom2} we have $\gcd(d'(m_1,j_1'),d(m)) < d(m)$,
and since $\gcd(d'(m_1, j_1'), d(m_2))=d(m_2)$ we get that
\begin{equation}\label{eq:coatomfin2}
 \gcd(d(m_2),d(m)) = \gcd(d(m_2),d'(m_1,j_1'),d(m)) < d(m).
\end{equation}

By \eqref{eq:coatomfin1} and \eqref{eq:coatomfin2}
we get that $d(m_2) < d(m)$.
As $m_2 \in C'$, this contradicts \eqref{prop:coatom1}.

This finishes the proof.
\end{proof}

The next theorem gives a description of all coatoms of the lattice of monomial clones on $\F_q$.

\begin{thm}\label{thm:coatoms}
Let $q>2$ be a prime power, let $l\in\N$ and let $P_1<\ldots < P_l$ be the primes that divide $q-1$.
 Then the coatoms of the lattice of monomial clones are given by
 $\genMClo{\{x_1 \cdots x_{1+P_i}\}}$ for $i\leq l$,
 and by $\addCo{D}$ for each nonempty $D \subseteq \{1,\ldots, l\}$.
\end{thm}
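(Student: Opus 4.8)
Essentially all of the work is already contained in Lemma~\ref{lem:addCo}, Lemma~\ref{lem:3gcdx1xq}, Lemma~\ref{lem:noOthCo} and Theorem~\ref{thm:topIsoDivLat}; the plan is to assemble these. I will use two facts repeatedly. First, Lemma~\ref{lem:noOthCo} gives the dichotomy: every monomial clone $C$ on $\F_q$ either satisfies $C\subseteq\addCo{D}$ for some nonempty $D\subseteq\{1,\dots,l\}$, or contains $x_1\cdots x_q$. Second, by Theorem~\ref{thm:topIsoDivLat} any monomial clone $E$ with $x_1\cdots x_q\in E$ lies in $\mathcal{X}(q)=[\genMClo{\{x_1\cdots x_q\}},\nabla]$ and equals $\mathcal{C}(a)$ for a unique divisor $a$ of $q-1$, and $\mathcal{C}$ reverses order ($b\mid a\iff\mathcal{C}(a)\subseteq\mathcal{C}(b)$). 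I would begin by recording three observations, all immediate from Lemma~\ref{lem:addCo}: (i) $x_1x_2\notin\addCo{D}$ for every nonempty $D$, since no $P_i\geq 2$ and no $T:=\prod_{t\in D}P_t\geq 2$ divides $1$; hence $\addCo{D}\neq\nabla$. (ii) For distinct nonempty $D,D'$ we have $\addCo{D}\not\subseteq\addCo{D'}$: the generator $x_1^{P_i}\cdots x_n^{P_i}$ ($n\geq 2$, $i\in D$) lies in $\addCo{D'}$ only if $i\in D'$ (forcing $D\subseteq D'$), while $x_1x_2^{T}$ lies in $\addCo{D'}$ only if $\prod_{t\in D'}P_t\mid T$, i.e.\ $D'\subseteq D$. (iii) $x_1\cdots x_q\notin\addCo{D}$ (width $q$, all exponents $1$), whereas $\genMClo{\{x_1\cdots x_{1+P_i}\}}$ does contain $x_1\cdots x_q$ (as in the proof of Lemma~\ref{lem:x1xkThenAllK}, since $1+P_i\geq 3$); so the two advertised families are disjoint, and with injectivity of $\mathcal{C}$ and (ii) the list is repetition-free.

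Next I would check that the listed clones are coatoms. For $\genMClo{\{x_1\cdots x_{1+P_i}\}}=\mathcal{C}(P_i)$: it differs from $\mathcal{C}(1)=\nabla$ because $P_i\neq 1$; and if $E\supseteq\mathcal{C}(P_i)$ then $x_1\cdots x_q\in E$ by (iii), so $E=\mathcal{C}(a)$ with $\mathcal{C}(P_i)\subseteq\mathcal{C}(a)$, hence $a\mid P_i$ by Theorem~\ref{thm:topIsoDivLat}, giving $a\in\{1,P_i\}$ and $E\in\{\nabla,\mathcal{C}(P_i)\}$. Thus $\mathcal{C}(P_i)$ is a coatom.

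For $\addCo{D}$: it is $\neq\nabla$ by (i). Let $E\supsetneq\addCo{D}$. Then $E\not\subseteq\addCo{D'}$ for every nonempty $D'\subseteq\{1,\dots,l\}$ — for $D'\neq D$ because $\addCo{D}\subseteq E$ and (ii), and for $D'=D$ because the inclusion $\addCo{D}\subseteq E$ is proper. Hence Lemma~\ref{lem:noOthCo} yields $x_1\cdots x_q\in E$, so $E=\mathcal{C}(a)$ for some $a\mid q-1$. Fixing $i\in D$, the monomials $x_1^{P_i}\cdots x_n^{P_i}$ lie in $\addCo{D}\subseteq\mathcal{C}(a)$ for every $n\in\N$; since by Lemma~\ref{lem:x1xkThenAllK} the clone $\mathcal{C}(a)$ consists exactly of the monomials whose exponent sum is $\equiv 1\pmod a$, comparing $n=1$ with $n=2$ forces $a\mid P_i$ together with $P_i\equiv 1\pmod a$, so $a=1$ and $E=\nabla$. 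Thus $\addCo{D}$ is a coatom.

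Finally I would show there are no further coatoms. Let $C$ be a coatom of the lattice of monomial clones on $\F_q$. By the dichotomy, either $C\subseteq\addCo{D}$ for some nonempty $D$ — and then, since $\addCo{D}\neq\nabla$ and $C$ is a coatom, this inclusion cannot be proper, so $C=\addCo{D}$ — or $x_1\cdots x_q\in C$, so $C=\mathcal{C}(a)$ and $\genMClo{\{x_1\cdots x_q\}}\subseteq C\subseteq\nabla$; since every monomial clone containing $C$ also contains $x_1\cdots x_q$ and hence lies in $\mathcal{X}(q)$, the fact that $C$ is a coatom of the whole lattice means $C$ is covered by $\nabla$ in $\mathcal{X}(q)$, and because $\mathcal{C}$ is an order-reversing lattice isomorphism $\mathcal{D}(q-1)\to\mathcal{X}(q)$ this forces $a$ to be an atom of $\mathcal{D}(q-1)$, i.e.\ $a=P_i$ for some $i\leq l$, so $C=\genMClo{\{x_1\cdots x_{1+P_i}\}}$. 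This completes the argument. The only genuinely delicate points are the generator bookkeeping in observation (ii) and keeping the distinction between ``coatom of $\mathcal{X}(q)$'' and ``coatom of $\monClones{q}$'' straight; the latter is harmless precisely because $\mathcal{X}(q)$ is upward closed above its least element $\genMClo{\{x_1\cdots x_q\}}$.
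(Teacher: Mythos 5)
Your proof is correct, and it takes a genuinely different (and cleaner) route than the paper's at the crucial step of showing each $\addCo{D}$ is a coatom. The paper proves that step by hand: it picks $m\in C\setminus\addCo{D}$ and does a two-case computation (depending on whether some exponent of $m$ is coprime to $T$), building up $x_1x_2\in C$ by explicit substitutions, variable identifications, and two applications of B\'{e}zout. Lemma~\ref{lem:noOthCo} is invoked in the paper only at the very end, to rule out further coatoms. You have noticed that Lemma~\ref{lem:noOthCo} is strong enough to carry the coatom-hood of $\addCo{D}$ as well: from $E\supsetneq\addCo{D}$ and the pairwise incomparability of the $\addCo{D'}$ you deduce $E\not\subseteq\addCo{D'}$ for every nonempty $D'$, so $x_1\cdots x_q\in E$ and then $E=\mathcal{C}(a)$ for some divisor $a$; evaluating the exponent-sum congruence of Lemma~\ref{lem:x1xkThenAllK} on $x_1^{P_i}$ and $x_1^{P_i}x_2^{P_i}$ for a fixed $i\in D$ forces $a=1$, i.e.\ $E=\nabla$. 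That entirely replaces the paper's Cases 1 and 2. Your argument also makes explicit a point the paper glosses over in its last paragraph: when $C$ contains $x_1\cdots x_q$, every monomial clone above $C$ still lies in $\mathcal{X}(q)$, so ``coatom of $\monClones{q}$'' and ``coatom of $\mathcal{X}(q)$'' coincide there; you say this, the paper leaves it implicit. The trade-off is that your route is shorter and more conceptual, but leans entirely on Lemma~\ref{lem:noOthCo}, which is itself the heaviest lemma in the section; the paper's Case 1/Case 2 gives an independent, self-contained derivation of $x_1x_2\in C$ that would survive even if one weakened Lemma~\ref{lem:noOthCo}. Both routes are sound.
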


\begin{proof}
By Theorem~\ref{thm:topIsoDivLat} we know that for $i\leq l$,
$\genMClo{\{x_1 \cdots x_{1+P_i}\}}$ is a coatom.

Now let $D \subseteq \{1,\ldots, l\}$ be nonempty.
First, we see that the monomial clone $\addCo{D}$ is different from
the monomial clones which we have described in Theorem~\ref{thm:topIsoDivLat},
since by Lemma~\ref{lem:addCo}, $\addCo{D}$ does not contain a monomial with two exponents which are relatively prime to $q-1$.
In particular, $x_1x_2\not \in \addCo{D}$, so $\addCo{D}\subset \genMClo{\{x_1x_2\}}$.

Now we show that for different nonempty sets $D, D' \subseteq \{1, \ldots, l\}$,
$\addCo{D}$ and $\addCo{D'}$ are incomparable.
First, we assume $D \not \subseteq D'$ and $D'\not \subseteq D$.
Then we have $t_1 \in D\setminus D'$ and $t_2\in D'\setminus D$.
We get by Lemma~\ref{lem:addCo} that $x_1^{P_{t_1}}x_2^{P_{t_1}}\in \addCo{D}$,
but $x_1^{P_{t_1}}x_2^{P_{t_1}}\not \in \addCo{D'}$.
On the other hand,
$x_1^{P_{t_2}}x_2^{P_{t_2}}\in \addCo{D'}$, but $x_1^{P_{t_2}}x_2^{P_{t_2}}\not\in \addCo{D}$,
and thus $\addCo{D}$ and $\addCo{D'}$ are incomparable.
Now let w.l.o.g. $D \subset D'$, and let $t\in D'\setminus D$.
Then by Lemma~\ref{lem:addCo}, $x_1^{P_t}x_2^{P_t} \not \in \addCo{D}$ and $x_1^{P_t}x_2^{P_t} \in \addCo{D'}$,
but $x_1x_2^{\prod_{i\in D}P_i}  \in \addCo{D}$ and $x_1x_2^{\prod_{i\in D}P_i} \not \in \addCo{D'}$.
Hence, $\addCo{D}$ and $\addCo{D'}$ are incomparable.

The next goal is to show that for $D \subseteq \{1,\ldots, l\}$ nonempty, $\addCo{D}$ is a coatom.
To this end,
let $C$ be a monomial clone on $\F_q$ with $\addCo{D} \subset C$.
Now we show that 
\begin{equation}\label{eq:coatomseq1}
 C = \genMClo{\{x_1x_2\}}.
\end{equation}
Let $T:= \prod_{i\in D} P_i$.
Let $m=x_1^{\alpha(1)}x_2^{\alpha(2)}\cdots x_n^{\alpha(n)} \in C \setminus \addCo{D}$ with $\alpha(1),\ldots, \alpha(n) \in \N$.
Then $n > 1$, since $\addCo{D}$ contains all monomials of width $1$.
By Lemma~\ref{lem:addCo},
$m$ has at least two exponents which are not divisible by $T$,
and for all $i \in D$ we have that there is a $j\leq n$ such that $P_i$ does not divide $\alpha(j)$.
Therefore we have $\gcd(\alpha(1), \ldots, \alpha(n), T) = 1$.

Case 1.
We assume that there is a $j\leq n$ such that $\gcd(\alpha(j), T) = 1$:
 Then there is a $j' \leq n$ with $j\neq j'$ such that $\gcd(\alpha(j'), T) < T$, since $m \not \in \addCo{D}$.
By permuting variables we assume w.l.o.g.\ that $j = 1$ and $j' = 2$.
We have that $x_1x_2^T \in \addCo{D} \subset C$, and thus $x_1x_{n+1}^T \in C$.
Now we plug $m$ into $x_1$ of $x_1x_{n+1}^T $, and get $(\prod_{i=1}^{n}x_i^{\alpha(i)})x_{n+1}^T \in C$.
Since all monomials of width $1$ are in $\addCo{D} \subset C$,
we get for 
all $t_1,t_2 \in \N$ that
$$x_1^{t_1\cdot \alpha(1)} (\prod_{i=2}^{n}x_{i}^{\alpha(i)}) x_{n+1}^{t_2\cdot T}\in C.$$
By identifying $x_{n+1}$ with $x_1$
we get 
for all $t_1, t_2 \in \N$ that
$$x_1^{t_1\cdot \alpha(1)+t_2\cdot T} (\prod_{i=2}^{n}x_{i}^{\alpha(i)}) \in C.$$
Since $\gcd(\alpha(1), T) = 1$, we have $x_1 \prod_{i=2}^n x_{i}^{\alpha(i)}\in C$ by shifting coefficients in B\'{e}zout's identity by suitable multiples of $q-1$
and closure of $C$ under equivalent monomials.
Then we get by variable identification that $x_1x_2^{\alpha(2)}x_3^{\gamma} \in C$ for some $\gamma \in \N_0$,
and thus we get by Lemma~\ref{lem:combRule}, variable permutation and identification that 
 there exists a $\gamma' \in \N_0$ such that
$x_1 x_2^{\alpha(2)} x_3^{\alpha(2)}x_{4}^{\gamma'} \in C$.
Since $\gcd(\alpha(2), T) < T$,
there is an $i \in D$
such that $P_i$ does not divide $\alpha(2)$.
We have $x_1^{P_i} x_2^{P_i} \in \addCo{D} \subset C$.
Now we get by permuting variables and by substitution of monomials 
that 
$$x_1^{P_i} x_{2}^{P_i} x_{3}^{\alpha(2)} x_{4}^{\alpha(2)}x_{5}^{\gamma'} \in C.$$
Since all monomials of width $1$ are in $\addCo{D} \subset C$,
we get for 
all $t_1,t_2 \in \N$ that
$$x_1^{t_1\cdot P_i} x_{2}^{t_1 \cdot P_i} x_{3}^{t_2\cdot \alpha(2)} x_{4}^{t_2 \cdot \alpha(2)}x_{5}^{\gamma'} \in C.$$
By identifying $x_3$ with $x_1$,
$x_4$ with $x_2$, and renaming $x_5$ to $x_3$,
we get 
for all $t_1, t_2 \in \N$ that
$$x_1^{t_1\cdot P_i + t_2 \cdot \alpha(2)} x_{2}^{t_1\cdot P_i + t_2 \cdot \alpha(2)} x_{3}^{\gamma'} \in C.$$
Since $\gcd(\alpha(2), P_i) = 1$, we have $x_1 x_2 x_3^{\gamma'} \in C$ by shifting coefficients in B\'{e}zout's identity by suitable multiples of $q-1$
and closure of $C$ under equivalent monomials.
Since $x_1^{q-1} \in C$,
we get $x_1x_2x_3^{\gamma' \cdot(q-1)} \in C$, and thus we get by Lemma \ref{lem:cutQ1} that $x_1x_2 \in C$.
Therefore $C= \genMClo{\{x_1x_2\}}$.

Case 2.
Now we assume that for all $j\leq n$ we have $\gcd(\alpha(j), T) \neq 1$:
For each $j\leq n$, we define $I(j) := \min(\{i \in \N \mid i\in D, P_i\text{ divides } \alpha(j)\})$ which is nonempty,
since there is an $i\in D$ such that $P_i$ divides $\alpha(j)$.
We have for all $j\leq n$,
that $h_j:=x_1^{P_{I(j)}}x_2^{P_{I(j)}} \in \addCo{D} \subset C$.
By substitution of monomials we get
that
\begin{align*}
 m'&:= m(h_1(x_1, x_{n+1}), h_2(x_2, x_{n+2}) \cdots, h_n(x_n,x_{2n})) \\&=
\prod_{j=1}^n x_j^{P_{I(j)}\cdot \alpha(j)} \prod_{j=1}^{n} x_{n+j}^{P_{I(j)}\cdot \alpha(j)} \in C.
\end{align*}
We have by Lemma \ref{lem:addCo} that
$x_1x_2^Tx_3^T \in \addCo{D} \subset C$, and thus $x_1x_{2n+1}^Tx_{2n+2}^T \in C$.
Now we plug $m'$ into $x_1$ of $x_1x_{2n+1}^Tx_{2n+2}^T$,
and get 
$$ \prod_{j=1}^n x_j^{ P_{I(j)}\cdot \alpha(j)} \prod_{j=1}^{n} x_{n+j}^{ P_{I(j)}\cdot \alpha(j)}x_{2n+1}^T x_{2n+2}^T \in C.$$
Since all monomials of width $1$ are in $C$,
we get for all $t_1, \ldots, t_{n+1} \in \N$
that 
$$\prod_{j=1}^n x_j^{t_j \cdot P_{I(j)}\cdot \alpha(j)} \prod_{j=1}^{n} x_{n+j}^{t_j \cdot P_{I(j)}\cdot \alpha(j)}x_{2n+1}^{t_{n+1}\cdot T} x_{2n+2}^{t_{n+1}\cdot T}  \in C.$$
By setting $x_1, \ldots, x_n, x_{2n+1}$ to $x_1$
and by setting $x_{n+1}, \ldots, x_{2n}, x_{2n+2}$ to $x_2$,
we get for all $t_1, \ldots, t_{n+1} \in \N$
that 
$$x_1^{\sum_{j=1}^n t_j \cdot P_{I(j)}\cdot \alpha(j)+t_{n+1}\cdot T} x_2^{\sum_{j=1}^n t_j \cdot P_{I(j)}\cdot \alpha(j)+t_{n+1}\cdot T} \in C.$$
We have $\gcd(\alpha(1), \ldots, \alpha(n), T) = 1$,
and for all $j\leq n$ we have 
$\gcd(\alpha(j), P_{I(j)}) = P_{I(j)}$,
and thus
$\gcd(P_{I(1)}\cdot \alpha(1), \ldots, P_{I(n)} \cdot \alpha(n), T) = 1$, since we have that $\gcd(\alpha(1), \ldots, \alpha(n), T) = 1$
and $P_{I(j)}\cdot \alpha(j)$ contains the same prime factors as $\alpha(j)$, only with different multiplicities.
Hence, by shifting coefficients in B\'{e}zout's lemma and closure of $C$ under equivalent monomials, $x_1x_2 \in C$, and thus $C= \genMClo{\{x_1x_2\}}$.

We proved \eqref{eq:coatomseq1},
and thus $\addCo{D}$ is a coatom.

Now we show that there are no other coatoms.
Let $C$ be a coatom of the lattice of monomial clones on $\F_q$ which is different from the coatoms given in the statement.
This means that for all  $\emptyset \neq D\subseteq \{1,\ldots, l\}$ we have $C \not \subseteq \addCo{D}$.
Now Lemma \ref{lem:noOthCo} yields $x_1\cdots x_q \in C$.
Since $C$ is a coatom, we get by Theorem \ref{thm:topIsoDivLat} that 
$C = \genMClo{\{x_1 \cdots x_{1+P_i}\}}$ for some $i\leq l$.
This contradicts that $C$ is a coatom which is different from the statement.
\end{proof}

All coatoms we have found in Theorem~\ref{thm:coatoms} are different and thus we obtain
the number of coatoms on $\F_q$.

\begin{cor}
 Let $q>2$ be a prime power, let $l\in\N$ be the number of different primes that divide $q-1$.
 Then there are $2^l -1  + l$ coatoms of the lattice of monomial clones on $\F_q$.
\end{cor}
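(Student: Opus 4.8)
The plan is to read off the answer from Theorem~\ref{thm:coatoms}, which already lists the coatoms explicitly: the clones $\genMClo{\{x_1\cdots x_{1+P_i}\}}$ for $1\leq i\leq l$, and the clones $\addCo{D}$ for each nonempty $D\subseteq\{1,\ldots,l\}$. Since there are $l$ choices of $i$ and $2^l-1$ nonempty subsets $D$, the claimed number $2^l-1+l$ will follow once we check that all the listed coatoms are pairwise distinct, so that no two descriptions give the same clone. Thus the whole proof reduces to verifying distinctness within each of the two families and across the two families.

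First I would note that the $l$ clones of the first kind are pairwise distinct: by Theorem~\ref{thm:topIsoDivLat} we have $\genMClo{\{x_1\cdots x_{1+P_i}\}}=\mathcal{C}(P_i)$, and $\mathcal{C}$ is a lattice isomorphism from $\mathcal{D}(q-1)$ onto $\mathcal{X}(q)$, hence injective; as the primes $P_1<\cdots<P_l$ are distinct, so are the corresponding clones. Next, the proof of Theorem~\ref{thm:coatoms} already shows that for distinct nonempty $D,D'\subseteq\{1,\ldots,l\}$ the clones $\addCo{D}$ and $\addCo{D'}$ are incomparable, in particular distinct, so the second family contributes exactly $2^l-1$ distinct clones. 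Finally, for disjointness of the two families I would again quote the proof of Theorem~\ref{thm:coatoms}: by Lemma~\ref{lem:addCo}, no $\addCo{D}$ contains a monomial two of whose exponents are relatively prime to $q-1$, whereas $\genMClo{\{x_1\cdots x_{1+P_i}\}}$ contains its generator $x_1\cdots x_{1+P_i}$, all of whose exponents equal $1$ and are therefore coprime to $q-1$. Hence no clone of the first kind coincides with a clone of the second kind, and the total count is $l+(2^l-1)=2^l-1+l$.

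I do not expect a genuine obstacle here: the statement is pure bookkeeping layered on Theorem~\ref{thm:coatoms}, and every distinctness fact required is either immediate from the isomorphism of Theorem~\ref{thm:topIsoDivLat} or was already established inside the proof of Theorem~\ref{thm:coatoms}. The only point that needs to be spelled out, rather than simply cited, is the across-family disjointness, which is exactly the observation made in the sentence preceding the corollary; making that explicit is the single substantive line of the argument.
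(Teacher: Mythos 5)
Your proof is correct and matches the paper's approach exactly: the paper disposes of the corollary in a single sentence ("All coatoms we have found in Theorem~\ref{thm:coatoms} are different and thus we obtain the number of coatoms"), and your write-up simply unpacks that distinctness claim, citing the injectivity from Theorem~\ref{thm:topIsoDivLat} for the first family, the incomparability established inside the proof of Theorem~\ref{thm:coatoms} for the second family, and the exponent-coprimality observation (also already made in that proof) to separate the two families.
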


\subsection{Summary}
We summarize this section in Figure \ref{fig:TopBot}.
Let $q>2$ be a prime power, let $l, m,n\in \N$ and let
$P_1< \ldots < P_l$ be all primes that divide $q-1$
and let $2 \leq s_1 <\ldots <s_m \leq q-1$ be all numbers such that for all $i\leq m$,
$s_i^p \equiv_{q-1} 1$ for some prime $p$ or $s_i s_i \equiv_{q-1} s_i$.
Let $D_1, \ldots, D_{2^l - 1}$ be all nonempty subsets of $\{1,\ldots,l\}$.

  \begin{figure}[ht!]
 \begin{center}
\begin{tikzpicture}[scale=0.55, transform shape]
  \node (max) at (0, 12) {$\genMClo{\{x_1x_2\}}$};
  \node (min) at (0, 0) {$\genMClo{\{x_1\}}$};

  \node (a1) at (-4, 2) {$\genMClo{\{x_1x_2^{q-1}\}}$};
  \node (a2) at (-1, 2) {$\genMClo{\{x_1^{s_1}\}}$};
  \node (a3) at (1, 2) {$\genMClo{\{x_1^{s_2}\}}$};
  \node (a4) at (4, 2) {$\genMClo{\{x_1^{s_m}\}}$};
  \node (a) at (2.5, 2) {$\cdots$};

  \node (b1) at (-4, 3.5){$\vdots$};
  \node (b2) at (-1, 3.5){$\vdots$};
  \node (b3) at (1, 3.5){$\vdots$};
  \node (b4) at (4, 3.5){$\vdots$};

  \node (maxId) at (-5, 7) {$\genMClo{\{x_1 \cdots x_q\}}$};
  \node (i1) at (-6, 10){$\genMClo{\{x_1 \cdots x_{1+P_1}\}}$};
  \node (i2) at (-2, 10){$\genMClo{\{x_1 \cdots x_{1+P_l}\}}$};
  \node (i) at (-4 , 10){$\cdots$};
  \node (j1) at (-5, 8){$\vdots$};
  \node (j2) at (-6, 9){$\vdots$};
  \node (j3) at (-2, 9){$\vdots$};
  \node (j4) at (-10, 9){$\mathcal{D}(q-1) \cong$ };

  \node (x1xq) at (5, 6) {$\genMClo{\{x_1^tx_2^{q-1}\mid t\in \N\}}$};
  \node (k1) at (7, 10){$\addCo{D_{2^l -1}}$};
  \node (l1) at (5, 7){$\vdots$};
  \node (l3) at (7, 9){$\vdots$};

  \node (e1) at (3,10){$\addCo{D_1}$};
  \node (e3) at (5,10){$\cdots$};
  \node (e2) at (3, 9){$\vdots$};

  \draw (min) -- (a1)
	(min) -- (a2)
	(min) -- (a3)
	(min) -- (a4)
	(e1) -- (max)
	(i1) -- (max)
	(i2) -- (max)
	(k1) -- (max);

	\draw[color=gray] (-3.8,9.5) circle [x radius=5.5cm, y radius=3cm, rotate=25];

\end{tikzpicture}\caption{The frame of the lattice of monomial clones on $\F_q$}\label{fig:TopBot}
 \end{center}
\end{figure}
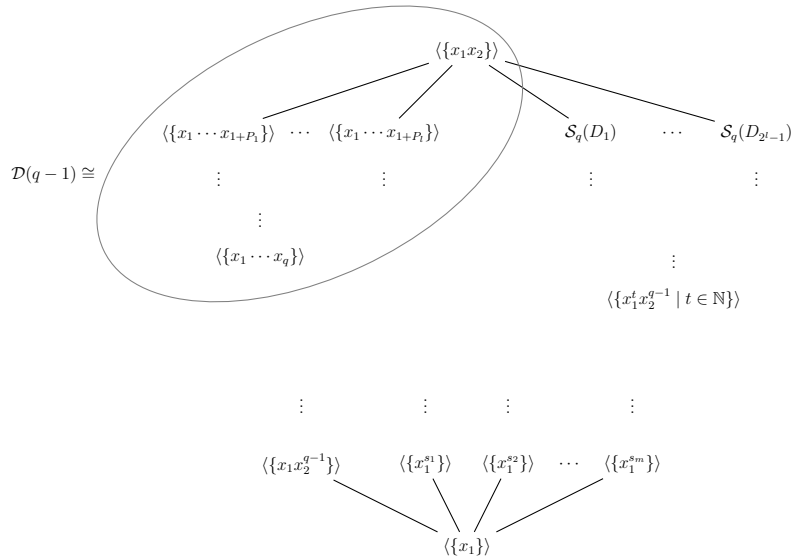

\section{Infinite chains of monomial clones}\label{sec:InfChains}

\subsection{Monomial clones are well-partially ordered}

Consider $q\in\N\setminus{\{1\}}$. Let $S\subseteq \N_0^{q-1}$ with $(0,\ldots, 0) \in S$.
We call $S$ a \emph{$q$-minor set} if for all
$(s_1,\ldots,s_{q-1}) \in S$ we have for all $t_1, \ldots, t_{q-1} \in \N_0$
with $s_i-t_i(q-1)\geq 0$ for all $i \leq q-1$
that $(s_1 -t_1(q-1), \ldots, s_{q-1}-t_{q-1}(q-1))\in S$.
By $\mathcal{T}_q$ we denote the set of all $q$-minor sets.

Let $\mathbf{b} = (b_1, \ldots, b_{q-1}) \in \{0,\ldots, q-2\}^{q-1}$ and let $S\subseteq \N_0^{q-1}$ be a $q$-minor set.
We define $M(\mathbf{b}, S)$
by
\begin{equation*}
 M(\mathbf{b}, S) := \{(t_1, \ldots, t_{q-1}) \in \N_0^{q-1} \mid (b_1, \ldots, b_{q-1}) +(q-1)\cdot(t_1, \ldots, t_{q-1}) \in S \}.
\end{equation*}

Let $\leq$ be the product order on $\N_0^{q-1}$.
We call a subset $D \subseteq \N_0^{q-1}$ \emph{downward closed}
if $d\in D$, $d' \in \N_0^{q-1}$, $d'\leq d$ implies $d' \in D$.
By the definition of a $q$-minor set,
we have that $M(\mathbf{b}, S)$ is a downward closed set for all $\mathbf{b} \in \{0,\ldots, q-2\}^{q-1}$.

\begin{lem}\label{lem:goToDownCl}
 Let $S , S' \subseteq \N_0^{q-1}$ be $q$-minor sets.
 Then $S\subseteq S'$ if and only if $M(\mathbf{b}, S) \subseteq M(\mathbf{b}, S')$ for all $\mathbf{b} \in \{0,\ldots, q-2\}^{q-1}$.
\end{lem}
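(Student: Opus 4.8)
The plan is to exploit the fact that every vector in $\N_0^{q-1}$ has a unique decomposition into a ``residue part'' in $\{0,\ldots,q-2\}^{q-1}$ and a ``quotient part'' scaled by $q-1$. Concretely, given $\mathbf{s}=(s_1,\ldots,s_{q-1})\in\N_0^{q-1}$, write $s_i = b_i + (q-1)t_i$ with $b_i := s_i \bmod (q-1)\in\{0,\ldots,q-2\}$ and $t_i := \lfloor s_i/(q-1)\rfloor\in\N_0$; then $\mathbf{s} = \mathbf{b} + (q-1)\cdot\mathbf{t}$ with $\mathbf{b}\in\{0,\ldots,q-2\}^{q-1}$ and $\mathbf{t}\in\N_0^{q-1}$, and this is the only such representation. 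In other words, $(\mathbf{b},\mathbf{t})\mapsto \mathbf{b}+(q-1)\cdot\mathbf{t}$ is a bijection $\{0,\ldots,q-2\}^{q-1}\times\N_0^{q-1}\to\N_0^{q-1}$, so the family $\{M(\mathbf{b},S)\}_{\mathbf{b}}$ records the entire membership information of $S$, merely reorganized by residue class.

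For the forward implication I would assume $S\subseteq S'$, fix $\mathbf{b}\in\{0,\ldots,q-2\}^{q-1}$, and take $\mathbf{t}\in M(\mathbf{b},S)$; by definition $\mathbf{b}+(q-1)\cdot\mathbf{t}\in S\subseteq S'$, hence $\mathbf{t}\in M(\mathbf{b},S')$, giving $M(\mathbf{b},S)\subseteq M(\mathbf{b},S')$ for every $\mathbf{b}$. For the converse, assume $M(\mathbf{b},S)\subseteq M(\mathbf{b},S')$ for all $\mathbf{b}$ and let $\mathbf{s}\in S$. Using the decomposition above, write $\mathbf{s}=\mathbf{b}+(q-1)\cdot\mathbf{t}$ with $\mathbf{b}\in\{0,\ldots,q-2\}^{q-1}$ and $\mathbf{t}\in\N_0^{q-1}$; then $\mathbf{b}+(q-1)\cdot\mathbf{t}=\mathbf{s}\in S$ yields $\mathbf{t}\in M(\mathbf{b},S)\subseteq M(\mathbf{b},S')$, so $\mathbf{s}=\mathbf{b}+(q-1)\cdot\mathbf{t}\in S'$. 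Therefore $S\subseteq S'$.

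There is essentially no obstacle here: this is a bookkeeping statement, and the only point to get right is that the residue vector always lands in $\{0,\ldots,q-2\}^{q-1}$, which is exactly what reduction modulo $q-1$ delivers. I would also remark that neither direction uses the hypothesis that $S$ and $S'$ are $q$-minor sets --- that assumption is only needed to ensure the sets $M(\mathbf{b},S)$ are downward closed, which is used later --- so it need not be invoked in this proof.
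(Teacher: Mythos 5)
Your proof is correct and follows essentially the same route as the paper's: the forward direction is immediate from the preimage definition of $M$, and the converse rests on the unique decomposition of any element of $\N_0^{q-1}$ as $\mathbf{b}+(q-1)\cdot\mathbf{t}$ with $\mathbf{b}\in\{0,\ldots,q-2\}^{q-1}$ (the paper phrases the converse contrapositively, but that is an inessential difference). Your remark that the $q$-minor hypothesis is not actually used here is also accurate.
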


\begin{proof}
 The ``only if''-direction obviously holds by the definition of $M$ as a preimage.
 For the ``if''-direction we assume that
 $S\not \subseteq S'$.
 Then there is an $s = (s_1, \ldots, s_{q-1})\in S$ such that $s\not \in S'$.
 Now let $t_1, \ldots, t_{q-1} \in \N_0$
 such that $0\leq s_i - t_i(q-1) \leq q-2$.
 We define
 $\mathbf{b} :=(s_1 - t_1(q-1), \ldots, s_{q-1} -t_{q-1}(q-1))$.
 Then we have $ (t_1, \ldots, t_{q-1})\in M(\mathbf{b}, S)$, but
 $(t_1, \ldots, t_{q-1})\not \in M(\mathbf{b}, S')$,
 because $s \not \in S'$.
This finishes the proof.
\end{proof}

Let $\mathbf{b}_1, \ldots, \mathbf{b}_{(q-1)^{(q-1)}}$ be all elements of $\{0,\ldots, q-2\}^{q-1}$
and $\mathcal{D}(\N_0^{q-1})$ be the set of all downward closed sets.
For any $q$-minor set $S\subseteq \N_0^{q-1}$,
we now define $T(S) \in (\mathcal{D}(\N_0^{q-1}))^{(q-1)^{(q-1)}}$ by
$$T(S) := (M(\mathbf{b}_1, S), \ldots, M(\mathbf{b}_{(q-1)^{(q-1)}}, S) ).$$

\begin{pro}\label{pro:monMinSetWPO}
Let $q$ be a prime power.
 $(\mathcal{T}_q,\subseteq)$ is well-partially ordered.
\end{pro}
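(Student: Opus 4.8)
The plan is to transport well‑partial‑orderedness from $(\mathcal{D}(\N_0^{q-1}),\subseteq)$ to $(\mathcal{T}_q,\subseteq)$ along the map $T$, using Lemma~\ref{lem:goToDownCl}. First I would record Dickson's lemma: $(\N_0^{q-1},\le)$ is a well‑partial‑order. In particular every upward closed subset of $\N_0^{q-1}$ has only finitely many minimal elements, so the downward closed subsets of $\N_0^{q-1}$ are exactly the complements of up‑sets of finite antichains; and, as was already observed in the excerpt, each $M(\mathbf{b},S)$ is downward closed, so $T$ does map $\mathcal{T}_q$ into $(\mathcal{D}(\N_0^{q-1}))^{(q-1)^{(q-1)}}$.

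The main step, and what I expect to be the real obstacle, is to show that $(\mathcal{D}(\N_0^{q-1}),\subseteq)$ is a well‑partial‑order. Well‑foundedness is easy: given a hypothetical strictly descending chain $D^{(1)}\supsetneq D^{(2)}\supsetneq\cdots$, choose $p_k\in D^{(k)}\setminus D^{(k+1)}$; then $p_k\notin D^{(j)}$ for all $j>k$, and Dickson's lemma applied to $(p_k)_k$ gives $i<j$ with $p_i\le p_j$; since $p_j\in D^{(j)}\subseteq D^{(i+1)}$ and $D^{(i+1)}$ is downward closed, $p_i\in D^{(i+1)}$, contradicting the choice of $p_i$. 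Excluding \emph{infinite antichains} in $\mathcal{D}(\N_0^{q-1})$ is the genuine difficulty: over an arbitrary well‑partial‑order the analogous statement fails, so something beyond plain Dickson is needed. Here I would appeal to the fact that $\N_0^{q-1}$, being a finite direct power of the well‑order $\omega$, is a better‑quasi‑order, and that the lower‑powerset construction $P\mapsto\mathcal{D}(P)$ preserves better‑quasi‑orders (Nash‑Williams); hence $(\mathcal{D}(\N_0^{q-1}),\subseteq)$ is in particular a well‑partial‑order. (An alternative, if one wants to stay self‑contained, is a direct combinatorial argument, e.g.\ by induction on $q-1$ via the slice decomposition $D\mapsto(D_0\supseteq D_1\supseteq\cdots)$ of a downward closed set into downward closed subsets of $\N_0^{q-2}$; this is the point where I would expect the proof to cost the most work.)

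Once $(\mathcal{D}(\N_0^{q-1}),\subseteq)$ is known to be a well‑partial‑order, I would invoke the standard fact that a finite direct product of well‑partial‑orders, with the coordinatewise order, is again a well‑partial‑order, applied with $(q-1)^{(q-1)}$ factors, to conclude that $(\mathcal{D}(\N_0^{q-1}))^{(q-1)^{(q-1)}}$ is a well‑partial‑order. To finish, note that by Lemma~\ref{lem:goToDownCl} we have, for $q$‑minor sets $S,S'$, that $S\subseteq S'$ if and only if $M(\mathbf{b},S)\subseteq M(\mathbf{b},S')$ for all $\mathbf{b}$, i.e.\ if and only if $T(S)\le T(S')$ in the coordinatewise order; thus $T$ is order‑reflecting. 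Given any infinite sequence $S_1,S_2,\ldots$ of $q$‑minor sets, applying the well‑partial‑order property to $T(S_1),T(S_2),\ldots$ yields indices $i<j$ with $T(S_i)\le T(S_j)$, hence $S_i\subseteq S_j$. Since admitting such a pair of indices for every infinite sequence is precisely the property of having no infinite antichain and no infinite strictly descending chain, it follows that $(\mathcal{T}_q,\subseteq)$ is well‑partially ordered.
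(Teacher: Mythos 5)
Your plan matches the paper's: show that $(\mathcal{D}(\N_0^{q-1}),\subseteq)$ is a well-partial-order, take the finite power, and pull back along the order embedding $T$ given by Lemma~\ref{lem:goToDownCl}. You correctly isolate the one non-routine step — that downward closed subsets of $\N_0^{q-1}$ under inclusion contain no infinite antichain — and you correctly flag that plain Dickson/Higman does not suffice here, since the analogous claim fails for general wpo's (Rado's example). The only divergence from the paper is in how that step is justified: the paper cites a survey result (Theorem~1.1 of Aichinger--Aichinger) together with the product theorem from Aschenbrenner--Hemmecke, staying entirely inside wpo/noetherian-order language, whereas you appeal to better-quasi-order theory ($\omega$ is bqo, finite products preserve bqo, and Nash-Williams's theorem that the downward powerset of a bqo is again bqo), with the bqo-to-wpo implication doing the final reduction. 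Both routes are valid; the bqo route imports heavier machinery but yields a stronger conclusion, while the paper's citation is more elementary and more closely scoped to what is actually needed. Your suggested self-contained alternative (induction on the number of coordinates via the slice decomposition of a down-set of $\N_0^{q-1}$ into a nested $\omega$-chain of down-sets of $\N_0^{q-2}$, handled by Higman) also works and is probably closest in spirit to what the cited survey theorem does internally. Your direct argument excluding infinite descending chains is fine but, as you implicitly note, is subsumed once the wpo (or bqo) property is established.
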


\begin{proof}
Since the downward closed sets are the complements of the upward closed sets,
we get from \cite[Theorem 1.1]{AA:BRO}
that $(\mathcal{D}(\N_0^{q-1}),\subseteq)$ does not contain infinite antichains or infinite descending chains.
Now \cite[Example (4) p.195]{AH:FTHSIP}
yields that $((\mathcal{D}(\N_0^{q-1}))^{(q-1)^{(q-1)}}, \subseteq')$,
where $\subseteq'$ is the product order of $\subseteq$,
does not contain infinite antichains or infinite descending chains.
By Lemma~\ref{lem:goToDownCl} we have that $T$ is an order embedding into
 $((\mathcal{D}(\N_0^{q-1}))^{(q-1)^{(q-1)}}, \subseteq')$, and thus the result holds.
\end{proof}

Let now $q$ be a prime power.
Remember that $\monClones{q}$ is the set of all monomial clones on $\F_q$,
and if $C \in \monClones{q}$, then
$\overline{C}$ is the set $\{\prod_{i=1}^n x_i^{\alpha(i)}\in C \mid \forall i \leq n \colon \alpha(i) \in \{1,\ldots, q-1\}\}$.
Let $m\in \overline{C}$.
Now we define for $i\in \{1,\ldots, q-1\} $,
$\mathcal{N}(m, i)$ as the number of exponents of $m$ equal to $i$.
Let $C$ be a monomial clone.
We define
\begin{align*}
\phi(C) := \{(\mathcal{N}(m, 1), \ldots, \mathcal{N}(m, q-1))\in\N_0^{q-1}\mid m \in \overline{C}\}\cup \{(0,\ldots, 0)\}.
\end{align*}
Since $C$ is closed under identifying variables,
$\overline{a+(q-1)\cdot a} = \overline{a}$ for all $a\in \N$ 
and $(0,\ldots,0)\in \phi(C)$,
we have $\phi(C) \in \mathcal{T}_q$.

\begin{thm}\label{thm:wellPO}
Let $q$ be a prime power.
Then $\phi$ is an order embedding from $(\monClones{q},\subseteq)$ to $(\mathcal{T}_q,\subseteq)$,
and thus the lattice of monomial clones on $\F_q$ is well-partially ordered, i.e.,
there is no infinite descending chain of monomial clones on $\F_q$ and
there is no infinite antichain of monomial clones on $\F_q$.
\end{thm}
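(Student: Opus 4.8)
The plan is to establish the biconditional $C \subseteq D \iff \phi(C) \subseteq \phi(D)$, which is exactly what "$\phi$ is an order embedding" means, and then to read off the well-partial-order conclusion by transporting it from $(\mathcal{T}_q,\subseteq)$, which is well-partially ordered by Proposition~\ref{pro:monMinSetWPO}.

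First I would dispose of the easy "only if" direction: if $C \subseteq D$, then $\overline{C} \subseteq \overline{D}$, so every vector $(\mathcal{N}(m,1),\ldots,\mathcal{N}(m,q-1))$ realised by some $m \in \overline{C}$ is also realised by some $m \in \overline{D}$; since $(0,\ldots,0)$ lies in both sets by definition, $\phi(C) \subseteq \phi(D)$.

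The substantial step is the "if" direction, and I would prove it by showing $\overline{C} \subseteq \overline{D}$, which suffices since a monomial clone is uniquely determined by its set of canonical monomials. Given $m = \prod_{i=1}^n x_i^{\alpha(i)} \in \overline{C}$ with each $\alpha(i)\in\{1,\ldots,q-1\}$, its count vector $v = (\mathcal{N}(m,1),\ldots,\mathcal{N}(m,q-1))$ lies in $\phi(C) \subseteq \phi(D)$; since $m$ has at least one positive exponent, $v \neq (0,\ldots,0)$, so $v$ must be the count vector of some $m' \in \overline{D}$. The key point is that $v$ determines $m$ up to a permutation of variables: from $v$ one recovers the total number of variables $\sum_{j=1}^{q-1}\mathcal{N}(m,j)$ and the entire multiset of exponents, so $m'$ and $m$ share the same multiset of exponents and hence differ only by a permutation of variables. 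A monomial clone contains the projections and is closed under substitution, hence is closed under permuting variables, so $m \in D$; as its exponents lie in $\{1,\ldots,q-1\}$, this gives $m \in \overline{D}$. Thus $\overline{C} \subseteq \overline{D}$ and therefore $C \subseteq D$.

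Finally, an order embedding is injective and reflects both incomparability and strict inclusion, so under $\phi$ the image of any infinite antichain (respectively, infinite strictly descending chain) of monomial clones would be an infinite antichain (respectively, infinite strictly descending chain) in $(\mathcal{T}_q,\subseteq)$, contradicting Proposition~\ref{pro:monMinSetWPO}; this yields the stated well-partial-order conclusion. I do not expect a genuine obstacle beyond care in the "if" direction with the edge case $v=(0,\ldots,0)$ and with the appeal to permutation-closedness of monomial clones — all the real combinatorial content, namely the absence of infinite antichains and infinite descending chains among downward closed subsets of $\N_0^{q-1}$ and their finite powers, has already been handled in Proposition~\ref{pro:monMinSetWPO} via \cite{AA:BRO} and \cite{AH:FTHSIP}.
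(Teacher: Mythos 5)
Your proposal is correct and follows essentially the same argument as the paper: both directions of the biconditional $C \subseteq D \iff \phi(C)\subseteq\phi(D)$ are handled identically, with the key step being that the count vector determines a monomial in $\overline{C}$ up to a permutation of variables, and monomial clones are closed under such permutations. The only addition you make is a slightly more explicit treatment of the $(0,\ldots,0)$ edge case, which the paper leaves implicit.
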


\begin{proof}
We show that $C \subseteq C'$ if and only if $\phi(C) \subseteq \phi(C')$.
The ``only if''-direction holds by the definition of $\phi$.
For the ``if''-direction let $\phi(C) \subseteq \phi(C')$.
Let w.l.o.g.\ $m = \prod_{i=1}^n x_i^{\alpha(i)} \in C$,
where $1\leq \alpha(i) \leq q-1$ for each $i\leq n$.
Then $e= (s_1, \ldots, s_{q-1})\in \phi(C)$, where $s_i = \mathcal{N}(m, i)$ for each $i\leq q-1$.
Since $\phi(C) \subseteq \phi(C')$, we have $e \in \phi(C')$.
Therefore there is an $m' \in \overline{C'} \subseteq C'$
such that $m' = \prod_{i=1}^n x_{\pi(i)}^{\alpha(i)}$ for some bijective $\pi \colon \{1,\ldots, n\} \to \{1, \ldots, n\}$.
Since $C'$ is closed under permuting variables, we have $m\in C'$.
This means $\phi$ is an order embedding from $\monClones{q}$ to $\mathcal{T}_q$.
By Proposition~\ref{pro:monMinSetWPO} we have that $(\mathcal{T}_q,\subseteq)$ is well-partially ordered, and thus the result holds.
\end{proof}

\emph{Remark}:
For the embedding from monomial clones to $q$-minor sets,
we have just used that a monomial clone is closed under composition with projections and closed under taking equivalent monomials.
Proposition 2.13 of 
\cite{S:CUA} states that
for every finite set $A$, up to term equivalence,
there are only countably many semi-affine algebras on $A$.
Theorem~\ref{thm:wellPO} does not follow directly
from this result of \'{A}. Szendrei or its proof.
On the other hand we can easily transfer the result of Theorem~\ref{thm:wellPO}
to the lattice of $0$-preserving semi-affine clones with respect to a (finite) abelian group $\mathbf{A} := (A,+,-,0)$,
by
\begin{itemize}
 \item replacing in the definition of a $q$-minor set, $(q-1)$ by the exponent of $\mathbf{A}$,
 \item and counting the number of occurrences of
 a number $r$ below the exponent among the coefficients $r_i$ of
 $f\colon A^n \to A, (a_1,\ldots, a_n) \mapsto \sum_{i=1}^n  r_i \cdot a_i$ (cf.\ the proof of \cite[Proposition 2.13]{S:CUA}).
\end{itemize}

\subsection{Infinite ascending chains}

\begin{pro}\label{pro:ascChainSquare}
  Let $q$ be a prime power such that there is a $k\in \N\setminus\{1\}$ with $k^2 \mid q-1$.
  Now let $d := \frac{q-1}{k}$.
 Let for each $i \in \N_0$,
 $f_i := \prod_{j=1}^{k\cdot i + 1} x_j^d$.
 Then $q\geq 5$ and $(\genMClo{\{f_i\}})_{i\in\N_0}$ is an infinite ascending chain of monomial clones on $\F_q$.
\end{pro}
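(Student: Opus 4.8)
The plan is to check the three ingredients hidden in the phrase ``infinite ascending chain'': each $\genMClo{\{f_i\}}$ is a monomial clone (immediate, since the least monomial clone containing any set of monomials exists, and $q\geq 5$ because $q-1\geq k^2\geq 4$), each inclusion $\genMClo{\{f_i\}}\subseteq\genMClo{\{f_{i+1}\}}$ holds, and each inclusion is proper. Note first that $k\mid q-1$, so $d=\frac{q-1}{k}\in\N$, and $k\geq 2$ forces $2\leq d\leq q-2$.

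For $\genMClo{\{f_i\}}\subseteq\genMClo{\{f_{i+1}\}}$ I would just exhibit $f_i$ inside $\genMClo{\{f_{i+1}\}}$. In $f_{i+1}=\prod_{j=1}^{k(i+1)+1}x_j^d$ identify the last $k+1$ variables $x_{ki+1},\dots,x_{k(i+1)+1}$ with $x_{ki+1}$; this produces $\bigl(\prod_{j=1}^{ki}x_j^d\bigr)\,x_{ki+1}^{(k+1)d}$. Since $(k+1)d=(q-1)+d$ is positive and congruent to $d$ modulo $q-1$, closure under equivalent monomials gives $\prod_{j=1}^{ki+1}x_j^d=f_i\in\genMClo{\{f_{i+1}\}}$.

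The substantive part is properness, and I would handle it by exhibiting a monomial clone $G$ with $\genMClo{\{f_i\}}\subseteq G$ but $f_{i+1}\notin G$. The arithmetic input is that $q-1\mid d^2$: indeed $d^2=(q-1)\cdot\frac{q-1}{k^2}$ and $\frac{q-1}{k^2}\in\N$ because $k^2\mid q-1$; hence $d\mid a$ implies $q-1\mid da$ for every integer $a$. Let $G$ be the set of all monomials that are equivalent to a variable, together with all monomials whose exponents are all divisible by $d$ and which have at most $ki+1$ exponents not congruent to $0$ modulo $q-1$. Then $G$ is closed under equivalent monomials (the conditions depend only on the residues of the exponents modulo $q-1$, using $d\mid q-1$), contains every variable, and I would check it is closed under substitution of monomials: substituting monomials of $G$ into a variable returns one of them, so assume $m=\prod_j x_j^{\alpha(j)}$ has all exponents divisible by $d$ and $m_1,\dots,m_n\in G$. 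Writing $\gamma_l(j')$ for the exponent of $x_{j'}$ in $m_l$, the exponent of $x_{j'}$ in $m(m_1,\dots,m_n)$ is $\sum_l\alpha(l)\gamma_l(j')$, which is divisible by $d$, so the composite is not equivalent to a variable; moreover if $m_l$ is not equivalent to a variable then $d\mid\gamma_l(j')$, hence $d\mid\alpha(l)\gamma_l(j')$, hence $q-1\mid\alpha(l)\gamma_l(j')$. Therefore the exponent of $x_{j'}$ is congruent modulo $q-1$ to $\sum_{l:\,m_l\sim x_{j'}}\alpha(l)$, and this is $\not\equiv 0$ only if some $m_l$ is (equivalent to) the variable $x_{j'}$ with $\alpha(l)\not\equiv 0\pmod{q-1}$; consequently the number of exponents of the composite not congruent to $0$ is at most $|\{l:\alpha(l)\not\equiv 0\pmod{q-1}\}|\leq ki+1$. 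Thus $m(m_1,\dots,m_n)\in G$, so $G$ is a monomial clone on $\F_q$. Since $f_i\in G$ we get $\genMClo{\{f_i\}}\subseteq G$; but $f_{i+1}$ has $k(i+1)+1>ki+1$ exponents, all equal to $d$ and hence all not congruent to $0$ modulo $q-1$, so $f_{i+1}\notin G$ and therefore $f_{i+1}\notin\genMClo{\{f_i\}}$. Combined with the inclusion above, $\genMClo{\{f_i\}}\subsetneq\genMClo{\{f_{i+1}\}}$ for every $i\in\N_0$, so $(\genMClo{\{f_i\}})_{i\in\N_0}$ is an infinite strictly ascending chain of monomial clones on $\F_q$.

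I expect the only genuine obstacle to be the closure of $G$ under substitution of monomials, where one must notice that $d^2\equiv 0\pmod{q-1}$ — exactly the point at which the hypothesis $k^2\mid q-1$ is used — so that every non-projection input contributes only multiples of $q-1$ to the exponents of a composite, and hence the number of exponents that ``survive'' (are not $\equiv 0\pmod{q-1}$) is capped by $ki+1$. Everything else is routine bookkeeping.
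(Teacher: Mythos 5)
Your proof is correct, and the properness step takes a genuinely different route from the paper. The paper proves $f_{i+1}\notin\genMClo{\{f_i\}}$ by describing $\genMClo{\{f_i\}}$ from below: it inducts on the length of terms in the language $\{f_i\}$ and shows that every non-projection term function is induced by a monomial in $\{0,d,2d,\ldots,q-1\}$-exponents with at most $ki+1$ exponents \emph{exactly equal to} $d$ (only the projection arguments of $f_i$ can contribute a reduced exponent of $d$; all other arguments feed in multiples of $d$, and $d\cdot d\equiv 0\pmod{q-1}$). You instead work from above: you define a candidate barrier $G$ (monomials equivalent to a variable, plus those with all exponents divisible by $d$ and at most $ki+1$ exponents $\not\equiv 0\pmod{q-1}$), verify directly that $G$ satisfies the three monomial-clone axioms, and conclude $\genMClo{\{f_i\}}\subseteq G\not\ni f_{i+1}$ by minimality of the generated clone. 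The crucial arithmetic fact is the same in both — $q-1\mid d^2$, which is exactly where $k^2\mid q-1$ is used — but your approach replaces the induction over term trees by a one-shot closure check, which I find slightly cleaner; the paper's version, in exchange, yields a sharper structural invariant (controlling the count of exponents equal to $d$, not merely those $\not\equiv 0$). One phrasing nit: the chain ``$d\mid\gamma_l(j')$, hence $d\mid\alpha(l)\gamma_l(j')$, hence $q-1\mid\alpha(l)\gamma_l(j')$'' reads as if $d\mid X$ implied $q-1\mid X$; what you mean (and what your earlier remark ``$d\mid a$ implies $q-1\mid da$'' supports) is $d\mid\alpha(l)$ \emph{and} $d\mid\gamma_l(j')$, so $d^2\mid\alpha(l)\gamma_l(j')$ and hence $q-1\mid\alpha(l)\gamma_l(j')$; worth spelling out. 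The containment direction $\genMClo{\{f_i\}}\subseteq\genMClo{\{f_{i+1}\}}$ is handled the same way in both (the paper invokes its Lemma~\ref{lem:cutQ1}, you do the variable identification by hand).
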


\begin{proof}
Let $i\in\N_0$.
 We have $k\cdot d = q-1$, and thus by Lemma~\ref{lem:cutQ1} we have
 $\genMClo{\{f_i\}} \subseteq \genMClo{\{f_{i+1}\}}$.
 Now we show that this inclusion is proper.
 We have $d\neq q-1$, since $k\neq 1$, and $\overline{d^2} = \overline{(q-1) \cdot (\frac{q-1}{k^2})}=q-1$, since $k^2 \mid q-1$.
 We interpret now $f_i$ as the $(k\cdot i + 1)$-ary function which is induced by $f_i$.
  The clone generated by $f_i$ can be described as the set of all
  term functions $T^{\F_q}$
  induced by some term $T$
in the language $\{f_i\}$ over $n'$ variables where $n' \in \N$.
  We now show by induction on the length of the term $T$ that
  if $T^{\F_q}$ is not a projection, then $T^{\F_q}$ is 
  an $n$-ary operation on $\F_q$ for some $n\in\N$
  which is
  induced by a 
  monomial $x_1^{\alpha(1)} \cdots x_{n}^{\alpha(n)}$
  with $\alpha(1), \ldots, \alpha(n) \in \{0,1,\ldots,q-1\}$
  such that $\lvert \{j\leq n\mid \alpha(j) = d\} \rvert \leq k\cdot i + 1$
  and for each $j\leq n$ we have that $d$ divides $\alpha(j)$.
  If the length of the term $T$ is equal to $1$,
  then $T$ is a variable and thus induces a projection, whence the statement holds.
  Now let $T$ be of length greater than $1$.
 Then  $T = f_i(T_1, \ldots, T_{k\cdot i + 1})$
  where for $j\leq k\cdot i + 1$, $T_j$ is a shorter term than $T$.
  Let $n\in \N$ be the arity of $T^{\F_q}$.
  Now let $m = x_1^{\alpha(1)} \cdots x_{n}^{\alpha(n)}$ be the monomial 
   with $\alpha(1), \ldots, \alpha(n) \in \{0,1,\ldots,q-1\}$
  that induces $T^{\F_q}$.
  We see the following.
  Let $j\leq k\cdot i +1$.
  If $(T_j)^{\F_q}$ is a projection, then $(T_j^d)^{\F_q}(a_1, \ldots, a_n) = a_{i'}^d$ for some $i' \leq n$.
  If $(T_j)^{\F_q}$ is not a projection,
  then $(T_j^d)^{\F_q}(a_1,\ldots, a_n) = \prod_{i'\in I} a_{i'}^{q-1}$ for some $I\subseteq \{1,\ldots,n\}$,
  since by the induction hypothesis all exponents of the monomial which induces $T_j^{\F_q}$
  are of the form $t\cdot d$, and thus we have
  $t\cdot d \cdot d = t\cdot d^2$ where $\overline{t\cdot d^2} = q-1$ if $t\neq 0$.
  Since $d\mid q-1$, we now get that 
  the exponents $\alpha(1), \ldots, \alpha(n)$ of $m$
  are divisible by $d$. 
  Since $d\neq q-1$, we get that
  the number of $d$'s as exponent is at most the number of $T_j$'s which are projections.
  This means that at most $k\cdot i + 1$ many exponents of $m$ are equal to $d$.

 Now we have $\genMClo{\{f_i\}} \subset \genMClo{\{f_{i+1}\}}$,
  since $f_{i+1}$ contains $k\cdot (i+1) + 1$ many variables with $d$ as their exponent.
  Hence, $\genMClo{\{f_0\}} \subset \genMClo{\{f_1\}} \subset \genMClo{\{f_2\}} \subset \genMClo{\{f_3\}} \subset \ldots$ which finishes the proof.
\end{proof}

\begin{exa}
 In $\F_5$,
$\genMClo{\{x_1^2\}} \subset \genMClo{\{x_1^2x_2^2x_3^2\}}  \subset \genMClo{\{x_1^2x_2^2x_3^2x_4^2x_5^2\}} \subset \ldots$
 or in $\F_9$,
 $\genMClo{\{x_1^4\}} \subset \genMClo{\{x_1^4x_2^4x_3^4\}}  \subset \genMClo{\{x_1^4x_2^4x_3^4x_4^4x_5^4\}} \subset \ldots$
 are infinite ascending chains of monomial clones.
\end{exa}

\emph{Remark}:
Let $q\geq 5$ be a prime power such that there is a $k\in \N\setminus\{1\}$ with $k^2 \mid q-1$.
We find infinite ascending chains
also by the connection to semi-affine algebras:
By \cite[p. 60]{S:CUA} the clones on $\Z_{q-1}$, generated by $\{(x_1, \ldots, x_i) \mapsto \sum_{j=1}^{i} k \cdot x_j\}$ for $i \in \N$,
form an infinite ascending chain.

\begin{thm}\label{thm:squFreeAscChain}
 Let $q$ be a prime power.
 Infinite ascending chains of monomial clones on $\F_q$ exist if and only if 
 $q-1$ is not square-free.
\end{thm}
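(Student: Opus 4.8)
The plan is to treat the two directions separately. For the ``if'' direction, suppose $q-1$ is not square-free; then there is $k\in\N\setminus\{1\}$ with $k^2\mid q-1$, and Proposition~\ref{pro:ascChainSquare} directly exhibits the infinite ascending chain $(\genMClo{\{f_i\}})_{i\in\N_0}$ of monomial clones on $\F_q$, so nothing further is needed here.

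The substance is the ``only if'' direction: assuming $q-1$ is square-free, we must show that $(\monClones{q},\subseteq)$ contains no infinite strictly ascending chain. I will in fact aim for the stronger assertion that $\monClones{q}$ is \emph{finite}; since by Theorem~\ref{thm:wellPO} this lattice is already well-partially ordered (no infinite antichain, no infinite descending chain), finiteness is equivalent to the absence of an infinite ascending chain, so this costs nothing. The reduction goes through the map $\varphi$ of Section~\ref{sec:conToSemi}. By Proposition~\ref{pro:conSemiAff}, $\varphi$ maps $\monClones{q}$ onto the set of $0$-preserving semi-affine clones with respect to $(\Z_{q-1},+,-,0)$, and by part~\eqref{en:conSem3} of that proposition each fiber $\varphi^{-1}(C)$ is finite --- this is exactly where square-freeness enters, via Lemmas~\ref{lem:consemcon1} and~\ref{lem:consemcon2}. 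Hence \[\lvert\monClones{q}\rvert=\sum_{C}\lvert\varphi^{-1}(C)\rvert,\] the sum ranging over all $0$-preserving semi-affine clones $C$ on $\Z_{q-1}$, and this is finite as soon as there are only finitely many such $C$. (Equivalently, by Lemma~\ref{lem:consemcon2} the map $G\mapsto(\varphi(G),\,G\cap\F_q[x_1,\ldots,x_q])$ is injective, and its second component ranges over a fixed finite set of sets of non-equivalent monomials in $q$ variables, so again it suffices to bound the number of semi-affine clones.)

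It therefore remains to establish the crux: \emph{for square-free $m$ there are only finitely many $0$-preserving semi-affine clones with respect to $(\Z_m,+,-,0)$}. Such a clone consists of the group homomorphisms $\Z_m^n\to\Z_m$, i.e.\ the operations $(a_1,\ldots,a_n)\mapsto\sum_{i=1}^n r_i a_i$ with $r_i\in\Z_m$; writing $m=p_1\cdots p_l$ with the $p_i$ distinct primes and $\Z_m\cong\prod_{i=1}^l\F_{p_i}$ by the Chinese Remainder Theorem, composition acts coordinatewise. I would prove the claim by (a) classifying the analogous ``linear'' clones over a single prime field $\F_p$ --- these form a finite list, organised by the submonoid of $(\F_p,\cdot)$ of admissible unary scalars together with bounded congruence data on coefficient sums (compare the cases $q\in\{3,4\}$ in Figures~\ref{fig:latSAA} and~\ref{fig:latSAA3}) --- and (b) bounding the number of subclones of the resulting product clone on $\Z_m$ that do not decompose as products of subclones of the factors. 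Step~(b) is the real obstacle: a subclone of a product need not decompose (already the clone of projections on $\Z_m$ has strictly smaller $n$-ary part than the product of the factor projection clones), so one must show that for square-free $m$ the possible ``subdirect'' mixings are still finite in number; alternatively, this finiteness can be read off from Szendrei's classification of semi-affine algebras in~\cite{S:CUA}. Granting the claim, the displayed identity forces $\lvert\monClones{q}\rvert<\infty$ whenever $q-1$ is square-free, hence there is no infinite ascending chain, which together with the ``if'' direction proves the theorem; this is moreover consistent with the explicit finite lattices found for $q\in\{2,3,4\}$ in Section~\ref{sec:wholeLatSmq}.
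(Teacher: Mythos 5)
Your argument mirrors the paper's own: Proposition~\ref{pro:ascChainSquare} for the ``if'' direction, and for the ``only if'' direction the reduction via $\varphi$ together with Proposition~\ref{pro:conSemiAff}\,\eqref{en:conSem3} (fiber finiteness, this being where square-freeness enters) to the finiteness of the number of $0$-preserving semi-affine clones on $\Z_{q-1}$, which the paper settles by citing Szendrei's theorem that a finite set $A$ carries only finitely many semi-affine algebras up to term equivalence precisely when $|A|$ is square-free. Your sketched self-contained proof of that last finiteness stops short at your step~(b), as you yourself flag, but since your ``alternatively'' clause invokes the same Szendrei citation the paper uses, the proposal is correct and takes essentially the same route.
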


\begin{proof}
If $q-1$ is not square-free, then we get infinite ascending chains by Proposition \ref{pro:ascChainSquare}.
Now we assume that $q-1$ is square-free.
By \'{A}. Szendrei (see \cite{S:CLOFS} or \cite[p.\ 61]{S:CUA}) we know that
the number of semi-affine algebras on a finite set $A$ up to term equivalence is finite
if and only if $\lvert A \rvert$ is square-free.
Since $q-1$ is square-free we have that the number of semi-affine clones with respect to $(\Z_{q-1},+,-,0)$ is finite.
Item \eqref{en:conSem3} of Proposition~\ref{pro:conSemiAff} now yields that the number of monomial clones on $\F_q$ is finite, and thus
there do not exist infinite ascending chains.
\end{proof}

\begin{cor}\label{cor:squarefreefinlat}
 For a prime power $q$, the lattice of monomial clones on $\F_q$ is finite if and only if $q-1$ is square-free.
\end{cor}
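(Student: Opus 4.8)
The plan is to read this off directly from Theorems~\ref{thm:wellPO} and~\ref{thm:squFreeAscChain}, using only the elementary fact that a poset is finite precisely when it has no infinite ascending chain, no infinite descending chain, and no infinite antichain.

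For the ``only if''-direction I argue contrapositively: if $q-1$ is not square-free, then Proposition~\ref{pro:ascChainSquare} exhibits an infinite strictly ascending chain $\genMClo{\{f_0\}}\subset \genMClo{\{f_1\}}\subset\genMClo{\{f_2\}}\subset\cdots$ of monomial clones on $\F_q$, so $\monClones{q}$ is infinite.

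For the ``if''-direction assume $q-1$ is square-free. By Theorem~\ref{thm:squFreeAscChain} the lattice $(\monClones{q},\subseteq)$ has no infinite ascending chain, and by Theorem~\ref{thm:wellPO} it has neither an infinite descending chain nor an infinite antichain. I then invoke the standard fact that a poset with no infinite descending chain and no infinite antichain is a well-quasi-order, and that in a well-quasi-order every infinite sequence of pairwise distinct elements contains an infinite strictly increasing subsequence (one extracts a weakly increasing subsequence from such a ``good'' sequence via the infinite Ramsey theorem, and distinctness upgrades it to a strictly increasing one). Were $\monClones{q}$ infinite, such a subsequence would form an infinite ascending chain, contradicting Theorem~\ref{thm:squFreeAscChain}; hence $\monClones{q}$ is finite.

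Alternatively, the poset-theoretic step can be bypassed: the square-free case of Theorem~\ref{thm:squFreeAscChain} was obtained by combining Szendrei's theorem — for $q-1$ square-free there are only finitely many $0$-preserving semi-affine clones with respect to $(\Z_{q-1},+,-,0)$ — with Item~\eqref{en:conSem3} of Proposition~\ref{pro:conSemiAff}, which says that each fibre $\{G\in\monClones{q}\mid \varphi(G)=C\}$ of $\varphi$ is finite. Since a finite union of finite fibres over a finite base is finite, $\lvert\monClones{q}\rvert<\infty$ drops out immediately, and the corollary merely repackages this. The only point requiring care is the claim above that the absence of infinite chains of either kind together with the absence of infinite antichains forces finiteness; the well-quasi-order extraction lemma disposes of it, so there is no genuine obstacle here.
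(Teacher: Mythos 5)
Your main argument coincides with the paper's: both directions combine Theorems~\ref{thm:squFreeAscChain} and~\ref{thm:wellPO} with the infinite Ramsey theorem to reduce infinitude of $\monClones{q}$ to the existence of an infinite ascending chain, then apply Theorem~\ref{thm:squFreeAscChain} in both directions. Your alternative remark is also sound and in fact slightly sharper: the proof of Theorem~\ref{thm:squFreeAscChain} in the square-free case actually establishes finiteness of $\monClones{q}$ outright (via Szendrei's theorem together with Proposition~\ref{pro:conSemiAff}~\eqref{en:conSem3}) and only then deduces the absence of ascending chains, so the ``if'' direction of the corollary can be read off directly without any further poset-theoretic extraction.
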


\begin{proof}
If the lattice of monomial clones on $\F_q$ is finite, then there are no infinite ascending chains.
From Theorem~\ref{thm:squFreeAscChain} we then get that $q-1$ is square-free.
If the lattice of monomial clones on $\F_q$ is infinite, we get by the infinite version of Ramsey's theorem that
there are infinite ascending chains, or infinite descending chains or infinite antichains.
It follows from Theorem~\ref{thm:wellPO} that the lattice of monomial clones on $\F_q$ does not contain 
 infinite descending chains and does not contain infinite antichains.
Therefore the lattice of monomial clones on $\F_q$ contains an infinite ascending chain.
Now Theorem~\ref{thm:squFreeAscChain} yields that $q-1$ is not square-free.
\end{proof}

\section{Idempotent monomial clones}\label{sec:idMonCl}

Let $q$ be a prime power.
In this section we investigate idempotent monomial clones on $\F_q$, this means the interval $[\genMClo{\{x_1\}}, \genMClo{\{x_1\cdots x_q\}}]$ (cf.\ Lemma~\ref{lem:maxIdClone}).

\begin{lem}\label{lem:getOneGen}
Let $q$ be a prime power.
Let $k,l\in \N$ and let $m_1$ be a $k$-ary idempotent monomial and let $m_2$ be an $l$-ary idempotent monomial.
Now let
 $$m(x_1, \ldots, x_{k\cdot l}):= m_1(m_2(x_{1}, \ldots, x_{l}), \ldots, m_2(x_{(k-1)\cdot l +1}, \ldots, x_{l\cdot k})).$$
 Then
 $\genMClo{\{m_1, m_2\}} = \genMClo{\{m\}}$.
\end{lem}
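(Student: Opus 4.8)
The plan is to establish the two inclusions separately. The inclusion $\genMClo{\{m\}} \subseteq \genMClo{\{m_1, m_2\}}$ is immediate: by its very definition $m$ is obtained from $m_1$ by substituting into each of its $k$ variables a copy of $m_2$ on a disjoint block of $l$ fresh variables, and a monomial clone is closed under substitution of monomials; hence $m \in \genMClo{\{m_1, m_2\}}$ and therefore $\genMClo{\{m\}} \subseteq \genMClo{\{m_1, m_2\}}$.

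For the reverse inclusion it suffices to produce $m_1$ and $m_2$ inside $C := \genMClo{\{m\}}$. Write $m_1 = \prod_{i=1}^{k} x_i^{a(i)}$ and $m_2 = \prod_{i=1}^{l} x_i^{b(i)}$; since $m_1, m_2$ are idempotent we have $\sum_{i=1}^{k} a(i) \equiv_{q-1} 1$ and $\sum_{i=1}^{l} b(i) \equiv_{q-1} 1$, and a direct computation gives $m = \prod_{j=1}^{k}\prod_{s=1}^{l} x_{(j-1)l+s}^{a(j)b(s)}$, so the variable in block $j$ at position $s$ carries exponent $a(j)b(s)$. To recover $m_1$, I identify for each $j \leq k$ all $l$ variables of the $j$-th block with one fresh variable $z_j$; since $C$ is closed under identifying variables, the monomial $\prod_{j=1}^{k} z_j^{a(j)\sum_{s=1}^{l} b(s)}$ lies in $C$, and since $\sum_{s} b(s) \geq 1$ and $\sum_{s} b(s) \equiv_{q-1} 1$ we get $\overline{a(j)\sum_{s}b(s)} = \overline{a(j)}$ for every $j$ (treating $a(j)=0$ and $a(j)>0$ separately), so this monomial is equivalent to $m_1$; as $C$ is closed under equivalent monomials, $m_1 \in C$.

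Dually, to recover $m_2$ I identify for each $s \leq l$ the $k$ variables sitting at position $s$ across all blocks (i.e.\ $x_s, x_{l+s}, \ldots, x_{(k-1)l+s}$) with one fresh variable $y_s$; this yields $\prod_{s=1}^{l} y_s^{b(s)\sum_{j=1}^{k} a(j)} \in C$, which by $\sum_{j} a(j) \equiv_{q-1} 1$ is equivalent to $m_2$, so $m_2 \in C$. This completes both inclusions. The argument contains no real obstacle; the only points needing care are the index bookkeeping for the $k$ blocks of $l$ variables, and the verification that $\overline{a(j)\sum_{s}b(s)} = \overline{a(j)}$ and $\overline{b(s)\sum_{j}a(j)} = \overline{b(s)}$ hold regardless of whether the relevant exponent vanishes, which is immediate from the definition of $\overline{\,\cdot\,}$ once one notes that both inner sums are at least $1$.
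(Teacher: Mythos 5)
Your proof is correct and follows essentially the same route as the paper's: establish $\genMClo{\{m\}} \subseteq \genMClo{\{m_1,m_2\}}$ by noting $m$ is a composition, then recover $m_1$ by identifying each block of $l$ variables to a single variable and $m_2$ by identifying across blocks position-by-position, using idempotency to cancel the extraneous factors $\sum_s b(s)$ and $\sum_j a(j)$ in the exponents. The only difference is that you spell out the edge case $a(j)=0$ (resp.\ $b(s)=0$) when verifying $\overline{a(j)\sum_s b(s)}=\overline{a(j)}$, which the paper leaves implicit; this is a harmless and arguably welcome bit of extra care, not a different argument.
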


\begin{proof}
 Let $m_1 = \prod_{j=1}^{k} x_j^{\alpha(j)}$ where $\overline{\sum_{j=1}^{k} \alpha(j)}= 1$,
 and let $m_2 = \prod_{j=1}^{l} x_j^{\beta(j)}$ where $\overline{\sum_{j=1}^{l} \beta(j)} = 1$.
  Obviously, ``$\supseteq$'' holds, since $m$ is a composition of $m_1$ and $m_2$.
 For ``$\subseteq$'' we observe the following.
 Using idempotency, we have
  \begin{align*}
  m(x_1, \ldots, x_1, x_2, \ldots, x_2, \ldots, x_{k}, \ldots, x_{k})
 = \prod_{j=1}^{k} x_{j}^{\alpha(j) \cdot (\sum_{i=1}^{l} \beta(i))} \in \genMClo{\{m\}},
 \end{align*}
 and thus $m_1=\prod_{j=1}^{k} x_{j}^{\alpha(j)} \in \genMClo{\{m\}}$,
 since $\overline{\sum_{i=1}^{l} \beta(i)} = 1$.
 On the other hand we also get
 \begin{align*}
m(x_1, \ldots, x_{l}, x_1, \ldots, x_{l}, \ldots, x_{1}, \ldots, x_{l}) 
 &= \prod_{i=1}^{k} (\prod_{j=1}^{l}x_j^{\beta(j)})^{\alpha(i)}
 = \left(\prod_{j=1}^{l}   x_j^{\beta(j)}\right)^{\sum_{i=1}^k \alpha(i)}\\
 &= \prod_{j=1}^{l} x_j^{\beta(j)\cdot  ( \sum_{i=1}^{k}\alpha(i))} \in \genMClo{\{m\}},
 \end{align*}
 and thus $m_2 = \prod_{j=1}^{l} x_j^{\beta(j)} \in \genMClo{\{m\}}$,
 since $\overline{\sum_{i=1}^{k} \alpha(i)} = 1$.
 This finishes the proof.
\end{proof}

As a consequence of Lemma~\ref{lem:getOneGen} we get the following corollary:
\begin{cor}\label{cor:finGenIdMonIsSingle}
Let $q$ be a prime power.
 A finitely generated idempotent monomial clone on $\F_q$
 is singly generated.
\end{cor}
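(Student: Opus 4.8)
The plan is to prove this by induction on the number of generators, with Lemma~\ref{lem:getOneGen} doing all the real work in the inductive step. First I would make two preliminary observations. If an idempotent monomial clone $C$ is finitely generated, say $C = \genMClo{\{m_1,\dots,m_n\}}$ for finitely many monomials $m_i$, then we may assume that each $m_i$ is an \emph{idempotent} monomial: every monomial lying in an idempotent monomial clone is idempotent, so in particular the generators are. Secondly, a monomial clone generated by finitely many idempotent monomials is again idempotent, i.e.\ contains only idempotent monomials. Indeed, projections are idempotent, a substitution $f(g_1,\dots,g_n)$ of idempotent monomials is idempotent, and a monomial equivalent to an idempotent monomial induces the same function and hence is idempotent as well; thus the closure generating $\genMClo{\{m_1,\dots,m_n\}}$ stays inside the idempotent monomials. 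This keeps every clone that arises during the induction inside the idempotent world, so that Lemma~\ref{lem:getOneGen} is applicable at each stage.

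The induction then runs as follows. For the base case $n \le 1$ there is nothing to do: $\genMClo{\{x_1\}}$ and $\genMClo{\{m_1\}}$ are singly generated. For the inductive step, suppose the statement holds for idempotent monomial clones generated by fewer than $n$ idempotent monomials, and let $C = \genMClo{\{m_1,\dots,m_n\}}$ with $n \ge 2$ and each $m_i$ an idempotent monomial. By the first preliminary observation the subclone $\genMClo{\{m_1,\dots,m_{n-1}\}}$ is an idempotent monomial clone generated by $n-1$ idempotent monomials, so by the induction hypothesis it equals $\genMClo{\{m'\}}$ for a single monomial $m'$, which is idempotent since it lies in an idempotent monomial clone. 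Consequently $C = \genMClo{\{m', m_n\}}$, where $m'$ and $m_n$ are both idempotent monomials. Now I would apply Lemma~\ref{lem:getOneGen} to the pair $m'$, $m_n$: it produces a single monomial $m$ (the appropriate composition of $m'$ and $m_n$) with $\genMClo{\{m', m_n\}} = \genMClo{\{m\}}$. Hence $C = \genMClo{\{m\}}$ is singly generated, which completes the induction and the proof.

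I do not expect a genuine obstacle here, since the corollary is essentially just the iteration of Lemma~\ref{lem:getOneGen}. The only points that require a little care are the bookkeeping ones noted above: ensuring that the finite generating set can be taken to consist of idempotent monomials, and that all intermediate clones appearing in the induction remain idempotent, so that the hypotheses of Lemma~\ref{lem:getOneGen} are met at every step.
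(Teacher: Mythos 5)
Your proof is correct and follows exactly the approach the paper intends: iterate Lemma~\ref{lem:getOneGen} by induction on the number of generators, with the (necessary) bookkeeping that the generators may be taken idempotent and that the intermediate clones stay idempotent, so Lemma~\ref{lem:getOneGen} applies at each step. The paper gives no explicit proof beyond citing Lemma~\ref{lem:getOneGen} as the source, and your argument is the natural way to fill that in.
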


\emph{Remark}: Corollary~\ref{cor:finGenIdMonIsSingle} does not hold in general for an arbitrary monomial clone,
as we can see from the case distinction for the field $\F_3$ in the proof of Proposition~\ref{pro:smallMonClon} where
the monomial clone $\genMClo{\{x_1^2,x_1x_2^2\}}$ cannot be generated by one element.

Now we use again the connection of monomial clones to semi-affine algebras to show that there
are only finitely many idempotent monomial clones on $\F_q$.

\begin{pro}\label{pro:idClFin}
 Let $q$ be a prime power.
 The lattice of idempotent monomial clones on $\F_q$ is finite.
\end{pro}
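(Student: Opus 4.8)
The plan is to transfer the question, through the map $\varphi$ of Proposition~\ref{pro:conSemiAff}, into a finiteness statement about idempotent $0$-preserving semi-affine clones on $\Z_{q-1}$, where I call such a clone \emph{idempotent} if every member $(y_1,\dots ,y_n)\mapsto \sum_{i=1}^n r_i y_i$ satisfies $\sum_{i=1}^n r_i\equiv 1$. By Lemma~\ref{lem:smIdClone} every idempotent monomial clone on $\F_q$ other than $\genMClo{\{x_1\}}$ contains $x_1x_2^{q-1}$; repeating the short argument from the proof of Proposition~\ref{pro:conSemiAff}\eqref{en:conSem2} (for $m=\prod_{i=1}^n x_i^{\alpha(i)}\in G$ one has $m\cdot x_{n+1}^{q-1}\in G$, and then $G$ is recovered from $\varphi(G)$ by Lemma~\ref{lem:q1Allq1} and variable identification), such a clone $G$ is the largest monomial clone with $\varphi$-image $\varphi(G)$ and hence is determined by $\varphi(G)$. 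Moreover $\varphi(G)$ is idempotent whenever $G$ is. So the number of idempotent monomial clones on $\F_q$ is at most $1$ plus the number of idempotent $0$-preserving semi-affine clones on $\Z_{q-1}$, and it suffices to bound the latter.

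The main step is to show that every idempotent $0$-preserving semi-affine clone $C$ on $\Z_{q-1}$ is generated by its operations of arity at most $q$. I would prove, by induction on $n$, that a member $f(y_1,\dots ,y_n)=\sum_{i=1}^n r_i y_i$ of $C$ with $n\ge q+1$ is a composition of members of $C$ of arity $\le n-1$. If some $r_i\equiv 0$, then $f$ does not depend on $y_i$ and is a composition of projections with the $(n-1)$-ary member of $C$ obtained by substituting $y_1$ for $y_i$. Otherwise, consider the $n$ partial sums $0,\ r_1,\ r_1+r_2,\ \dots ,\ r_1+\dots +r_{n-1}$; since $n>q-1$, two of them coincide, which yields a set $B\subseteq\{1,\dots ,n\}$ with $\sum_{i\in B}r_i\equiv 0$, and $|B|\ge 2$ because no $r_i\equiv 0$. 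If $|B|\le n-2$, then identifying the variables $y_i$ with $i\notin B$ to a single new variable turns $f$ into a member $h_B$ of $C$ of arity $|B|+1\le n-1$ of the shape $z+\sum_{i\in B}r_i y_i$ (the coefficients outside $B$ summing to $1$), identifying the variables $y_i$ with $i\in B$ turns $f$ into a member $f_0$ of $C$ of arity $\le n-1$ in which those variables are dead, and $f=h_B\big((y_i)_{i\in B};\,f_0\big)$. The only remaining possibility is $|B|=n-1$, i.e.\ $\sum_{i=1}^{n-1}r_i\equiv 0$ and $r_n\equiv 1$; then I would rerun the pigeonhole argument on the $n-1\ge q$ coefficients $r_1,\dots ,r_{n-1}$ (none $\equiv 0$, and with $n-1>q-1$) to obtain a zero-sum block $B'\subseteq\{1,\dots ,n-1\}$ with $2\le |B'|\le n-2$, and decompose $f$ along $B'$ exactly as above.

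Granting this, the induction shows that each idempotent $0$-preserving semi-affine clone on $\Z_{q-1}$ is determined by its intersection with the finite set of idempotent affine operations of arity $\le q$, so there are only finitely many such clones, and hence finitely many idempotent monomial clones on $\F_q$. I expect the only delicate point to be the bookkeeping in the inductive step: one must force the decomposition $f=h_B((y_i)_{i\in B};f_0)$ to drop the arity strictly in \emph{both} factors, which is precisely why the extreme block sizes $|B|\le 1$ and $|B|=n-1$ have to be excluded and the case $r_n\equiv 1$ treated by the secondary application of the pigeonhole principle.
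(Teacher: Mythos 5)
Your proposal is correct and the first half coincides with the paper's argument: both reduce the problem through $\varphi$ using Lemma~\ref{lem:smIdClone} together with the reasoning behind Items~(1) and~(2) of Proposition~\ref{pro:conSemiAff}, so that the count of idempotent monomial clones on $\F_q$ is $1$ plus the count of idempotent $0$-preserving semi-affine clones on $\Z_{q-1}$. Where you diverge is the finiteness of the latter: the paper simply cites \cite[Corollary 2.20]{S:CUA} (Szendrei's theorem that there are only finitely many idempotent semi-affine algebras on a fixed finite universe modulo term equivalence), whereas you reprove the needed special case from scratch by an arity-reduction argument. Your key step --- pigeonholing the $n$ partial sums $0, r_1, r_1+r_2,\dots,r_1+\cdots+r_{n-1}$ in $\Z_{q-1}$ to extract a zero-sum block $B$ of size $2\le|B|\le n-1$, decomposing $f$ along $B$ into two members of strictly smaller arity when $|B|\le n-2$, and handling the degenerate case $|B|=n-1$ (forcing $r_n\equiv 1$) by a second pigeonhole on the remaining $n-1\ge q$ coefficients --- is exactly why the threshold $n\ge q+1$ is needed and is the one delicate point, which you flag and handle correctly (in that case $|B'|\le n-2$ automatically). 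The trade-off: the paper's route is shorter and defers the combinatorial core to Szendrei, while yours is self-contained and makes the zero-sum/Davenport-constant flavour of the finiteness explicit; both are valid.
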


\begin{proof}
Let $\varphi$ be defined as in Section \ref{sec:conToSemi}.
Then $C$ is an idempotent monomial clone on $\F_q$
if and only if $\varphi(C)$ is a finite idempotent $0$-preserving semi-affine clone with respect to $(\Z_{q-1},+,-,0)$,
since $(x_1, \ldots, x_n) \mapsto \sum_{i=1}^n r(i)x_i$ is idempotent if and only if $\sum_{i=1}^n r(i) = 1$.
By Lemma~\ref{lem:smIdClone} we know that every idempotent monomial clone strictly above $\genMClo{\{x_1\}}$
contains $x_1 x_2^{q-1}$.
By Item \eqref{en:conSem1} and Item \eqref{en:conSem2} of Proposition~\ref{pro:conSemiAff}
we get that $\lvert [\genMClo{\{x_1\}}, \genMClo{\{x_1\cdots x_q\}}]\rvert =
\lvert [\varphi(\genMClo{\{x_1\}}), \varphi(\genMClo{\{x_1\cdots x_q\}})] \rvert + 1$.
The result follows now from \cite[Corollary 2.20]{S:CUA}, which states that modulo term equivalence there are only finitely many idempotent
semi-affine algebras on a fixed finite universe.
\end{proof}

\begin{cor}
Let $q$ be a prime power.
 Every idempotent monomial clone on $\F_q$ is singly generated.
\end{cor}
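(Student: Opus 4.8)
The plan is to combine the two immediately preceding results. By Proposition~\ref{pro:idClFin} the lattice of idempotent monomial clones on $\F_q$ is finite, so in particular every idempotent monomial clone on $\F_q$ has only finitely many members below it in the lattice, and hence is finitely generated: given an idempotent monomial clone $C$, pick for each of the finitely many idempotent monomial clones $D$ with $D\subseteq C$ and $D\neq C$ a monomial $m_D\in C\setminus D$ (if $C=\genMClo{\{x_1\}}$ it is generated by $x_1$), and note that the monomial clone generated by this finite set of monomials must equal $C$, since it is an idempotent monomial clone contained in $C$ that is not contained in any proper subclone of $C$.

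Having established that $C$ is finitely generated, I would then invoke Corollary~\ref{cor:finGenIdMonIsSingle}, which states precisely that a finitely generated idempotent monomial clone on $\F_q$ is singly generated. This yields that $C$ is principal, completing the proof.

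I do not expect any real obstacle here: the statement is an immediate corollary of Proposition~\ref{pro:idClFin} (finiteness of the idempotent sublattice, hence finite generation) together with Corollary~\ref{cor:finGenIdMonIsSingle} (finitely generated idempotent $\Rightarrow$ singly generated, which itself rests on the ``merging'' construction of Lemma~\ref{lem:getOneGen}). The only point that needs a sentence of care is the deduction ``finite lattice interval below $C$ $\Rightarrow$ $C$ finitely generated'': one must observe that a monomial clone is the union of the finitely generated subclones it contains, so if there were no finite generating set then there would be an infinite strictly ascending chain of finitely generated subclones of $C$, contradicting finiteness of $[\genMClo{\{x_1\}},C]$.

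\begin{proof}
By Proposition~\ref{pro:idClFin} the interval $[\genMClo{\{x_1\}}, \genMClo{\{x_1\cdots x_q\}}]$ of idempotent monomial clones on $\F_q$ is finite.
Let $C$ be an idempotent monomial clone on $\F_q$.
Every monomial clone is the union of its finitely generated subclones, so $C = \bigcup \{\genMClo{M} \mid M\subseteq C \text{ finite}\}$.
If $C$ were not finitely generated, then this union would have no largest member, and we could build an infinite strictly ascending chain $\genMClo{M_1} \subset \genMClo{M_2} \subset \cdots$ of finitely generated (hence idempotent) subclones of $C$, contradicting that $[\genMClo{\{x_1\}}, \genMClo{\{x_1\cdots x_q\}}]$ is finite.
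Hence $C$ is finitely generated, and by Corollary~\ref{cor:finGenIdMonIsSingle} it is singly generated.
\end{proof}
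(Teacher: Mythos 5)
Your proof is correct and follows essentially the same route as the paper: Proposition~\ref{pro:idClFin} (finiteness, hence no infinite ascending chains) yields that any idempotent monomial clone is finitely generated, and Corollary~\ref{cor:finGenIdMonIsSingle} then upgrades finite generation to single generation. You simply spell out the intermediate step (``finite lattice interval $\Rightarrow$ finitely generated'' via the union of finitely generated subclones) that the paper leaves implicit.
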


\begin{proof}
By Proposition~\ref{pro:idClFin} we get that there are no infinite ascending chains of monomial clones in $[\genMClo{\{x_1\}}, \genMClo{\{x_1\cdots x_q\}}]$,
and thus the result follows from Corollary~\ref{cor:finGenIdMonIsSingle}.
\end{proof}

  \section*{Acknowledgements}
The author gratefully thanks Erhard Aichinger and Stefano Fioravanti for many hours of fruitful discussions.
The author also thanks the anonymous referee for a very careful report
which led to corrections of some proofs and even of some statements 
and thus significantly improved the quality of the paper.

\providecommand{\bysame}{\leavevmode\hbox to3em{\hrulefill}\thinspace}
\providecommand{\MR}{\relax\ifhmode\unskip\space\fi MR }
\providecommand{\MRhref}[2]{
  \href{http://www.ams.org/mathscinet-getitem?mr=#1}{#2}
}
\providecommand{\href}[2]{#2}

\end{document}